\newcommand{\noun}[1]{\textsc{#1}}
\numberwithin{equation}{section} %% Comment out for sequentially-numbered
\numberwithin{figure}{section} %% Comment out for sequentially-numbered
\numberwithin{equation}{section}
\numberwithin{figure}{section}
\theoremstyle{plain}
\newtheorem{thm}{\protect\theoremname}[section]
  \theoremstyle{plain}
  \newtheorem{assumption}[thm]{\protect\assumptionname}
  \theoremstyle{remark}
  \newtheorem{rem}[thm]{\protect\remarkname}
  \theoremstyle{plain}
  \newtheorem{lem}[thm]{\protect\lemmaname}
  \theoremstyle{plain}
  \newtheorem{cor}[thm]{\protect\corollaryname}
  \theoremstyle{plain}
  \newtheorem{prop}[thm]{\protect\propositionname}
     \def\section{\@startsection{section}{1}%
     \z@{.7\linespacing\@plus\linespacing}{.5\linespacing}%
     {\bfseries%\normalfont\scshape
     \centering
     }}
     \def\@secnumfont{\bfseries}
\newcommand{\Rd}{\mathbb{R}^{d}}
\newcommand{\BN}{\mathbb{N}}
\newcommand{\BR}{\mathbb{R}}
\newcommand{\bfE}{\mathbf{E}}
  \providecommand{\corollaryname}{Corollary}
  \providecommand{\lemmaname}{Lemma}
  \providecommand{\propositionname}{Proposition}
  \providecommand{\remarkname}{Remark}
\providecommand{\theoremname}{Theorem}
\providecommand{\assumptionname}{Assumption}
  \providecommand{\corollaryname}{Corollary}
  \providecommand{\lemmaname}{Lemma}
  \providecommand{\propositionname}{Proposition}
  \providecommand{\remarkname}{Remark}
\providecommand{\theoremname}{Theorem}
  \providecommand{\corollaryname}{Corollary}
  \providecommand{\lemmaname}{Lemma}
  \providecommand{\propositionname}{Proposition}
  \providecommand{\remarkname}{Remark}
\providecommand{\theoremname}{Theorem}
  \providecommand{\corollaryname}{Corollary}
  \providecommand{\lemmaname}{Lemma}
  \providecommand{\propositionname}{Proposition}
  \providecommand{\remarkname}{Remark}
\providecommand{\theoremname}{Theorem}
  \providecommand{\assumptionname}{Assumption}
  \providecommand{\corollaryname}{Corollary}
  \providecommand{\lemmaname}{Lemma}
  \providecommand{\propositionname}{Proposition}
  \providecommand{\remarkname}{Remark}
\providecommand{\theoremname}{Theorem}
\begin{document}
\title[Heat kernel estimates for non-symmetric stable-like processes]{Heat kernel estimates for non-symmetric stable-like processes}
\author[P. Jin]{Peng Jin}

\address{Peng Jin: Fakultät für Mathematik und Naturwissenschaften, Bergische
Universität Wuppertal, 42119 Wuppertal, Germany}

\email{jin@uni-wuppertal.de}

\subjclass[2010]{primary 60J35, 47G20, 60J75}

\keywords{Stable-like process, heat kernel, integro-differential operator,  martingale problem, Levi's method}

\begin{abstract} Let $d\ge1$ and $0<\alpha<2$. Consider the integro-differential
operator
\begin{align*}
\mathcal{L}f(x) & =\int_{\mathbb{R}^{d}\backslash\{0\}}\left[f(x+h)-f(x)-\chi_{\alpha}(h)\nabla f(x)\cdot h\right]\frac{n(x,h)}{|h|^{d+\alpha}}\mathrm{d}h\\
 & \qquad+\mathbf{1}_{\alpha>1}b(x)\cdot\nabla f(x),
\end{align*}
where $\chi_{\alpha}(h):=\mathbf{1}_{\alpha>1}+\mathbf{1}_{\alpha=1}\mathbf{1}_{\{|h|\le1\}}$, $b:\mathbb{R}^{d}\to\mathbb{R}^{d}$ is bounded measurable, and $n:\mathbb{R}^{d}\times\mathbb{R}^{d}\to\mathbb{R}$ is measurable and bounded above and below respectively by two positive constants. Further, we assume that $n(x,h)$ is H\"older continuous in $x$, uniformly with respect to $h\in\mathbb{R}^{d}$. In the case $\alpha=1,$ we assume additionally $\int_{\partial B_{r}}n(x,h)h\mathrm{d}S_{r}(h)=0$, $\forall  r \in (0,\infty)$, where $\mathrm{d}S_{r}$ is the surface measure on $\partial B_{r}$, the boundary of the ball with radius $r$ and center $0$. In this paper, we establish two-sided estimates for the heat kernel of the Markov process associated with the operator $\mathcal{L}$. This extends a recent result of Z.-Q. Chen and X. Zhang. \end{abstract}

\maketitle

\section{Introduction\label{sec:Introduction}}

In probability theory, stable distributions play a very important
role. They appear naturally when one studies the limits of the sum
of suitably rescaled independent and identically distributed random
variables. A stable distribution is firstly characterized by an index
$\alpha\in(0,2]$, which is called the index of stability. Stable
distributions with index $\alpha=2$ are nothing but the Gaussian
ones, while those with index $\alpha\in(0,2)$ have heavy tails and
are particularly interesting for applications, see, e.g., \cite{nolan2016stable}.
One feature of stable distributions is their analytical tractability,
which is due to the simple form of their characteristic functions.
In particular, density estimates for stable distributions with index
$\alpha\in(0,2)$ were done in \cite{MR0270403} for the one-dimensional
case, and the higher dimensional analogues were obtained in \cite{MR0119247,MR1744782,MR2286060}.

A Lévy process whose distribution is $\alpha$-stable is called an
$\alpha$-stable process. Due to \cite{MR1744782,MR2286060}, density
estimates of $\alpha$-stable processes with $\alpha\in(0,2)$ have
been well-understood. Moreover, as shown in \cite{MR2794975,MR3089797,MR3357585},
many other Lévy processes, whose Lévy measure resembles that of an
$\alpha$-stable processes, possess similar or slightly different
density estimates.

Stable-like processes are extensions of stable processes and refer
to Markov processes that behave, at each point of the state space,
like a single stable process. In the literature there are different
definitions of these processes, see, e.g., \cite{MR958291,MR1744782,MR2008600,MR2095633,MR2508568}.
Symmetric stable-like processes can be defined through the corresponding
symmetric Dirichlet forms, as done in \cite{MR2008600}. Note that
sharp heat kernel estimates for symmetric stable-like processes have
been obtained in \cite{MR2008600}. Compared to the symmetric case,
non-symmetric stable-like processes are usually given as solutions
of the martingale problem for stable-like operators. Following \cite{MR2508568},
a stable-like operator $\mathcal{S}$ of order $\alpha\in(0,2)$ takes
the form
\begin{equation}
\mathcal{S}f(x)=\int_{\mathbb{R}^{d}\backslash\{0\}}\left[f(x+h)-f(x)-\mathbf{1}_{\alpha\ge1}\mathbf{1}_{\{|h|\le1\}}\nabla f(x)\cdot h\right]\frac{n(x,h)}{|h|^{d+\alpha}}\mathrm{d}h,\label{defi: S}
\end{equation}
where $f\in C_{b}^{2}(\Rd)$ and the function $n:\Rd\times\Rd\to\mathbb{R}$
are measurable and bounded above and below respectively by two positive
constants. The well-posedness of the martingale problem for $\mathcal{S}$
has been established in \cite{MR2508568,MR3145767,MR3201992,chen2016uniqueness}
under various conditions on $n(x,h)$. It is now known that the stable-like
process corresponding to $\mathcal{S}$ exhibits very similar probabilistic
and analytic properties to a rotationally symmetric $\alpha$-stable
process, see \cite{MR1918242,MR2555009,chen2015heat}; in particular,
its sharp heat kernel estimates have recently been derived in \cite{chen2015heat}
given that $n(x,h)$ is Hölder continuous in $x$ and symmetric in
$h$.

This paper is devoted to the heat kernel estimates of non-symmetric
stable-like processes. We will consider an integro-differential operator
that is more general than the stable-like operator $\mathcal{S}$
given in (\ref{defi: S}). Let $d\ge1$ and $0<\alpha<2$. Consider
the operator
\begin{equation}
\mathcal{L}f(x)=\int_{\mathbb{R}^{d}\backslash\{0\}}\left[f(x+h)-f(x)-\chi_{\alpha}(h)\nabla f(x)\cdot h\right]\frac{n(x,h)}{|h|^{d+\alpha}}\mathrm{d}h+\mathbf{1}_{\alpha>1}b(x)\cdot\nabla f(x),\label{defi: L}
\end{equation}
where $\chi_{\alpha}(h):=\mathbf{1}_{\alpha>1}+\mathbf{1}_{\alpha=1}\mathbf{1}_{\{|h|\le1\}}$,
the vector field $b:\Rd\to\Rd$ and the function $n:\Rd\times\Rd\to\mathbb{R}$
are measurable. Throughout this paper, we assume the following assumptions:
\begin{assumption}
The function $n$ satisfies $0<\kappa_{0}\le n(x,h)\le\kappa_{1}$
for all $x,h\in\Rd$, where $\kappa_{0}$ and $\kappa_{1}$ are constants.
Further, there exist constants $\theta\in(0,1)$ and $\kappa_{2}>0$
such that
\begin{equation}
|n(x,h)-n(y,h)|\le\kappa_{2}|x-y|^{\theta},\quad\forall x,y,h\in\Rd.\label{Condition:Holder}
\end{equation}
In the case $\alpha=1,$ we assume additionally
\begin{equation}
\int_{\partial B_{r}}n(x,h)h\mathrm{d}S_{r}(h)=0,\text{\quad\ensuremath{\forall}}r\text{\ensuremath{\in}}(0,\infty),\label{condition2fornxh_a=00003D00003D1}
\end{equation}
where $dS_{r}$ is the surface measure on $\partial B_{r}$, the boundary
of the ball with center $0$ and radius $r$. \label{ass1:the function n}
\end{assumption}
\begin{rem}
Note that we don't assume the symmetry of $n(x,h)$ in $h$, i.e.,
it is possible that $n(x,h)\neq n(x,-h)$ for some $x,h\in\Rd$.
\end{rem}
\begin{assumption}
There exists a constant $\kappa_{3}>0$ such that $|b(x)|\le\kappa_{3}$
for all $x\in\Rd.$\label{ass2:The-vector-field}
\end{assumption}
According to \cite[Proposition 3]{MR3145767}, the martingale problem
for $\mathcal{L}$ is well-posed under Assumptions \ref{ass1:the function n}
and \ref{ass2:The-vector-field}. In spite of the presence of the
drift term $b\cdot\nabla$ in $\mathcal{L}$, we still call the Markov
process associated with $\mathcal{L}$ a stable-like process. The
main result of this paper is as follows:
\begin{thm}
\label{thm: main}Suppose that the operator $\mathcal{L}$ defined
in \emph{(\ref{defi: L})} satisfies Assumptions \ref{ass1:the function n}
and \ref{ass2:The-vector-field}. Let $\left(X,\left(\mathbf{{L}}^{x}\right)\right)$
be the Markov process associated with $\mathcal{L}$, i.e., $\mathbf{{L}}^{x}$
is the unique solution to the martingale problem for $\mathcal{L}$
starting from $x\in\Rd$ and $X=(X_{t})$ is the canonical process
on $D\big([0,\infty);\Rd\big)$. Then $\left(X,\left(\mathbf{{L}}^{x}\right)\right)$
has a jointly continuous transition density $l(t,x,y)$ such that
$\mathbf{{L}}^{x}\left(X_{t}\in E\right)=\int_{E}l(t,x,y)\mathrm{d}y$
for all $t>0$, $x\in\Rd$ and $E\in\mathcal{B}(\Rd)$. Moreover,
for each $T>0,$ there exists a constant $C_{1}=C_{1}(d,\alpha,\kappa_{0},\kappa_{1},\kappa_{2},\theta,\kappa_{3},T)\in(1,\infty)$
such that
\[
C_{1}^{-1}\left(\frac{t}{|x-y|^{d+\alpha}}\wedge t^{-d/\alpha}\right)\le l(t,x,y)\le C_{1}\left(\frac{t}{|x-y|^{d+\alpha}}\wedge t^{-d/\alpha}\right)
\]
for all $x,y\in\Rd$ and $0<t\le T$. For the case $1<\alpha<2$,
there exists also a constant $C_{2}=C_{2}(d,\alpha,\kappa_{0},\kappa_{1},\kappa_{2},\theta,\kappa_{3},T)>1$
such that
\[
|\nabla_{x}l(t,x,y)|\le C_{2}t^{-1/\alpha}\left(\frac{t}{|x-y|^{d+\alpha}}\wedge t^{-d/\alpha}\right),\quad\forall x,y\in\Rd,\ t\in(0,T].
\]
\end{thm}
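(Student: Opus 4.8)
\section*{Proof proposal}

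The plan is to construct $l(t,x,y)$ by Levi's parametrix (frozen-coefficient) method and then to identify it with the transition density by appealing to the well-posedness of the martingale problem for $\mathcal{L}$. The building block is the heat kernel of the \emph{frozen} operator: for $z\in\Rd$ let $\mathcal{L}^{z}$ be obtained from $\mathcal{L}$ by freezing the jump intensity at $z$ and deleting the drift,
\[
\mathcal{L}^{z}f(x)=\int_{\Rd\setminus\{0\}}\bigl[f(x+h)-f(x)-\chi_{\alpha}(h)\nabla f(x)\cdot h\bigr]\frac{n(z,h)}{|h|^{d+\alpha}}\,\mathrm{d}h .
\]
Then $\mathcal{L}^{z}$ generates a L\'evy process whose L\'evy density lies between two fixed multiples of $|h|^{-d-\alpha}$; in the borderline case $\alpha=1$ the extra hypothesis (\ref{condition2fornxh_a=00003D00003D1}) is exactly what guarantees that no spurious drift is hidden in $\mathcal{L}^{z}$ (equivalently, that the value of the compensated integral is independent of the truncation radius), so the process is genuinely $1$-stable-like. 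Hence $\mathcal{L}^{z}$ has a smooth transition density $p^{z}(t,x)$ with $\partial_{t}p^{z}=\mathcal{L}^{z}_{x}p^{z}$ satisfying, uniformly in $z$, the two-sided bound $p^{z}(t,x)\asymp\varrho(t,x):=t^{-d/\alpha}\wedge\frac{t}{|x|^{d+\alpha}}$, the derivative bounds $|\nabla_{x}^{j}p^{z}(t,x)|\lesssim t^{-j/\alpha}\varrho(t,x)$ for $j=1,2$, and the H\"older-in-$z$ estimates $|p^{z}(t,x)-p^{z'}(t,x)|\lesssim(|z-z'|^{\theta}\wedge1)\varrho(t,x)$ and $|\nabla_{x}p^{z}(t,x)-\nabla_{x}p^{z'}(t,x)|\lesssim(|z-z'|^{\theta}\wedge1)t^{-1/\alpha}\varrho(t,x)$ (the last two inherited from (\ref{Condition:Holder})). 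Establishing these frozen estimates by Fourier analysis of the characteristic exponent $\psi^{z}(\xi)$ --- especially the case $\alpha=1$ --- is the analytic backbone of the proof.

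Next put $p_{0}(t,x,y):=p^{y}(t,x-y)$ and define the error term $\Psi(t,x,y):=(\mathcal{L}_{x}-\mathcal{L}^{y}_{x})p_{0}(t,\cdot,y)(x)$, so that
\[
\Psi(t,x,y)=\int_{\Rd\setminus\{0\}}\bigl[p^{y}(t,x+h-y)-p^{y}(t,x-y)-\chi_{\alpha}(h)\nabla_{x}p^{y}(t,x-y)\cdot h\bigr]\frac{n(x,h)-n(y,h)}{|h|^{d+\alpha}}\,\mathrm{d}h+\mathbf{1}_{\alpha>1}\,b(x)\cdot\nabla_{x}p^{y}(t,x-y).
\]
Using (\ref{Condition:Holder}) (together with the trivial bound $|n(x,h)-n(y,h)|\le 2\kappa_{1}$ in the far region) and the derivative bounds for the integral term, and Assumption \ref{ass2:The-vector-field} and the gradient bound for the drift term, one shows that $\Psi$ obeys an estimate of $\varrho$-type with a genuine time gain: schematically $|\Psi(t,x,y)|\lesssim t^{-1+\gamma}\varrho(t,x-y)$, with
\[
\gamma:=\begin{cases}\theta/\alpha,&\alpha\le1,\\[2pt]\min(\theta,\alpha-1)/\alpha,&\alpha>1,\end{cases}
\]
so that $\gamma>0$ in every case. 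To run the iteration one actually works with a scale of reference functions $\{\varrho_{\beta}\}_{0<\beta\le\alpha}$ --- designed to capture both the near-diagonal singularity and the polynomial decay at infinity --- that is stable under the space--time convolution $(F\boxtimes G)(t,x,y)=\int_{0}^{t}\!\!\int_{\Rd}F(t-s,x,z)G(s,z,y)\,\mathrm{d}z\,\mathrm{d}s$. Setting $\Phi:=\sum_{k\ge1}\Psi^{\boxtimes k}$, the gain $\gamma$ at each convolution together with a Beta-function estimate yields $|\Psi^{\boxtimes k}(t,x,y)|\lesssim C^{k}\Gamma(k\gamma)^{-1}t^{-1+k\gamma}\varrho(t,x-y)$ (with $\varrho$ replaced by the appropriate member of $\{\varrho_{\beta}\}$), so the series converges, $\Phi$ solves the integral equation $\Phi=\Psi+\Psi\boxtimes\Phi$ (which is precisely what forces $l$, defined below, to satisfy $\partial_{t}l=\mathcal{L}_{x}l$), and $|\Phi(t,x,y)|\lesssim t^{-1+\gamma}\varrho(t,x-y)$ for $t\in(0,T]$. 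I expect the sharp estimate on $\Psi$ and the convolution calculus for the family $\{\varrho_{\beta}\}$ to be the main obstacle: one must accommodate the non-symmetry of $n(x,h)$ in $h$ and the merely bounded drift $b$ without losing the gain $\gamma$, and the family must be rich enough that every convolution occurring in the scheme --- including the ones with $\nabla_{x}p_{0}$ needed for the gradient estimate --- stays integrable in $(s,z)$ and reassembles to the clean two-sided bound.

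Now set $l(t,x,y):=p_{0}(t,x,y)+(p_{0}\boxtimes\Phi)(t,x,y)$. The bounds above give joint continuity of $l$ in $(t,x,y)$ and, for $0<t\le T$, the upper bound $l(t,x,y)\lesssim\varrho(t,x-y)$; moreover the correction $(p_{0}\boxtimes\Phi)(t,x,y)$ is $O\bigl(t^{\gamma}\varrho(t,x-y)\bigr)$ near the diagonal, which yields $l(t,x,y)\gtrsim\varrho(t,x-y)$ for all sufficiently small $t$. The lower bound for $t$ in the remaining range $[t_{0},T]$ follows by the standard two-step argument: an on-diagonal estimate $l(t,x,y)\gtrsim t^{-d/\alpha}$ for $|x-y|\lesssim t^{1/\alpha}$, obtained by a doubling induction from the small-time bound via the Chapman--Kolmogorov identity, and an off-diagonal ``one big jump'' estimate $l(t,x,y)\gtrsim t|x-y|^{-d-\alpha}$ for $|x-y|\gtrsim t^{1/\alpha}$, obtained from the L\'evy system of $X$ using $n(x,h)/|h|^{d+\alpha}\gtrsim\kappa_{0}|h|^{-d-\alpha}$. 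Finally, a direct computation (justified by the above estimates) shows $\partial_{t}l=\mathcal{L}_{x}l$ with $l(t,\cdot,y)\to\delta_{y}$ as $t\downarrow0$, together with $l\ge0$ and $\int_{\Rd}l(t,x,y)\,\mathrm{d}y=1$; then, for $f\in C_{c}^{\infty}(\Rd)$, the function $w(s,x):=\int_{\Rd}l(t-s,x,y)f(y)\,\mathrm{d}y$ solves $\partial_{s}w+\mathcal{L}_{x}w=0$ on $[0,t]\times\Rd$ with $w(t,\cdot)=f$, so Dynkin's formula under the unique martingale-problem solution $\mathbf{L}^{x}$ gives that $\int_{\Rd}l(t,x,y)f(y)\,\mathrm{d}y$ equals the $\mathbf{L}^{x}$-expectation of $f(X_{t})$; combined with the well-posedness of the martingale problem for $\mathcal{L}$ (\cite[Proposition 3]{MR3145767}) this identifies $l$ as the transition density of $\bigl(X,(\mathbf{L}^{x})\bigr)$.

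For $1<\alpha<2$ the gradient bound is obtained by differentiating $l=p_{0}+p_{0}\boxtimes\Phi$ in $x$. The term $\nabla_{x}p_{0}(t,x,y)=\nabla_{x}p^{y}(t,x-y)$ satisfies $|\nabla_{x}p^{y}(t,x-y)|\lesssim t^{-1/\alpha}\varrho(t,x-y)$ by the frozen estimates; for the convolution term one moves $\nabla_{x}$ onto $p^{z}(t-s,x-z)$, which is legitimate precisely because $\int_{0}^{t}(t-s)^{-1/\alpha}s^{-1+\gamma}\,\mathrm{d}s<\infty$ when $\alpha>1$, and this produces a bound $\lesssim t^{\gamma-1/\alpha}\varrho(t,x-y)\lesssim t^{-1/\alpha}\varrho(t,x-y)$ for $t\le T$. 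Hence $|\nabla_{x}l(t,x,y)|\lesssim t^{-1/\alpha}\varrho(t,x-y)$ on $(0,T]$, which is the asserted estimate.
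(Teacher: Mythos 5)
Your plan is the Levi parametrix method that the paper also uses, but with two differences worth flagging, one of which is a genuine gap.

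First, the structural difference: you run a single parametrix iteration with an error term $\Psi$ that carries both the H\"older perturbation of the jump kernel and the drift $b\cdot\nabla$. The paper instead does this in two stages: Section 4 builds the heat kernel $p(t,x,y)$ of the pure-jump operator $\mathcal{A}$ (no drift) by the parametrix method; Section 5 then perturbs $p$ by the drift via a Duhamel/resolvent iteration $l=\sum_n l_n$ and identifies $l$ as the density of the $\mathcal{L}$-martingale-problem solution through a Laplace-transform argument. Your single-pass decomposition should work if carried out carefully (the gain at each step is $\ge\min(\hat{\theta},\,\alpha-1)/\alpha$ with $\hat{\theta}=\theta\wedge(\alpha/4)$, so the $\Gamma$-function estimates close), but the mixing of the two perturbation types inside one iteration demands more bookkeeping; the paper's two-step split isolates the two perturbations and reuses, in the second step, existing PDE/resolvent theory for $\mathcal{A}$ from \cite{MR2555009}.

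Second, and more substantially: you write that the uniform two-sided bound $p^{z}(t,x)\asymp\varrho(t,x)$ for the frozen kernel, together with the $z$-H\"older estimates, is to be obtained ``by Fourier analysis of the characteristic exponent $\psi^{z}(\xi)$''. This step cannot be dispatched that way, and the paper explicitly identifies it as the first of its two main difficulties. The Lévy density $n(z,h)|h|^{-d-\alpha}$ is merely measurable and bounded above and below --- there is no smoothness in $h$ --- so the standard Fourier arguments (repeated integration by parts in $\xi$, or writing the density as an inverse Fourier transform of a well-controlled symbol) give an upper bound but do not give a matching lower bound. The paper proves the upper bound for the frozen density by invoking \cite[Theorem 1]{MR2794975} plus scaling (Lemmas \ref{MPAFLlemma1} and \ref{lem: upper esti for f_t}), and proves the lower bound by a probabilistic route: it factors $f_{t}=g_{t}*\tilde{f}_{t}$ with $g_{t}$ a rotationally symmetric $\alpha$-stable density and $\tilde{f}_{t}$ the density of a residual stable-like Lévy process, controls $\mathbf{P}(\tilde{\tau}_{B_{r}}\le t)$ by the exit-time Lemma \ref{lem: esti exit time of a ball} (which is itself a Bass--Kassmann-style argument, not Fourier), and then integrates (Lemma \ref{lem: lower esti for f_t}). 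Without this piece the parametrix has no positive leading term to compare the correction against, so the on-diagonal lower bound for $l$ is unsupported. Relatedly, your proposal makes no mention of the logarithmic factor $1+\ln(t^{-1})$ that appears for $\alpha=1$ (Lemmas \ref{lem:Assume ln} and \ref{Lemma 2: delta}), which is the paper's second main difficulty and arises precisely because $n(x,h)$ is not assumed symmetric in $h$; your asserted clean bound $|\Psi(t,x,y)|\lesssim t^{-1+\gamma}\varrho(t,x-y)$ for $\alpha=1$ must be weakened by this log, and one has to check, as the paper does around \eqref{bound for AV on =00005Be,1=00005D}, that the log is harmless for the convergence of the iteration and for the equation $\partial_{t}l=\mathcal{L}l$.

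The remaining ingredients of your outline --- the identification with the martingale-problem solution via Dynkin's formula, the on-diagonal small-time lower bound from $p_{0}\gtrsim\varrho$ plus the smallness of the correction near the diagonal, the Chapman--Kolmogorov doubling to all $t\le T$, the off-diagonal lower bound from the L\'evy system and $n\ge\kappa_{0}$, and the gradient bound for $\alpha>1$ by differentiating $p_{0}\boxtimes\Phi$ --- match the paper's Sections 4--6 and are sound as sketched.
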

To prove Theorem \ref{thm: main}, we will use the same approach as
in \cite{chen2015heat}, namely, we will apply the parametrix method
of Levi. However, we have to overcome two main difficulties. The first
one is, surprisingly, that sharp two-sided density estimates for a
jump-type Lévy process with Lévy measure $K(h)|h|^{-d-\alpha}\mathrm{d}h$,
where $K(\cdot)$ is bounded from above and below by two positive
constants, are not completely known. To solve this problem, we will
start with the upper bounds derived in \cite{MR2794975}, then use
the rescaling argument in \cite[Proposition 2.2]{MR1918242} and some
ideas from \cite{MR2508568} and \cite{MR2095633}. The second difficulty
is due to the fact that $n(x,h)$ is not symmetric in $h$, which
makes some rescaling arguments in \cite{chen2015heat} fail to work.
As a result, in the case $\alpha=1$, we obtain some estimates that
are weaker than those in \cite{chen2015heat} (see, e.g., Lemma \ref{lem:Assume ln}
below and \cite[Theorem 2.4]{chen2015heat}). However, these weaker
forms of estimates don't essentially effect the proof of Theorem \ref{thm: main}.

The rest of the paper is organized as follows. After a short section
on preliminaries, in Section 3 we derive the two-sided density estimates
for jump-type Lévy processes, whose Lévy measure is comparable to
that of a rotationally symmetric $\alpha$-stable process. In Section
4 we construct the transition density of $\left(X,\left(\mathbf{{L}}^{x}\right)\right)$,
with the additional assumption that the drift $b$ in $\mathcal{L}$
is identically $0$. In Section 5 we treat the case where $1<\alpha<2$
and the drift term $b\cdot\nabla$ in $\mathcal{L}$ is present. Section
6 is devoted to the proof of Theorem \ref{thm: main}.

Finally, we give a few remarks on the notation for the constants appearing
in the statements or proofs of the results. The letter $c$ with subscripts
will only appear in proofs and denote positive constants whose exact
value is unimportant. The labeling of the constants $c_{1}$, $c_{2}$,
$...$ starts anew in the proof of each result. We write $C(d,\alpha,...)$
for a positive constant $C$ that depends only on the parameters $d,\alpha,....$

\section{Preliminaries}

\subsection{Notation }

The inner product of $x$ and $y$ in $\mathbb{R}^{d}$ is written
as $x\cdot y$. We use $|v|$ to denote the Euclidean norm of a vector
$v\in\BR^{m}$, $m\in\BN$. We use $B_{r}(x)$ for the open ball of
radius $r$ with center $x$ and simply write $B_{r}$ for $B_{r}(0)$.
The boundary of $B_{r}(x)$ is denoted by $\partial B_{r}(x)$.

For a bounded function $g$ on $\Rd$ we write $\|g\|:=\sup_{x\in\Rd}|g(x)|$.
Let $C_{b}^{2}(\mathbb{R}^{d})$ denote the class of $C^{2}$ functions
such that the function and its first and second order partial derivatives
are bounded.

Let $D=D\big([0,\infty);\Rd\big)$, the set of paths in $\mathbb{R}^{d}$
that are right continuous with left limits, be endowed with the Skorokhod
topology. Set $X_{t}(\omega)=\omega(t)$ for $\omega\in D$ and let
$\mathcal{D}=\sigma(X_{t}:0\le t<\infty)$ and $\mathcal{F}_{t}:=\sigma(X_{r}:0\le r\le t)$.
A probability measure $\mathbf{P}$ on $(D,\mathcal{D})$ is called
a solution to the martingale problem for $\mathcal{L}$ starting from
$x\in\Rd$, if $\mathbf{P}(X_{0}=x)=1$ and under the measure $\mathbf{P}$,
$f(X_{t})-\int_{0}^{t}\mathcal{L}f(X_{u})\mathrm{d}u,\ t\ge0,$ is
an $\mathcal{F}_{t}$-martingale for all $f\in C_{b}^{2}(\mathbb{R}^{d})$.

\subsection{\noun{Rescaling} }

Instead of $\mathcal{L},$ we first consider the operator
\begin{equation}
\mathcal{A}f(x):=\int_{\mathbb{R}^{d}\backslash\{0\}}\left[f(x+h)-f(x)-\chi_{\alpha}(h)h\cdot\nabla f(x)\right]\frac{n(x,h)}{|h|^{d+\alpha}}\mathrm{d}h.\label{defi: A}
\end{equation}
It turns out that the the Markov process associated with $\mathcal{A}$
has the following rescaling property, which is analog to \cite[Proposition 2.2]{MR1918242}.
\begin{lem}
\label{claim:(Rescaling)} Consider the operator $\mathcal{A}$ defined
in \emph{(\ref{defi: A})} with $n(\cdot,\cdot)$ satisfying Assumption
\ref{ass1:the function n}. Let $\left(X,\left(\mathbf{{P}}^{x}\right)\right)$
be the Markov process associated with the operator $\mathcal{A}$,
i.e., $\mathbf{{P}}^{x}$ is the unique solution to the martingale
problem for $\mathcal{A}$ starting from $x\in\Rd$ and $X=(X_{t})$
is the canonical process on $D\big([0,\infty);\Rd\big)$. Let $a>0$.
Define $\mathbf{\tilde{P}}^{x}=\mathbf{P}^{x/a}$ and $Y_{t}:=aX_{a^{-\alpha}t}$,
$t\ge0.$ Then $\mathbf{{\tilde{P}}}^{x}(Y_{0}=x)=1$ and $f(Y_{t})-\int_{0}^{t}\mathcal{\tilde{A}}f(Y_{u})\mathrm{d}u,\ t\ge0,$
is a $\mathbf{\tilde{P}}^{x}$-martingale for all $f\in C_{b}^{2}(\mathbb{R}^{d})$,
where
\begin{align*}
\tilde{\mathcal{A}}f(x) & :=\int_{\mathbb{R}^{d}\backslash\{0\}}\left[f(x+h)-f(x)-\chi_{\alpha}(h)h\cdot\nabla f(x)\right]\frac{\tilde{n}(x,h)}{|h|^{d+\alpha}}\mathrm{d}h
\end{align*}
with $\tilde{n}(x,h):=n(x/a,h/a)$.
\end{lem}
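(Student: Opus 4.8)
The plan is to deduce the martingale property for $Y$ from the one for $X$ by a deterministic spatial rescaling together with a deterministic time change. First, since $\tilde{\mathbf{P}}^{x}=\mathbf{P}^{x/a}$ we have $X_{0}=x/a$ $\tilde{\mathbf{P}}^{x}$-a.s., hence $Y_{0}=aX_{0}=x$ $\tilde{\mathbf{P}}^{x}$-a.s.

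Fix $f\in C_{b}^{2}(\mathbb{R}^{d})$ and put $g(y):=f(ay)$, so that $g\in C_{b}^{2}(\mathbb{R}^{d})$ and, since $n\le\kappa_{1}$, also $\mathcal{A}g\in C_{b}(\mathbb{R}^{d})$. The heart of the argument is the scaling identity
\[
\mathcal{A}g(z)=a^{\alpha}\,\tilde{\mathcal{A}}f(az),\qquad z\in\mathbb{R}^{d}.
\]
To establish it I would change variables $h\mapsto h/a$ in the integral defining $\mathcal{A}g(z)$, using $g(z+h)=f(az+ah)$, $\nabla g(z)=a\nabla f(az)$, the fact that $|h|^{-d-\alpha}\mathrm{d}h$ contributes a factor $a^{\alpha}$, and $n(z,h/a)=\tilde{n}(az,h)$; absolute convergence of the pieces, which justifies splitting the integrand, follows from $f\in C_{b}^{2}$ together with $n\le\kappa_{1}$. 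The only subtle point is the cutoff $\chi_{\alpha}$: after the substitution it is evaluated at $h/a$, which for $\alpha\neq1$ is scale-invariant and causes no trouble, but for $\alpha=1$ it becomes a truncation at radius $a$ instead of radius $1$. To reconcile this with the definition of $\tilde{\mathcal{A}}$, I would invoke the centering condition \eqref{condition2fornxh_a=00003D00003D1}: it passes from $n$ to $\tilde{n}$ by the corresponding rescaling of the surface measure on $\partial B_{r}$, and then, writing the difference of the two truncation terms in polar coordinates, the centering of $\tilde{n}$ on spheres makes this difference vanish, so the value of the operator is unaffected by the change of truncation radius.

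Granted the identity, I would finish as follows. Because $\mathbf{P}^{x/a}$ solves the martingale problem for $\mathcal{A}$ from $x/a$, the process $M_{t}:=g(X_{t})-\int_{0}^{t}\mathcal{A}g(X_{s})\,\mathrm{d}s$ is an $(\mathcal{F}_{t})$-martingale under $\tilde{\mathbf{P}}^{x}$. Since $t\mapsto a^{-\alpha}t$ is deterministic, continuous and increasing, $t\mapsto M_{a^{-\alpha}t}$ is a martingale for the filtration $(\mathcal{F}_{a^{-\alpha}t})_{t\ge0}$. Now $M_{a^{-\alpha}t}=f(Y_{t})-\int_{0}^{a^{-\alpha}t}\mathcal{A}g(X_{s})\,\mathrm{d}s$, and the substitution $s=a^{-\alpha}u$ combined with $X_{a^{-\alpha}u}=Y_{u}/a$ and the scaling identity gives $\int_{0}^{a^{-\alpha}t}\mathcal{A}g(X_{s})\,\mathrm{d}s=a^{-\alpha}\int_{0}^{t}\mathcal{A}g(Y_{u}/a)\,\mathrm{d}u=\int_{0}^{t}\tilde{\mathcal{A}}f(Y_{u})\,\mathrm{d}u$. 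Hence $f(Y_{t})-\int_{0}^{t}\tilde{\mathcal{A}}f(Y_{u})\,\mathrm{d}u$ is an $(\mathcal{F}_{a^{-\alpha}t})$-martingale under $\tilde{\mathbf{P}}^{x}$, and since $Y_{t}$ is $\mathcal{F}_{a^{-\alpha}t}$-measurable it is a fortiori a martingale for the natural filtration of $Y$, which is the assertion. The main obstacle is purely the $\alpha=1$ bookkeeping for the truncation described above; all remaining steps are routine changes of variables and the elementary time-change property of martingales.
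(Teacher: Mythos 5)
Your proof is correct and takes essentially the same route as the paper, which simply cites the rescaling argument of Bass--Levin (\cite[Proposition 2.2]{MR1918242}) and notes that the centering condition \eqref{condition2fornxh_a=00003D00003D1} makes it go through for $\alpha=1$. You have unpacked exactly that: the change of variables in the jump integral, the deterministic time change, and the key observation that the mismatch in the truncation radius when $\alpha=1$ is a spherical average of $\tilde{n}(\cdot,h)h$, which vanishes because the centering condition is inherited by $\tilde{n}$ under the dilation of the surface measure.
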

\begin{proof}
In view of (\ref{condition2fornxh_a=00003D00003D1}), the proof of
\cite[Proposition 2.2]{MR1918242} works also here without any changes.
\end{proof}
\begin{rem}
\label{rem: invariance of holder}In Lemma \ref{claim:(Rescaling)},
after the transformation $\tilde{n}(x,h)=n(x/a,h/a)$, we have
\[
|\tilde{n}(x,h)-\tilde{n}(y,h)|=\left|n\left(\frac{x}{a},\frac{h}{a}\right)-n\left(\frac{y}{a},\frac{h}{a}\right)\right|\le\kappa_{2}\left|\frac{x}{a}-\frac{y}{a}\right|^{\theta}=\kappa_{2}a^{-\theta}|x-y|^{\theta}
\]
for all $x,y$ and $h\in\Rd$.
\end{rem}

\subsection{Estimate of the first exit time from a ball}
\begin{lem}
\label{lem: esti exit time of a ball}Let $\mathcal{A}$ and $\left(X,\left(\mathbf{{P}}^{x}\right)\right)$
be as in Lemma \ref{claim:(Rescaling)}. Then there exists a constant
$C_{3}>0$ not depending on $x$ such that for all $r>0$ and $t>0$,
\[
\mathbf{\mathbf{{P}}}^{x}\left(\tau_{B_{r}(x)}\le t\right)\le C_{3}tr{}^{-\alpha},
\]
where $\tau_{B_{r}(x)}:=\inf\left\{ t\ge0:\ X_{t}\notin B_{r}(x)\right\} $.
\end{lem}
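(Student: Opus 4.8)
The plan is to use the rescaling of Lemma~\ref{claim:(Rescaling)} to reduce to balls of radius $1$, and then to read the estimate off from the martingale property applied to a fixed bump function. Fix $x_{0}\in\Rd$, $r>0$ and $t>0$, and apply Lemma~\ref{claim:(Rescaling)} with $a=1/r$. Under $\tilde{\mathbf{P}}^{x}:=\mathbf{P}^{x/a}$ the process $Y_{s}:=aX_{a^{-\alpha}s}=r^{-1}X_{r^{\alpha}s}$ solves the martingale problem for $\tilde{\mathcal{A}}$ with coefficient $\tilde n(y,h)=n(y/a,h/a)=n(ry,rh)$; this still obeys $\kappa_{0}\le\tilde n\le\kappa_{1}$ and, when $\alpha=1$, still satisfies (\ref{condition2fornxh_a=00003D00003D1}) (after the substitution $h\mapsto h/a$ in the surface integral). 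Since $Y_{s}\notin B_{1}(x)\iff X_{r^{\alpha}s}\notin B_{r}(rx)$, a change of the time variable gives $\tau^{Y}_{B_{1}(x)}=r^{-\alpha}\tau^{X}_{B_{r}(rx)}$, and taking $x=x_{0}/r$ yields the exact identity
\[
\mathbf{P}^{x_{0}}\!\big(\tau_{B_{r}(x_{0})}\le t\big)=\tilde{\mathbf{P}}^{x_{0}/r}\!\big(\tau^{Y}_{B_{1}(x_{0}/r)}\le r^{-\alpha}t\big).
\]
Since $\tilde n$ again satisfies Assumption~\ref{ass1:the function n}, it therefore suffices to prove $\mathbf{P}^{y}\!\big(\tau_{B_{1}(y)}\le s\big)\le C_{3}\,s$ for all $y\in\Rd$ and $s>0$, with $C_{3}$ depending only on $d,\alpha,\kappa_{1}$; applying this (with $s=r^{-\alpha}t$) to the process $Y$ then gives the lemma, the constant $C_{3}$ being in particular independent of $x_{0}$.

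For the unit-ball bound, fix $y$ and choose once and for all $\eta\in C_{c}^{\infty}(\Rd)$ with $0\le\eta\le1$, $\eta(0)=1$ and $\operatorname{supp}\eta\subset B_{1}(0)$, and put $\phi(z):=1-\eta(z-y)$, so that $\phi\in C_{b}^{2}(\Rd)$, $0\le\phi\le1$, $\phi(y)=0$ and $\phi\equiv1$ on $\{\,|z-y|\ge1\,\}$. Since $\phi\in C_{b}^{2}$, the process $M_{s}:=\phi(X_{s})-\int_{0}^{s}\mathcal{A}\phi(X_{u})\,\mathrm{d}u$ is a $\mathbf{P}^{y}$-martingale, bounded on $[0,s]$ because $\mathcal{A}\phi$ is a bounded function; optional stopping at $s\wedge\tau_{B_{1}(y)}$, combined with $\phi(X_{0})=\phi(y)=0$ and with the fact that a right-continuous path leaving the open ball $B_{1}(y)$ lies in $\{\,|z-y|\ge1\,\}$ at the exit time (so $\phi=1$ there), gives
\[
\mathbf{P}^{y}\!\big(\tau_{B_{1}(y)}\le s\big)\le\mathbf{E}^{y}\big[\phi(X_{s\wedge\tau_{B_{1}(y)}})\big]=\mathbf{E}^{y}\!\Big[\int_{0}^{s\wedge\tau_{B_{1}(y)}}\!\!\mathcal{A}\phi(X_{u})\,\mathrm{d}u\Big]\le s\,\sup_{z\in\Rd}\big|\mathcal{A}\phi(z)\big|.
\]
It remains to bound $\sup_{z}|\mathcal{A}\phi(z)|$, which I would do by splitting the integral defining $\mathcal{A}\phi(z)$ into $\{|h|\le1\}$ and $\{|h|>1\}$ and using $0<n\le\kappa_{1}$. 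On $\{|h|\le1\}$: if $\alpha\ge1$ the compensator $\chi_{\alpha}(h)h\cdot\nabla\phi(z)$ equals $h\cdot\nabla\phi(z)$, and the second-order Taylor bound $|\phi(z+h)-\phi(z)-h\cdot\nabla\phi(z)|\le\tfrac12\|\nabla^{2}\phi\|\,|h|^{2}$ is integrable against $|h|^{-d-\alpha}\,\mathrm{d}h$ near the origin because $\alpha<2$; if $\alpha<1$ the compensator vanishes and $|\phi(z+h)-\phi(z)|\le\|\nabla\phi\|\,|h|$ is integrable there because $\alpha<1$. On $\{|h|>1\}$: $|\phi(z+h)-\phi(z)|\le1$ is integrable against $|h|^{-d-\alpha}\,\mathrm{d}h$ at infinity for every $\alpha>0$, and — only when $\alpha>1$ — the extra term obeys $|h\cdot\nabla\phi(z)|\le\|\nabla\phi\|\,|h|$, whose integral over $\{|h|>1\}$ against $|h|^{-d-\alpha}\,\mathrm{d}h$ converges precisely because $\alpha>1$. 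Each of these integrals is at most $\kappa_{1}$ times a quantity depending only on $d,\alpha$ and the fixed function $\eta$, so $\sup_{z}|\mathcal{A}\phi(z)|\le C_{3}(d,\alpha,\kappa_{1},\eta)$, which completes the unit-ball estimate.

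The one place that requires genuine care is $\alpha=1$: there the truncation $\chi_{1}(h)=\mathbf{1}_{\{|h|\le1\}}$ is \emph{not} scale-invariant, so at a general radius $r\ne1$ the compensator is present on only part of the annulus $\{\,\min(1,r)<|h|\le\max(1,r)\,\}$, and the crude estimates across that shell yield an unwanted logarithmic factor $|\log r|$; this is exactly why Assumption~\ref{ass1:the function n} includes (\ref{condition2fornxh_a=00003D00003D1}). That hypothesis is precisely what makes the rescaling of Lemma~\ref{claim:(Rescaling)} legitimate — the spherical averages of $n(x,h)h$ vanish — which is what permits the reduction to $r=1$, where the two radii coincide, the troublesome shell is empty, and the straightforward splitting above works without any cancellation of $n$. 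For $\alpha\ne1$ the operator scales genuinely and this difficulty does not occur; accordingly, I expect the only real obstacle to be the $\alpha=1$ case, handled entirely by invoking (\ref{condition2fornxh_a=00003D00003D1}) through Lemma~\ref{claim:(Rescaling)}. Combining the two displayed inequalities gives $\mathbf{P}^{x_{0}}(\tau_{B_{r}(x_{0})}\le t)\le C_{3}\,r^{-\alpha}t$ with $C_{3}$ independent of $x_{0}$, as claimed.
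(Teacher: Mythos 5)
Your proof is correct, and it uses the same core mechanism as the paper: apply optional stopping to the martingale $\phi(X_{s\wedge\tau})-\int_{0}^{s\wedge\tau}\mathcal{A}\phi(X_u)\,\mathrm{d}u$ for a test function that vanishes at the center of the ball and equals $1$ outside it, then bound $\sup|\mathcal{A}\phi|$. The difference is where the scaling is placed. The paper stays at radius $r$, uses the dilated function $u(x)=r^{2}f(r^{-1}(x-x_{0}))$ (with $f$ quadratic near $0$), and proves the scale-dependent bound $\|\mathcal{A}u\|\le c\,r^{2-\alpha}$ directly, splitting into the three cases $\alpha<1$, $\alpha=1$, $\alpha>1$; for $\alpha=1$ the condition (\ref{condition2fornxh_a=00003D00003D1}) is invoked explicitly to annihilate the drift contribution over the annulus $\{r<|h|\le 1\}$ (or $\{1<|h|\le r\}$), which would otherwise produce a $\log r$ term. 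You instead invoke Lemma~\ref{claim:(Rescaling)} to normalize to $r=1$, after which a single fixed bump function and the elementary $\{|h|\le1\}$/$\{|h|>1\}$ split suffice, with the $\alpha=1$ shell being empty; condition (\ref{condition2fornxh_a=00003D00003D1}) enters only once, through the validity of the rescaling lemma. The two routes are logically equivalent, but yours localizes the $\alpha=1$ subtlety to the rescaling step, which makes the unit-ball estimate itself uniform and case-free. One point you handled correctly and is worth emphasizing: the constant in the unit-ball bound must depend only on $d,\alpha,\kappa_{1}$ and not on the H\"older modulus $\kappa_{2}$, since the latter is not scale-invariant (Remark~\ref{rem: invariance of holder}); your estimate of $\sup_z|\mathcal{A}\phi(z)|$ indeed uses only the upper bound $n\le\kappa_{1}$.
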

\begin{proof}
The proof is essentially identical to that of \cite[Proposition 3.1]{MR2095633}.
Let $f\in C_{b}^{2}(\Rd)$ be a non-negative function that is equal
to $|x|^{2}$ for $|x|\le1/2$, which equals $1$ for $|x|\ge1$.
Let $r>0$ and $x_{0}\in\Rd$ be arbitrary. Define $u(x):=r^{2}f\left(r^{-1}(x-x_{0})\right)$,
$x\in\Rd$. Then $u\in C_{b}^{2}(\Rd)$, and $\|u\|\le c_{1}r^{2}$,
$\|\nabla u\|\le c_{1}r$ and $\|D^{2}u\|\le c_{1}$ for some positive
constant $c_{1}$. As shown in the proof of \cite[Proposition 3.1]{MR2095633},
there exists a constant $c_{2}>0$ such that
\begin{equation}
\left|\int_{{|h|\le r}}\left[u(x+h)-u(x)-h\cdot\nabla u(x)\right]\frac{n(x,h)}{|h|^{d+\alpha}}\mathrm{d}h\right|\le c_{2}r^{2-\alpha}\label{Bass esti 1}
\end{equation}
and
\begin{equation}
\left|\int_{{|h|>r}}\left[u(x+h)-u(x)\right]\frac{n(x,h)}{|h|^{d+\alpha}}\mathrm{d}h\right|\le c_{2}r^{2-\alpha}.\label{Bass esti 2}
\end{equation}
We now distinguish between the following three cases:

(i) $1<\alpha<2$. Since
\[
\left|\int_{{|h|>r}}h\cdot\nabla u(x)\frac{n(x,h)}{|h|^{d+\alpha}}\mathrm{d}h\right|\le c_{1}r\int_{{|h|>r}}\frac{n(x,h)}{|h|^{d+\alpha-1}}\mathrm{d}h\le c_{3}r^{2-\alpha},
\]
we get from (\ref{Bass esti 1}) and (\ref{Bass esti 2}) that $\|\mathcal{A}u\|\le c_{4}r^{2-\alpha}$.

(ii) $\alpha=1$. In view of (\ref{condition2fornxh_a=00003D00003D1}),
it follows directly from (\ref{Bass esti 1}) and (\ref{Bass esti 2})
that $\|\mathcal{A}u\|\le c_{2}r^{2-\alpha}$.

(iii) $0<\alpha<1$. We have
\begin{align*}
\left|\int_{{|h|\le r}}\left[u(x+h)-u(x)\right]\frac{n(x,h)}{|h|^{d+\alpha}}\mathrm{d}h\right| & \le\|\nabla u\|\int_{{|h|\le r}}\frac{n(x,h)}{|h|^{d+\alpha-1}}\mathrm{d}h\le c_{5}r^{2-\alpha},
\end{align*}
which together with (\ref{Bass esti 2}) implies $\|\mathcal{A}u\|\le c_{6}r^{2-\alpha}$.

Further, it was shown in \cite[Proposition 3.1]{MR2095633} that
\begin{align}
r^{2}\mathbf{\mathbf{{P}}}^{x_{0}}\left(\tau_{B_{r}(x_{0})}\le t\right) & \le\mathbf{\mathbf{{E}}}^{x_{0}}\left[u\left(X_{t\wedge\tau_{B_{r}(x_{0})}}\right)\right]\nonumber \\
 & =\mathbf{\mathbf{{E}}}^{x_{0}}\left[\int_{0}^{t\wedge\tau_{B_{r}(x_{0})}}\mathcal{A}u(X_{s})\mathrm{d}s\right]\le c_{7}tr^{2-\alpha},\label{eq 1: Bass}
\end{align}
 which implies the assertion.
\end{proof}
\begin{lem}
\label{lem 2: esti exit time of a ball}Assume $1<\alpha<2$. Let
$\mathcal{L}$ and $\left(X,\left(\mathbf{{L}}^{x}\right)\right)$
be as in Theorem \ref{thm: main}. Define $\tau_{B_{r}(x)}$ as in
Lemma \ref{lem: esti exit time of a ball}. Then for each $T>0$,
there exists a constant $C_{4}>0$ not depending on $x$ such that
for all $0<r<T$ and $t>0$,
\begin{equation}
\mathbf{\mathbf{{L}}}^{x}\left(\tau_{B_{r}(x)}\le t\right)\le C_{4}tr{}^{-\alpha}.\label{eq 2: Bass}
\end{equation}
\end{lem}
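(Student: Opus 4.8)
The plan is to mimic the proof of Lemma \ref{lem: esti exit time of a ball}, choosing the same test function $u(x)=r^{2}f\bigl(r^{-1}(x-x_{0})\bigr)$ with $f\in C_{b}^{2}(\Rd)$ equal to $|x|^{2}$ for $|x|\le 1/2$ and to $1$ for $|x|\ge1$, so that $\|u\|\le c_{1}r^{2}$, $\|\nabla u\|\le c_{1}r$ and $\|D^{2}u\|\le c_{1}$. The key point is that $\mathcal{L}=\mathcal{A}+b\cdot\nabla$ in the range $1<\alpha<2$, and the integral part $\mathcal{A}u$ is already controlled: the estimates \eqref{Bass esti 1}, \eqref{Bass esti 2} together with the bound on $\bigl|\int_{|h|>r}h\cdot\nabla u(x)\,n(x,h)|h|^{-d-\alpha}\mathrm{d}h\bigr|\le c_{3}r^{2-\alpha}$ established in case (i) of the previous lemma give $\|\mathcal{A}u\|\le c_{4}r^{2-\alpha}$. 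It therefore only remains to absorb the drift contribution.

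First I would estimate the extra term: since $b$ is bounded by $\kappa_{3}$ (Assumption \ref{ass2:The-vector-field}) and $\|\nabla u\|\le c_{1}r$, we have $\|b\cdot\nabla u\|\le \kappa_{3}c_{1}r$. Because $0<r<T$ and $1<\alpha<2$, we can write $r = r^{\alpha-1}\cdot r^{2-\alpha}\le T^{\alpha-1}r^{2-\alpha}$, so $\|b\cdot\nabla u\|\le c_{5}r^{2-\alpha}$ with $c_{5}=\kappa_{3}c_{1}T^{\alpha-1}$. Combining, $\|\mathcal{L}u\|\le \|\mathcal{A}u\|+\|b\cdot\nabla u\|\le c_{6}r^{2-\alpha}$, where $c_{6}$ depends on $d,\alpha,\kappa_{0},\kappa_{1},\kappa_{3},T$ but not on $x_{0}$.

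Then I would run exactly the Dynkin-type argument of \eqref{eq 1: Bass}: since $u\in C_{b}^{2}(\Rd)$ and $\mathbf{L}^{x_{0}}$ solves the martingale problem for $\mathcal{L}$, optional stopping gives
\[
r^{2}\mathbf{L}^{x_{0}}\bigl(\tau_{B_{r}(x_{0})}\le t\bigr)\le \mathbf{E}^{x_{0}}\Bigl[u\bigl(X_{t\wedge\tau_{B_{r}(x_{0})}}\bigr)\Bigr]=\mathbf{E}^{x_{0}}\Bigl[\int_{0}^{t\wedge\tau_{B_{r}(x_{0})}}\mathcal{L}u(X_{s})\,\mathrm{d}s\Bigr]\le c_{6}\,t\,r^{2-\alpha},
\]
using that $u\ge 0$, that $u\ge r^{2}$ on $\partial B_{r}(x_{0})$, and that $X_{0}=x_{0}$ under $\mathbf{L}^{x_{0}}$. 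Dividing by $r^{2}$ yields \eqref{eq 2: Bass} with $C_{4}=c_{6}$. The only mild obstacle is bookkeeping the dependence of the constant: one must check that all the bounds on $\mathcal{A}u$ quoted from the previous lemma, plus the drift bound, produce a constant that depends on $T$ only through the factor $T^{\alpha-1}$ coming from $r\le T^{\alpha-1}r^{2-\alpha}$, so that it is genuinely independent of $x_{0}$; everything else is a direct transcription of the earlier argument, and no $\alpha=1$ cancellation is needed here since $\alpha>1$.
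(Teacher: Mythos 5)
Your proof is correct and follows essentially the same argument as the paper: choose the same test function $u$, observe $\mathcal{L}u=\mathcal{A}u+b\cdot\nabla u$, reuse the bound $\|\mathcal{A}u\|\le c\,r^{2-\alpha}$ from Lemma \ref{lem: esti exit time of a ball}, bound the drift contribution by a multiple of $r$, absorb $r$ into $r^{2-\alpha}$ via $r\le T^{\alpha-1}r^{2-\alpha}$ for $0<r<T$, and run the Dynkin/optional-stopping step as in \eqref{eq 1: Bass}. The only cosmetic difference is that the paper carries the bound $c(r^{2-\alpha}+r)$ through the Dynkin inequality and simplifies at the end, whereas you simplify before; the constant's $T$-dependence you identify matches the paper's.
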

\begin{proof}
Let the function $u$ be as in the proof of Lemma \ref{lem: esti exit time of a ball}.
Note that $\mathcal{L}u=\mathcal{A}u+b\cdot\nabla u$ and $\|\mathcal{A}u\|\le c_{1}r^{2-\alpha}$,
$r>0$, which was already proved in proof of Lemma \ref{lem: esti exit time of a ball}.
Then we obtain from $\|b\cdot\nabla u\|\le c_{2}\kappa_{2}r$ that
$\|\mathcal{L}u\|\le c_{3}(r^{2-\alpha}+r)$, $r>0$. Similarly to
(\ref{eq 1: Bass}), we get
\[
r^{2}\mathbf{\mathbf{{L}}}^{x_{0}}\left(\tau_{B_{r}(x_{0})}\le t\right)\le c_{4}t(r^{2-\alpha}+r)\le c_{5}tr^{2-\alpha},\quad0<r<T.
\]
So (\ref{eq 2: Bass}) follows.
\end{proof}

\subsection{Some inequalities and estimates\label{subsec:Some-inequalities-and}}

Let $\gamma>0$ be a constant. It follows from \cite[p.277, (2.9)]{chen2015heat}
that for $|z|\le(2t^{1/\alpha})\lor\left(|x|/2\right)$,
\begin{equation}
\left(t^{1/\alpha}+|x+z|\right)^{-\gamma}\le4^{\gamma}\left(t^{1/\alpha}+|x|\right)^{-\gamma}.\label{ineq 1: chen}
\end{equation}
Following the notation in \cite{chen2015heat}, we write
\[
\varrho_{\gamma}^{\beta}(t,x):=t^{\gamma/\alpha}(|x|^{\beta}\wedge1)(t^{1/\alpha}+|x|)^{-d-\alpha},\quad(t,x)\in(0,\infty)\times\Rd.
\]

As shown in \cite{chen2015heat}, the following convolution inequalities
hold.
\begin{lem}
\emph{(\cite[Lemma 2.1]{chen2015heat})}\noun{ (}\emph{i}\noun{)}
For all $\beta\in[0,\alpha/2]$ and $\gamma\in\mathbb{R}$, there
exists some constant $C_{5}=C_{5}(d,\alpha)>0$ such that
\begin{equation}
\int_{\Rd}\varrho_{\gamma}^{\beta}(t,x)\mathrm{d}x\le C_{5}t^{\frac{\gamma+\beta-\alpha}{\alpha}},\quad(t,x)\in(0,1]\times\Rd.\label{esti1:rho}
\end{equation}
\noun{(}\emph{ii}\noun{)} For all $\beta_{1},\beta_{2}\in[0,\alpha/4]$,
and $\gamma_{1},\gamma_{2}\in\mathbb{R}$, there exists some constant
$C_{6}=C_{6}(d,\alpha)>0$ such that for all $0<s<t\le1$ and $x\in\Rd$,
\begin{align}
 & \int_{\Rd}\varrho_{\gamma_{1}}^{\beta_{1}}(t-s,x-z)\varrho_{\gamma_{2}}^{\beta_{2}}(s,z)\mathrm{d}z\nonumber \\
 & \quad\le C_{6}\left((t-s)^{\frac{\gamma_{1}+\beta_{1}+\beta_{2}-\alpha}{\alpha}}s^{\frac{\gamma_{2}}{\alpha}}+(t-s)^{\frac{\gamma_{1}}{\alpha}}s^{\frac{\gamma_{2}+\beta_{1}+\beta_{2}-\alpha}{\alpha}}\right)\varrho_{0}^{0}(t,x)\nonumber \\
 & \qquad+C_{6}(t-s)^{\frac{\gamma_{1}+\beta_{1}-\alpha}{\alpha}}s^{\frac{\gamma_{2}}{\alpha}}\varrho_{0}^{\beta_{2}}(t,x)+C_{6}(t-s)^{\frac{\gamma_{1}}{\alpha}}s{}^{\frac{\gamma_{2}+\beta_{2}-\alpha}{\alpha}}\varrho_{0}^{\beta_{1}}(t,x).\label{esti2:rho}
\end{align}
\noun{(}\emph{iii}\noun{)} For all $\beta_{1},\beta_{2}\in[0,\alpha/4]$,
$\gamma_{1}+\beta_{1}>0$ and $\gamma_{2}+\beta_{2}>0$, there exists
some constant $C_{7}=C_{7}(d,\alpha)>0$ such that for all $0<s<t\le1$
and $x\in\Rd$,
\begin{align}
 & \int_{0}^{t}\int_{\Rd}\varrho_{\gamma_{1}}^{\beta_{1}}(t-s,x-z)\varrho_{\gamma_{2}}^{\beta_{2}}(s,z)\mathrm{d}z\mathrm{d}s\nonumber \\
 & \quad\le C_{7}\mathcal{B}\left(\frac{\gamma_{1}+\beta_{1}}{\alpha},\frac{\gamma_{2}+\beta_{2}}{\alpha}\right)\left(\varrho_{\gamma_{1}+\gamma_{2}+\beta_{1}+\beta_{2}}^{0}+\varrho_{\gamma_{1}+\gamma_{2}+\beta_{2}}^{\beta_{1}}+\varrho_{\gamma_{1}+\gamma_{2}+\beta_{1}}^{\beta_{2}}\right)(t,x),\label{esti3:rho}
\end{align}
where $\mathcal{B}(\gamma,\beta)$ is the Beta function with parameters
$\gamma,\beta>0$.
\end{lem}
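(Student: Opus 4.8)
These are exactly the convolution estimates of \cite[Lemma 2.1]{chen2015heat}; here is how I would prove them.

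\emph{Estimate (\ref{esti1:rho}).} The plan is to rescale. The substitution $x=t^{1/\alpha}y$ turns $\int_{\Rd}\varrho_{\gamma}^{\beta}(t,x)\,\mathrm{d}x$ into $t^{(\gamma-\alpha)/\alpha}\int_{\Rd}\big(t^{\beta/\alpha}|y|^{\beta}\wedge1\big)(1+|y|)^{-d-\alpha}\,\mathrm{d}y$. I would then split the $y$-integral at $|y|=t^{-1/\alpha}$: on $\{|y|\le t^{-1/\alpha}\}$ the minimum equals $t^{\beta/\alpha}|y|^{\beta}$ and $\int_{\Rd}|y|^{\beta}(1+|y|)^{-d-\alpha}\,\mathrm{d}y<\infty$ because $\beta\le\alpha/2<\alpha$, while on $\{|y|>t^{-1/\alpha}\}$ the minimum is at most $1$ and $\int_{\{|y|>t^{-1/\alpha}\}}(1+|y|)^{-d-\alpha}\,\mathrm{d}y\lesssim t\le t^{\beta/\alpha}$ since $t\le1$. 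Adding the two contributions gives (\ref{esti1:rho}).

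\emph{Estimate (\ref{esti2:rho}).} The plan is a double case analysis. Since $|x|\le|x-z|+|z|$, I would decompose $\Rd$ in the variable $z$ into $R_{1}=\{|z|\le|x|/2\}$, $R_{2}=\{|x-z|\le|x|/2\}$ and $R_{3}=\{|z|>|x|/2,\ |x-z|>|x|/2\}$, and at the same time split the time interval at $s=t/2$ (so that whichever of $t-s$ and $s$ is the larger is comparable to $t$). On $R_{1}$ one has $|x-z|\approx|x|$, so using (\ref{ineq 1: chen}) (together with $(t-s)^{1/\alpha}\approx t^{1/\alpha}$ or $s^{1/\alpha}\approx t^{1/\alpha}$, and treating separately the degenerate subcase $|x|\le t^{1/\alpha}$) one bounds whichever kernel is convenient by a power of $t-s$ or of $s$ times $\varrho_{0}^{\beta_{1}}(t,x)$ or $\varrho_{0}^{\beta_{2}}(t,x)$, and integrates the other kernel by (\ref{esti1:rho}); this yields the last two terms of (\ref{esti2:rho}), and $R_{2}$ is symmetric. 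On $R_{3}$ both $|z|$ and $|x-z|$ exceed $|x|/2$, so the kernel with the larger time variable is bounded by a power of $t-s$ or of $s$ times $\varrho_{0}^{0}(t,x)$; splitting $R_{3}$ once more according to whether $|z|$ or $|x-z|$ dominates, so that $|x-z|\approx|z|$ on each piece, and using $(|w|^{\beta_{1}}\wedge1)(|w|^{\beta_{2}}\wedge1)=|w|^{\beta_{1}+\beta_{2}}\wedge1$ to apply (\ref{esti1:rho}) with the exponent $\beta_{1}+\beta_{2}\le\alpha/2$ — which is exactly why $\beta_{i}\le\alpha/4$ is assumed in this part — one obtains the two $\varrho_{0}^{0}(t,x)$ terms. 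I expect the region $R_{3}$, and the bookkeeping needed to match exactly the four stated terms with the correct powers of $t-s$ and $s$, to be the main obstacle.

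\emph{Estimate (\ref{esti3:rho}).} The plan is to integrate (\ref{esti2:rho}) over $s\in(0,t)$ and use the Euler integral $\int_{0}^{t}(t-s)^{p-1}s^{q-1}\,\mathrm{d}s=t^{p+q-1}\mathcal{B}(p,q)$; this is legitimate because $\gamma_{1}+\beta_{1}>0$ and $\gamma_{2}+\beta_{2}>0$ (hence also $\gamma_{i}>-\beta_{i}\ge-\alpha/4$) make all the exponents that arise positive. Each of the four terms of (\ref{esti2:rho}) integrates in $s$ to one of $\varrho_{\gamma_{1}+\gamma_{2}+\beta_{1}+\beta_{2}}^{0}$, $\varrho_{\gamma_{1}+\gamma_{2}+\beta_{2}}^{\beta_{1}}$, $\varrho_{\gamma_{1}+\gamma_{2}+\beta_{1}}^{\beta_{2}}$ at $(t,x)$ times a Beta factor; since each such factor has both arguments no smaller than the corresponding argument of $\mathcal{B}\big(\tfrac{\gamma_{1}+\beta_{1}}{\alpha},\tfrac{\gamma_{2}+\beta_{2}}{\alpha}\big)$ (here one uses $\beta_{i}/\alpha\le1/4<1$) and $\mathcal{B}$ is decreasing in each argument, every Beta factor is $\le\mathcal{B}\big(\tfrac{\gamma_{1}+\beta_{1}}{\alpha},\tfrac{\gamma_{2}+\beta_{2}}{\alpha}\big)$, which gives (\ref{esti3:rho}).
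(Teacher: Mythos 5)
This lemma is quoted verbatim from \cite[Lemma 2.1]{chen2015heat}; the present paper does not reprove it, so there is no internal proof to compare against. Your sketches of (i) and (iii) are complete and correct: the rescaling $x=t^{1/\alpha}y$ plus the split at $|y|=t^{-1/\alpha}$ gives (i), and for (iii) the Euler integral together with the fact that $\mathcal{B}$ is decreasing in each argument (since $\partial_p\mathcal{B}(p,q)=\int_0^1(\ln s)\,s^{p-1}(1-s)^{q-1}\,\mathrm{d}s<0$) yields exactly the claimed bound.

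For (ii) your plan does go through, but it carries more case analysis than needed, and the degenerate subcase $|x|\lesssim t^{1/\alpha}$ — where $R_1,R_2,R_3$ all collapse to a small ball — has to be treated by a separate argument, which is precisely the bookkeeping you flag as the obstacle. A streamlined variant removes both the time split at $s=t/2$ and the degenerate case. From
\[
t^{1/\alpha}+|x|\le\big((t-s)^{1/\alpha}+|x-z|\big)+\big(s^{1/\alpha}+|z|\big)
\]
one of the two summands is $\ge\tfrac12(t^{1/\alpha}+|x|)$; split $\Rd$ into the two corresponding sets. On the set where $(t-s)^{1/\alpha}+|x-z|\ge\tfrac12(t^{1/\alpha}+|x|)$ one has
\[
\varrho_{\gamma_1}^{\beta_1}(t-s,x-z)\le 2^{d+\alpha}(t-s)^{\gamma_1/\alpha}\,(|x-z|^{\beta_1}\wedge1)\,(t^{1/\alpha}+|x|)^{-d-\alpha},
\]
and then the elementary sub-additivity $|x-z|^{\beta}\wedge1\le(|x|^{\beta}\wedge1)+(|z|^{\beta}\wedge1)$ for $\beta\in[0,1]$ (so for $\beta_i\le\alpha/4<1$) splits this into a piece proportional to $\varrho_0^{\beta_1}(t,x)$, against which you integrate $\varrho_{\gamma_2}^{\beta_2}(s,\cdot)$ by (i) to get the fourth term, and a piece with the factor $|z|^{\beta_1}\wedge1$, against which you integrate $\varrho_{\gamma_2}^{\beta_1+\beta_2}(s,\cdot)$ by (i) — using $\beta_1+\beta_2\le\alpha/2$ exactly as you observed — to get the second term. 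The symmetric set yields the first and third terms. This reproduces (\ref{esti2:rho}) directly with the correct powers and no case analysis on the size of $|x|$ relative to $t^{1/\alpha}$.
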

For $\lambda>0$, define $u_{\lambda}(x):=\int_{0}^{\infty}e^{-\lambda t}\varrho_{\alpha}^{0}(s,x)\mathrm{d}x,\ x\in\Rd$.
According to \cite[Lemma 3, Lemma 7 and Theorem 8]{MR2283957}, there
exist constants $C_{8}=C_{8}(d,\alpha)>1$ and $C_{9}=C_{9}(d,\alpha)>1$
such that for all $\lambda>0$ and $x,y,z\in\Rd$,
\begin{align}
 & C_{8}^{-1}\left(\lambda^{(d-\alpha)/\alpha}\lor|x|^{\alpha-d}\right)\wedge\left(\lambda^{-2}|x|^{-d-\alpha}\right)\nonumber \\
 & \qquad\quad\le u_{\lambda}(x)\le C_{8}\left(\lambda^{(d-\alpha)/\alpha}\lor|x|^{\alpha-d}\right)\wedge\left(\lambda^{-2}|x|^{-d-\alpha}\right)\label{esti: u_lambda}
\end{align}
and
\begin{equation}
u_{\lambda}(x-z)\wedge u_{\lambda}(z-y)\le C_{9}u_{\lambda}(x-y).\label{ineq: 3U}
\end{equation}

\begin{lem}
Assume $1<\alpha<2$. Define $k_{\lambda}(x):=\int_{0}^{\infty}e^{-\lambda t}\varrho_{\alpha-1}^{0}(s,x)\mathrm{d}x,\ x\in\Rd.$\emph{
Then there exist constants $C_{10}=C_{10}(d,\alpha)>0$ and $C_{11}=C_{11}(d,\alpha)>0$
such that
\begin{equation}
k_{\lambda}(x)\le C_{10}\left(|x|^{\alpha-d-1}\right)\wedge\left(\lambda^{-2+1/\alpha}|x|^{-d-\alpha}\right),\quad\lambda>0,x\in\Rd,\label{esti: k_lambda}
\end{equation}
and
\begin{equation}
\int_{\Rd}u_{\lambda}(x-z)k_{\lambda}(z-y)\mathrm{d}z\le C_{11}\lambda^{-1+1/\alpha}u_{\lambda}(x-y),\quad\lambda>0,\ x,y\in\Rd.\label{ineq: uk_lambda}
\end{equation}
 \label{Lemma uk_lambda}}
\end{lem}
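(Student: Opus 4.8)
The plan is to use the representations of $u_\lambda$ and $k_\lambda$ as Laplace transforms in time and to reduce both claims to properties of $\varrho^0_\alpha$ (which is comparable to the transition density of the rotationally symmetric $\alpha$-stable process). The computation that makes everything go through is the elementary identity, immediate from the definition of $\varrho^\beta_\gamma$,
\[
\varrho^0_{\alpha-1}(s,x)=s^{-1/\alpha}\,\varrho^0_\alpha(s,x),\qquad s>0,\ x\in\Rd,
\]
so that $k_\lambda(x)=\int_0^\infty e^{-\lambda s}s^{-1/\alpha}\varrho^0_\alpha(s,x)\,\mathrm ds=\int_0^\infty e^{-\lambda s}s^{1-1/\alpha}(s^{1/\alpha}+|x|)^{-d-\alpha}\,\mathrm ds$.

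To prove (\ref{esti: k_lambda}) I would bound the two terms in the minimum separately. For $k_\lambda(x)\le C_{10}\lambda^{-2+1/\alpha}|x|^{-d-\alpha}$, estimate $(s^{1/\alpha}+|x|)^{-d-\alpha}\le|x|^{-d-\alpha}$ and evaluate $\int_0^\infty e^{-\lambda s}s^{1-1/\alpha}\,\mathrm ds=\Gamma(2-1/\alpha)\,\lambda^{-2+1/\alpha}$; here $2-1/\alpha>0$ precisely because $\alpha>1$. For $k_\lambda(x)\le C_{10}|x|^{\alpha-d-1}$, drop $e^{-\lambda s}$ and split the integral at $s=|x|^\alpha$: on $\{0<s\le|x|^\alpha\}$ use $(s^{1/\alpha}+|x|)^{-d-\alpha}\le|x|^{-d-\alpha}$, giving the contribution $|x|^{-d-\alpha}\int_0^{|x|^\alpha}s^{1-1/\alpha}\,\mathrm ds=c\,|x|^{\alpha-d-1}$; on $\{s>|x|^\alpha\}$ use $(s^{1/\alpha}+|x|)^{-d-\alpha}\le s^{-(d+\alpha)/\alpha}$, giving $\int_{|x|^\alpha}^\infty s^{-(d+1)/\alpha}\,\mathrm ds=c'\,|x|^{\alpha-d-1}$, the tail integral converging since $d+1\ge2>\alpha$. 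Taking $C_{10}$ larger than both constants yields (\ref{esti: k_lambda}).

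For (\ref{ineq: uk_lambda}) I would insert the Laplace representations of $u_\lambda$ and $k_\lambda$ and apply Tonelli to get
\[
\int_{\Rd}u_\lambda(x-z)k_\lambda(z-y)\,\mathrm dz=\int_0^\infty\!\!\int_0^\infty e^{-\lambda(s+t)}t^{-1/\alpha}\Bigl(\int_{\Rd}\varrho^0_\alpha(s,x-z)\varrho^0_\alpha(t,z-y)\,\mathrm dz\Bigr)\mathrm ds\,\mathrm dt.
\]
The inner spatial integral is at most $c_1\,\varrho^0_\alpha(s+t,x-y)$ for all $s,t>0$: when $s+t\le1$ this is the special case $\beta_1=\beta_2=0$, $\gamma_1=\gamma_2=\alpha$ of \cite[Lemma 2.1(ii)]{chen2015heat}, for which the right-hand side of (\ref{esti2:rho}) collapses to a multiple of $\varrho^0_\alpha(s+t,x-y)$; the restriction $s+t\le1$ is then removed by the scaling identity $\varrho^0_\alpha(a^\alpha s,ax)=a^{-d}\varrho^0_\alpha(s,x)$. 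After the substitution $r=s+t$ and the evaluation $\int_0^r t^{-1/\alpha}\,\mathrm dt=\tfrac{\alpha}{\alpha-1}r^{1-1/\alpha}$, the right-hand side becomes $c_2\int_0^\infty e^{-\lambda r}r^{1-1/\alpha}\varrho^0_\alpha(r,x-y)\,\mathrm dr$. The elementary inequality $r^{1-1/\alpha}e^{-\lambda r}\le c_3\lambda^{-1+1/\alpha}e^{-\lambda r/2}$ (from $\sup_{r>0}r^{a}e^{-br}=(a/(eb))^{a}$) then bounds this by $c_4\lambda^{-1+1/\alpha}u_{\lambda/2}(x-y)$, and $u_{\lambda/2}\le c_5 u_\lambda$ is read off from (\ref{esti: u_lambda}) (replacing $\lambda$ by $\lambda/2$ changes only the constants there). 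This gives (\ref{ineq: uk_lambda}) with $C_{11}=c_4c_5$.

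The only step needing a little care is the semigroup-type bound for $\int_{\Rd}\varrho^0_\alpha(s,x-z)\varrho^0_\alpha(t,z-y)\,\mathrm dz$ over the whole range $s,t\in(0,\infty)$, since \cite[Lemma 2.1]{chen2015heat} is stated only for times in $(0,1]$; this is disposed of by the scaling identity above, or equivalently by the comparability of $\varrho^0_\alpha(s,\cdot)$ with the $\alpha$-stable heat kernel, which obeys the Chapman--Kolmogorov identity exactly. Everything else reduces to one-dimensional integrals, and the hypothesis $1<\alpha<2$ enters only to ensure $2-1/\alpha>0$, $1-1/\alpha>0$, and $(d+1)/\alpha>1$.
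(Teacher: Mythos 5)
Your proof is correct. The first estimate (\ref{esti: k_lambda}) is handled essentially as in the paper: the paper first reduces to $\lambda=1$ by the scaling $k_\lambda(x)=\lambda^{(d+1-\alpha)/\alpha}k_1(\lambda^{1/\alpha}x)$ and then splits the $s$-integral at $|x|^\alpha$; you prove each bound in the minimum directly for general $\lambda$, which is a cosmetic difference only.

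The second estimate (\ref{ineq: uk_lambda}) is where your route genuinely diverges from the paper's. The paper follows the method of \cite[Lemma 17]{MR2283957}: it introduces the auxiliary function $w_\lambda(x):=\left[\left(\lambda^{-(d-\alpha)/\alpha}|x|^{\alpha-d-1}\right)\wedge|x|^{-1}\right]\vee\lambda^{1/\alpha}$, checks from (\ref{esti: u_lambda}) and (\ref{esti: k_lambda}) that $k_\lambda\le c\,w_\lambda u_\lambda$, then pulls out $u_\lambda(x-y)$ via the ``3U'' inequality (\ref{ineq: 3U}), reducing the convolution to $\int_{\Rd}w_\lambda(z)u_\lambda(z)\,\mathrm dz\le c\,\lambda^{-1+1/\alpha}$. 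You instead go back to the Laplace representations, apply Tonelli, and use the semigroup-type bound $\int_{\Rd}\varrho^0_\alpha(s,x-z)\varrho^0_\alpha(t,z-y)\,\mathrm dz\le c\,\varrho^0_\alpha(s+t,x-y)$; after the change of variables $r=s+t$ the whole thing collapses to a one-dimensional integral, and the sup bound $r^{1-1/\alpha}e^{-\lambda r/2}\le c\,\lambda^{-1+1/\alpha}$ plus $u_{\lambda/2}\le c\,u_\lambda$ (read off from (\ref{esti: u_lambda})) closes the argument. This is cleaner: it avoids the $w_\lambda$ bookkeeping and the 3U inequality altogether, at the price of needing the semigroup bound for all $s,t>0$, which you correctly obtain by scaling (or by comparing $\varrho^0_\alpha(s,\cdot)$ to the exact $\alpha$-stable kernel). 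Both approaches are valid; yours makes more transparent use of the structure $\varrho^0_{\alpha-1}(s,x)=s^{-1/\alpha}\varrho^0_\alpha(s,x)$, while the paper's stays closer to the potential-theoretic template of Bogdan--Jakubowski, which the author perhaps preferred for consistency with the cited lemma.
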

\begin{proof}
It is easy to see that $k_{\lambda}(x)=\lambda^{(d+1-\alpha)/\alpha}k_{1}\left(\lambda^{1/\alpha}x\right).$
So it suffices to show (\ref{esti: k_lambda}) for $\lambda=1$. For
$x\in\Rd$, we have
\[
k_{1}(x)\le\int_{0}^{|x|^{\alpha}}\frac{e^{-t}t^{1-1/\alpha}}{|x|^{d+\alpha}}\mathrm{d}t+\int_{|x|^{\alpha}}^{\infty}\frac{e^{-t}t^{1-1/\alpha}}{t^{(d+\alpha)/\alpha}}\mathrm{d}t.
\]
Therefore, for $|x|>1$,
\begin{align*}
k_{1}(x) & \le c_{1}|x|^{-d-\alpha}+|x|^{-d-1}\int_{|x|^{\alpha}}^{\infty}e^{-t}\mathrm{d}t\\
 & \le c_{1}|x|^{-d-\alpha}+|x|^{-d-1}e^{-|x|^{\alpha}}\le c_{2}|x|^{-d-\alpha};
\end{align*}
for $|x|\le1$,
\[
k_{1}(x)\le|x|^{-d-\alpha}\int_{0}^{|x|^{\alpha}}t^{1-1/\alpha}\mathrm{d}t+\int_{|x|^{\alpha}}^{\infty}t^{-(d+1)/\alpha}\mathrm{d}t\le c_{3}|x|^{-d+\alpha-1}.
\]

So (\ref{esti: k_lambda}) is true. To show (\ref{ineq: uk_lambda}),
we proceed in the same way as in the proof of \cite[Lemma 17]{MR2283957}.
Set $w_{\lambda}(x):=\left[\left(\lambda^{-(d-\alpha)/\alpha}|x|^{\alpha-d-1}\right)\wedge\left(|x|^{-1}\right)\right]\lor\left(\lambda^{1/\alpha}\right)$.
It follows from (\ref{esti: u_lambda}) and (\ref{esti: k_lambda})
that $k_{\lambda}(x)\le c_{4}w_{\lambda}(x)u_{\lambda}(x)$ for all
$\lambda>0$ and $x\in\Rd$. So
\begin{align}
 & \int_{\Rd}u_{\lambda}(x-z)k_{\lambda}(z-y)\mathrm{d}z\nonumber \\
 & \quad\overset{(\ref{ineq: 3U})}{\le}c_{4}u_{\lambda}(x-y)\int_{\Rd}w_{\lambda}(z-y)\frac{u_{\lambda}(x-z)u_{\lambda}(z-y)}{u_{\lambda}(x-y)}\mathrm{d}z\nonumber \\
 & \quad\le c_{4}u_{\lambda}(x-y)\int_{\Rd}w_{\lambda}(z-y)\left(u_{\lambda}(x-z)\lor u_{\lambda}(z-y)\right)\mathrm{d}z\nonumber \\
 & \quad\le c_{4}u_{\lambda}(x-y)\int_{\Rd}\left[\left(w_{\lambda}(x-z)u_{\lambda}(x-z)\right)\lor\left(w_{\lambda}(z-y)u_{\lambda}(z-y)\right)\right]\mathrm{d}z\label{lemma, uk_lambda: eq0.5}\\
 & \quad\le c_{4}u_{\lambda}(x-y)\int_{\Rd}\left[\left(w_{\lambda}(x-z)u_{\lambda}(x-z)\right)+\left(w_{\lambda}(z-y)u_{\lambda}(z-y)\right)\right]\mathrm{d}z\nonumber \\
 & \quad\le2c_{4}u_{\lambda}(x-y)\int_{\Rd}w_{\lambda}(z)u_{\lambda}(z)\mathrm{d}z,\label{lemma, uk_lambda: eq1}
\end{align}
where in (\ref{lemma, uk_lambda: eq0.5}) we used the fact that $w(z-y)$
and $u_{\lambda}(z-y)$ are decreasing in $|z-y|$. By (\ref{esti: u_lambda})
and the definition of $w_{\lambda}$, we have
\[
w_{\lambda}(z)u_{\lambda}(z)\le c_{5}\left(|z|^{\alpha-d-1}\right)\wedge\left(\lambda^{-2+1/\alpha}|z|^{-d-\alpha}\right),\quad\lambda>0,z\in\Rd.
\]
Thus
\begin{align}
\int_{\Rd}w_{\lambda}(z)u_{\lambda}(z)\mathrm{d}z & \le c_{5}\int_{|z|\le\lambda^{-1/\alpha}}|z|^{\alpha-d-1}\mathrm{d}z+c_{5}\int_{|z|\le\lambda^{-1/\alpha}}\lambda^{-2+1/\alpha}|z|^{-d-\alpha}\mathrm{d}z\nonumber \\
 & \le c_{6}\lambda^{-1+1/\alpha}.\label{lemma, uk_lambda: eq2}
\end{align}
So (\ref{ineq: uk_lambda}) follows by (\ref{lemma, uk_lambda: eq1})
and (\ref{lemma, uk_lambda: eq2}).
\end{proof}

\section{Stable-like Lévy processes and their density estimates}

Consider a Lévy process $Z=(Z_{t})_{t\geq0}$ with $Z_{0}=0$ a.s.,
which is defined on some probability space $(\Omega,\mathcal{A},\mathbf{P})$
and whose characteristic function is given by
\begin{align*}
\bfE\big[e^{iZ_{t}\cdot u}\big] & =e^{-t\psi(u)},\quad u\in\Rd,\\
\psi(u) & =-\int_{\mathbb{R}^{d}\setminus\{0\}}\Big(e^{iu\cdot h}-1-\chi_{\alpha}(h)iu\cdot h\Big)K(h)\mathrm{d}h.
\end{align*}

Throughout this section we assume that the function $K:\Rd\to\mathbb{R}$
satisfies

\begin{equation}
\frac{\kappa_{0}}{|h|^{d+\alpha}}\le K(h)\le\frac{\kappa_{1}}{|h|^{d+\alpha}},\quad h\in\mathbb{R}^{d},\label{condition1forK}
\end{equation}
where $\kappa_{1}>\kappa_{0}>0$ are the constants appearing in Assumption
\ref{ass1:the function n}. In the case $\alpha=1,$ we assume in
addition to (\ref{condition1forK}) that
\begin{equation}
\int_{\partial B_{r}}K(h)z\mathrm{d}S_{r}(h)=0,\text{\quad\ensuremath{\forall}}r\text{\ensuremath{\in}}(0,\infty).\label{condition2forK_a=00003D00003D1}
\end{equation}

In view of (\ref{condition1forK}), we call $Z$ a stable-like Lévy
process. The aim of this section is to establish some estimates for
the density functions of $Z$. To this end, we follow the same idea
as in \cite{chen2015heat}. Define $\tilde{K}:\Rd\to\mathbb{R}$ by
$\tilde{K}(h):=K(h)-\kappa_{0}/(2|h|^{d+\alpha})$, $z\in\Rd$. So
\begin{equation}
\frac{2^{-1}\kappa_{0}}{|h|^{d+\alpha}}\le\tilde{K}(h)\le\frac{\kappa_{1}-2^{-1}\kappa_{0}}{|h|^{d+\alpha}},\quad h\in\mathbb{R}^{d}.\label{bound:K_1}
\end{equation}
Note that if $\alpha=1,$ then
\begin{equation}
\int_{\partial B_{r}}\tilde{K}(h)z\mathrm{d}S_{r}(h)=0,\text{\quad\ensuremath{\forall}}r\text{\ensuremath{\in}}(0,\infty).\label{conditon 2: K tilde}
\end{equation}
Let
\begin{equation}
\tilde{\psi}(u):=-\int_{\mathbb{R}^{d}\setminus\{0\}}\Big(e^{iu\cdot h}-1-\chi_{\alpha}(h)iu\cdot h\Big)\tilde{K}(h)\mathrm{d}h,\quad u\in\Rd,\label{defi:psi1}
\end{equation}
and $\tilde{Z}=(\tilde{Z}_{t})_{t\geq0}$ be a stable-like Lévy process
with the characteristic exponent $\tilde{\psi}$. Without loss of
generality, we assume that the process $(\tilde{Z}_{t})$ is also
defined on $(\Omega,\mathcal{A},\mathbf{P})$.

We can write
\begin{align*}
\psi(u) & =-\int_{\mathbb{R}^{d}\setminus\{0\}}\Big(e^{iu\cdot h}-1-\chi_{\alpha}(h)iu\cdot h\Big)\left(\frac{\kappa_{0}}{2|h|^{d+\alpha}}+\tilde{K}(h)\right)\mathrm{d}h\\
 & =C_{12}|u|^{\alpha}+\tilde{\psi}(u),
\end{align*}
where $C_{12}=C_{12}(d,\alpha,\kappa_{0})>0$ is a constant. It holds
\begin{equation}
e^{-t\Re(\psi(u))}=|e^{-t\psi(u)}|=\big|e^{-t\left(C_{12}|u|^{\alpha}+\tilde{\psi}(u)\right)}\big|=e^{-tC_{12}|u|^{\alpha}}\big|e^{-t\tilde{\psi}(u)}\big|\le e^{-tC_{12}|u|^{\alpha}},\label{MPAFLsect31-1}
\end{equation}
where $\Re(x)$ denotes the real part of $x\in\mathbb{C}$. Therefore,
we get
\begin{equation}
\Re(\psi(u))\ge C_{12}|u|^{\alpha},\quad u\in\Rd,\ t\ge0.\label{MPAFLsect31}
\end{equation}

By (\ref{MPAFLsect31-1}) and the inversion formula of Fourier transform,
the law of $Z_{t}$ has a density (with respect to the Lebesgue measure)
$f_{t}\in L^{1}(\mathbb{R}^{d})\cap C_{b}(\mathbb{R}^{d})$ that is
given by
\begin{equation}
f_{t}(x)=\frac{1}{(2\pi)^{d}}\int_{\mathbb{R}^{d}}e^{-iu\cdot x}e^{-t\psi(u)}\mathrm{d}u,\quad x\in\mathbb{R}^{d},\ t>0.\label{MPAFLsect312}
\end{equation}

Similarly, we define
\begin{equation}
g_{t}(x):=\frac{1}{(2\pi)^{d}}\int_{\mathbb{R}^{d}}e^{-iu\cdot x}e^{-tC_{12}|u|^{\alpha}}\mathrm{d}u\label{defi: g_t}
\end{equation}
and
\[
\tilde{f}_{t}(x):=\frac{1}{(2\pi)^{d}}\int_{\mathbb{R}^{d}}e^{-iu\cdot x}e^{-t\tilde{\psi}(u)}\mathrm{d}u
\]
for $x\in\mathbb{R}^{d},\ t>0$. Then $g_{t}$ and $h_{t}$ are densities
of some rotationally symmetric $\alpha$-stable process $(S_{t})$
and the stable-like Lévy process $(\tilde{Z}_{t})$, respectively.
It is clear that $f_{t}=g_{t}*\tilde{f}_{t}$. Since $g_{t}$ is the
density of a rotationally symmetric $\alpha$-stable process, we have
the following scaling property of $g_{t}$: for all $x\in\mathbb{R}^{d}$
and $t>0$,
\begin{equation}
g_{t}(x)=t^{-d/\alpha}g_{1}(t^{-1/\alpha}x).\label{scaling : gt}
\end{equation}
It is well-known that the following estimates for $g_{t}$ hold: there
exists some constant $C_{13}=C_{13}(d,\alpha,\kappa_{0})>1$ such
that
\begin{equation}
C_{13}^{-1}t\left(t^{1/\alpha}+|x|\right)^{-d-\alpha}\le g_{t}(x)\le C_{13}t\left(t^{1/\alpha}+|x|\right)^{-d-\alpha}\label{twosidebound:g_t}
\end{equation}
for all $x\in\Rd$ and $t>0$. Moreover, for each $k\in\mathbb{N}$,
we can find a constant $C_{14}=C_{14}(d,\alpha,\kappa_{0},k)>0$ such
that
\begin{equation}
|\nabla^{k}g_{t}(x)|\le C_{14}t\left(t^{1/\alpha}+|x|\right)^{-d-\alpha-k}\label{bound:gradientg_t}
\end{equation}
for all $x\in\Rd$ and $t>0$, see \cite[Lemma 2.2]{chen2015heat}.

We next show that the same estimate as in (\ref{twosidebound:g_t})
is also true for the density $f_{t}$. For $|\nabla f_{t}|$ we shall
derive an estimate that is slightly worse than the estimate on $|\nabla g_{t}|$
given in (\ref{bound:gradientg_t}). As the first step, we have the
following upper estimate that is actually a special case of \cite[Theorem 1]{MR2794975}.
\begin{lem}
\emph{(\cite{MR2794975}) }Let $f_{t}$ be as in \emph{(\ref{MPAFLsect312})}.
Then there exists some constant $C_{15}=C_{15}(d,\alpha,\kappa_{0},\kappa_{1})>0$
such that
\begin{equation}
f_{1}(x)\le C_{15}\left(1\wedge|x|{}^{-d-\alpha}\right),\quad x\in\Rd.\label{esti f_1}
\end{equation}
\label{MPAFLlemma1}
\end{lem}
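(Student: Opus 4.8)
The plan is to reduce the claim to the known upper estimates for $\alpha$-stable densities by exploiting the decomposition $f_{t}=g_{t}*\tilde f_{t}$ together with a scaling argument. We only need the bound at time $t=1$ because the general-time version follows by the scaling of $g_t$ once the convolution structure is in place; in fact the present lemma is exactly the special case of \cite[Theorem~1]{MR2794975} obtained by taking the L\'evy density there to be $K(h)$, which satisfies the comparability condition \eqref{condition1forK}. So at the most economical level the proof is a single sentence: apply \cite[Theorem~1]{MR2794975} to the L\'evy process $Z$, whose jumping kernel $K(h)|h|^{0}$ (i.e. $K$ itself) is bounded above by $\kappa_1|h|^{-d-\alpha}$ and below by $\kappa_0|h|^{-d-\alpha}$, and read off \eqref{esti f_1}.

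If instead one wants a self-contained argument, I would proceed as follows. First, record that $Z$ has the same small-jump behaviour as a rotationally symmetric $\alpha$-stable process and a L\'evy measure that is globally comparable to $|h|^{-d-\alpha}\mathrm{d}h$; in particular \eqref{MPAFLsect31} gives the coercivity bound $\Re\psi(u)\ge C_{12}|u|^{\alpha}$. Second, split $f_1$ according to whether $|x|\le 1$ or $|x|>1$. For $|x|\le 1$ one only needs an $L^\infty$ bound: from \eqref{MPAFLsect31-1} we have $|e^{-\psi(u)}|\le e^{-C_{12}|u|^{\alpha}}$, and plugging this into \eqref{MPAFLsect312} gives $\|f_1\|_\infty\le (2\pi)^{-d}\int_{\Rd}e^{-C_{12}|u|^{\alpha}}\mathrm{d}u<\infty$, which is the desired constant on $|x|\le 1$. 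Third, for $|x|>1$ one needs the polynomial decay $f_1(x)\le C_{15}|x|^{-d-\alpha}$. Here the standard route is to differentiate: since $\Re\psi$ grows like $|u|^\alpha$ while its derivatives in $u$ stay integrable against $e^{-c|u|^\alpha}$ up to order $d+1$ (this uses that $K$ is comparable to the stable kernel, so $\psi$ and $g$'s exponent $|u|^\alpha$ have matching homogeneity, with the remainder $\tilde\psi$ contributing lower-order terms), repeated integration by parts in \eqref{MPAFLsect312} against $e^{-iu\cdot x}$ produces factors of $|x|^{-(d+\alpha)}$ (interpolating between $\lfloor d+\alpha\rfloor$ and $\lceil d+\alpha\rceil$ derivatives), yielding $|x|^{d+\alpha}f_1(x)\le C_{15}$.

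The main obstacle is the decay estimate for $|x|>1$: one must control the $u$-derivatives of $e^{-\psi(u)}$, and $\psi$ is only $C^\infty$ away from the origin while near $u=0$ its regularity is governed by $\alpha$ (for $\alpha<1$ there is no drift-correction, for $\alpha=1$ the cancellation \eqref{condition2forK_a=00003D00003D1} is exactly what keeps $\psi$ well-behaved, and for $\alpha>1$ one has a genuine gradient term). Handling this uniformly in $\alpha\in(0,2)$ is precisely the content of \cite[Theorem~1]{MR2794975}, which is why I would simply invoke that theorem rather than reprove it; the role of \eqref{condition2forK_a=00003D00003D1} in the $\alpha=1$ case is to guarantee that the hypotheses of that theorem are met. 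Everything else---the $|x|\le 1$ bound and the combination into $1\wedge|x|^{-d-\alpha}$---is routine.
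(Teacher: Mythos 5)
Your primary approach coincides with the paper's: both invoke \cite[Theorem~1]{MR2794975} after noting the coercivity bound \eqref{MPAFLsect31}. However, you gloss over a technical point that the paper handles explicitly. The conclusion of that theorem is not literally \eqref{esti f_1}; because the compensator $\chi_\alpha(h)$ used in \eqref{MPAFLsect312} differs from the normalization in Sztonyk's paper (except when $\alpha=1$), what one actually obtains is an estimate for a \emph{shifted} density, namely $f_1(x+v)\le c_1(1\wedge|x|^{-d-\alpha})$ where $v$ is a deterministic vector ($v=-\int_{|h|\ge1}hK(h)\mathrm{d}h$ for $\alpha>1$, $v=0$ for $\alpha=1$, $v=\int_{0<|h|<1}hK(h)\mathrm{d}h$ for $\alpha<1$). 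One then needs the observation that \eqref{condition1forK} forces $|v|\le c_2(d,\alpha,\kappa_1)$, so the shift can be absorbed into the constant and \eqref{esti f_1} follows. Your phrase ``apply \dots and read off'' skips this step; the lemma is not \emph{exactly} the statement of the cited theorem but follows from it after controlling $v$. Your alternative Fourier-inversion sketch is a reasonable heuristic, but as you yourself note, carrying out the integration-by-parts argument uniformly in $\alpha\in(0,2)$ (where $\psi$ is only $C^\alpha$-type near $u=0$, and a fractional number $d+\alpha$ of derivatives must be extracted) is exactly what the cited theorem does, so that route does not give an independent proof here.
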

\begin{proof}
Note that (\ref{MPAFLsect31}) is true. The assertion thus follows
by \cite[Theorem 1]{MR2794975}. Indeed, to apply \cite[Theorem 1]{MR2794975},
we only need to take $\mu$ as the surface measure $\mathrm{d}S_{1}$
on $\partial B_{1}$, $q(\cdot)\equiv\kappa_{1}$, $\phi(\cdot)\text{\ensuremath{\equiv}}1$,
$\beta=\alpha$, $\gamma=d$, and $k_{1}=k_{2}=1$ there. Then we
obtain
\begin{equation}
f_{1}(x+v)\le c_{1}\left(1\wedge|x|{}^{-d-\alpha}\right),\quad\forall x\in\Rd,\label{esti f_1 old}
\end{equation}
where $c_{1}=c_{1}(d,\alpha,\kappa_{0},\kappa_{1})>0$ is a constant
and the vector $v\in\Rd$ is defined by
\[
v:=\begin{cases}
-\int_{|z|\ge1}hK(h)\mathrm{d}h, & 1<\alpha<2,\\
0, & \alpha=1,\\
\int_{0<|z|<1}hK(h)\mathrm{d}h, & 0<\alpha<1.
\end{cases}
\]
It follows from (\ref{condition1forK}) that $|v|\le c_{2}$, where
$c_{2}=c_{2}(d,\alpha,\kappa_{1})>0$ is a constant. The estimate
(\ref{esti f_1}) now follows from (\ref{esti f_1 old}).
\end{proof}
\begin{lem}
Let $C_{15}$ be as in Lemma \ref{MPAFLlemma1}. Then we have
\begin{equation}
f_{t}(x)\le C_{15}t\left(t^{1/\alpha}+|x|\right)^{-d-\alpha},\quad x\in\Rd,\,t>0.\label{upperbound:f_t}
\end{equation}
Moreover, there exists some constant $C_{16}=C_{16}(d,\alpha,\kappa_{0},\kappa_{1})>0$
such that
\begin{equation}
|\nabla f_{t}(x)|\le C_{16}t^{1-1/\alpha}\left(t^{1/\alpha}+|x|\right)^{-d-\alpha}\label{upperbound:gradientf_t}
\end{equation}
 for all $x\in\Rd$ and $t>0$. \label{lem: upper esti for f_t}
\end{lem}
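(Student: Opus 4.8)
The plan is to obtain the upper bound \eqref{upperbound:f_t} from the $t=1$ estimate of Lemma \ref{MPAFLlemma1} by a scaling argument, and then to deduce the gradient bound \eqref{upperbound:gradientf_t} from \eqref{upperbound:f_t} together with the smoothing properties of the symmetric $\alpha$-stable density $g_{t}$.

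First I would fix $t>0$, set $a:=t^{-1/\alpha}$ (so that $a^{-\alpha}=t$), and pass to the rescaled L\'evy process $Z^{(a)}_{s}:=aZ_{a^{-\alpha}s}=t^{-1/\alpha}Z_{ts}$, $s\ge 0$. Computing its characteristic exponent exactly as in the proof of Lemma \ref{claim:(Rescaling)} (the constant-coefficient situation here is even simpler; in the case $\alpha=1$ one invokes the spherical cancellation \eqref{condition2forK_a=00003D00003D1} to absorb the change of the truncation region from $\{|h|\le 1\}$ to $\{|h|\le a\}$), one sees that $Z^{(a)}$ is again a stable-like L\'evy process, with L\'evy kernel $K^{(a)}(h)=a^{-d-\alpha}K(h/a)$. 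By \eqref{condition1forK}, $K^{(a)}$ satisfies \eqref{condition1forK} with the \emph{same} constants $\kappa_{0},\kappa_{1}$, and, when $\alpha=1$, also \eqref{condition2forK_a=00003D00003D1}; hence Lemma \ref{MPAFLlemma1} applies to $Z^{(a)}$ with the same constant $C_{15}$. Since the density of $Z^{(a)}_{1}=t^{-1/\alpha}Z_{t}$ equals $f^{(a)}_{1}(x)=t^{d/\alpha}f_{t}(t^{1/\alpha}x)$, this reads $t^{d/\alpha}f_{t}(t^{1/\alpha}x)\le C_{15}\bigl(1\wedge|x|^{-d-\alpha}\bigr)$ for all $x\in\Rd$. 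Replacing $x$ by $t^{-1/\alpha}z$ then gives $f_{t}(z)\le C_{15}\bigl(t^{-d/\alpha}\wedge t|z|^{-d-\alpha}\bigr)$ for all $z\in\Rd$, which is \eqref{upperbound:f_t} because $t^{-d/\alpha}\wedge t|z|^{-d-\alpha}\asymp t\bigl(t^{1/\alpha}+|z|\bigr)^{-d-\alpha}$.

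For the gradient bound I would use that $g_{t}\in C^{\infty}$ and $f_{t}=g_{t}*\tilde f_{t}$ with $\tilde f_{t}$ a probability density, so that $\nabla f_{t}=(\nabla g_{t})*\tilde f_{t}$. Combining \eqref{bound:gradientg_t} with $k=1$ and the lower bound in \eqref{twosidebound:g_t} yields $|\nabla g_{t}(w)|\le C_{13}C_{14}\bigl(t^{1/\alpha}+|w|\bigr)^{-1}g_{t}(w)\le C_{13}C_{14}\,t^{-1/\alpha}g_{t}(w)$ for every $w\in\Rd$. Therefore
\[
|\nabla f_{t}(x)|\le\int_{\Rd}|\nabla g_{t}(x-y)|\,\tilde f_{t}(y)\,\mathrm{d}y\le C_{13}C_{14}\,t^{-1/\alpha}\int_{\Rd}g_{t}(x-y)\,\tilde f_{t}(y)\,\mathrm{d}y=C_{13}C_{14}\,t^{-1/\alpha}f_{t}(x),
\]
and \eqref{upperbound:gradientf_t} follows upon inserting the already-established \eqref{upperbound:f_t}, with $C_{16}$ a suitable multiple of $C_{13}C_{14}C_{15}$.

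I do not expect a genuine obstacle: the analytic work is already contained in Lemma \ref{MPAFLlemma1} (that is, in \cite[Theorem 1]{MR2794975}) and in the classical estimates \eqref{twosidebound:g_t}--\eqref{bound:gradientg_t}. The one step that must be carried out with some care is the verification that the rescaled L\'evy process $Z^{(a)}$ still obeys \eqref{condition1forK}--\eqref{condition2forK_a=00003D00003D1} with constants that do not depend on $t$ --- in particular, in the case $\alpha=1$, that rescaling the truncation region does not introduce a spurious linear term in the characteristic exponent --- so that Lemma \ref{MPAFLlemma1} can be invoked uniformly in $t$; this is precisely where hypothesis \eqref{condition2forK_a=00003D00003D1} is used.
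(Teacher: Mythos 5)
Your proof is correct. For the first estimate \eqref{upperbound:f_t}, you use the same scaling argument as the paper: set $a=t^{-1/\alpha}$, verify that the rescaled L\'evy kernel still satisfies \eqref{condition1forK} (and \eqref{condition2forK_a=00003D00003D1} when $\alpha=1$) with the same constants, apply Lemma \ref{MPAFLlemma1}, and unwind the scaling. This is essentially word-for-word the paper's argument.

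For the gradient estimate \eqref{upperbound:gradientf_t}, your route is genuinely a bit cleaner than the paper's, though it starts from the same decomposition $f_{t}=g_{t}*\tilde f_{t}$. The paper bounds $|\nabla g_{t}|$ and $\tilde f_{t}$ separately by the kernel estimates \eqref{bound:gradientg_t} and \eqref{upperbound:h_t}, and then invokes a convolution inequality for the two power-law kernels to conclude. You instead observe that \eqref{bound:gradientg_t} and the \emph{lower} bound in \eqref{twosidebound:g_t} together give the pointwise comparison $|\nabla g_{t}(w)|\le C\,t^{-1/\alpha}g_{t}(w)$, after which the convolution is computed exactly: $|\nabla f_{t}(x)|\le C\,t^{-1/\alpha}(g_{t}*\tilde f_{t})(x)=C\,t^{-1/\alpha}f_{t}(x)$, and the already-established bound on $f_{t}$ finishes it. This avoids any convolution-kernel estimate at the cost of using the two-sided bound on $g_{t}$ rather than only its upper bound; since \eqref{twosidebound:g_t} is a classical fact already cited in the paper, nothing is lost and the argument is a bit more transparent.
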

\begin{proof}
Let $a>0$ and define $Y_{t}:=aZ_{a^{-\alpha}t}$, $t\ge0$. Then
$(Y_{t})$ is a Lévy process and for $u\in\Rd,$
\begin{align*}
\bfE\big[e^{iY_{t}\cdot u}\big] & =\bfE\big[e^{iaL_{a^{-\alpha}t}\cdot u}\big]\\
 & =\exp\left(ta^{-\alpha}\int_{\mathbb{R}^{d}\setminus\{0\}}\Big(e^{iau\cdot h}-1-\chi_{\alpha}(h)iau\cdot h\Big)K(h)\mathrm{d}h\right)
\end{align*}
By (\ref{condition2forK_a=00003D00003D1}) and a change of variables,
we obtain
\[
\bfE\big[e^{iY_{t}\cdot u}\big]=\exp\left(\int_{\mathbb{R}^{d}\setminus\{0\}}\Big(e^{iu\cdot h}-1-\chi_{\alpha}(h)iu\cdot h\Big)a^{-d-\alpha}K(a^{-1}h)\mathrm{d}h\right),\quad u\in\Rd.
\]
Set $M(h):=a^{-d-\alpha}K(a^{-1}h)$, $h\in\Rd$. Then the function
$M$ satisfies

\begin{equation}
\frac{\kappa_{0}}{|h|^{d+\alpha}}\le M(h)\le\frac{\kappa_{1}}{|h|^{d+\alpha}},\quad h\in\mathbb{R}^{d},\label{condition1forM}
\end{equation}
where the positive constants $\kappa_{0}$ and $\kappa_{1}$ are the
same as in (\ref{condition1forK}). Therefore, $(Y_{t})$ is also
a stable-like Lévy process. Let $\rho(x),$ $x\in\Rd,$ be the probability
density of $Y_{1}$. By choosing $a$ such that $a^{-\alpha}=t$,
we obtain $Y_{1}=t^{-1/\alpha}Z_{t}$, which implies $\rho(x)=t^{d/\alpha}f_{t}(t^{1/\alpha}x),\ x\in\Rd.$
It follows from Lemma \ref{MPAFLlemma1} that $t^{d/\alpha}f_{t}(t^{1/\alpha}x)\le C_{15}\left(1\wedge|x|{}^{-d-\alpha}\right),\ x\in\Rd.$
So (\ref{upperbound:f_t}) is true.

Next, we will use the fact that $f_{t}=g_{t}*\tilde{f}_{t}$ to show
(\ref{upperbound:gradientf_t}). Since $\tilde{f}_{t}$ is the density
of $\tilde{L}_{t}$ and $(\tilde{L}_{t})$ is a stable-like Lévy process
with the jump kernel $\tilde{K}$ that satisfies (\ref{bound:K_1})
and (\ref{conditon 2: K tilde}), we obtain, using (\ref{upperbound:f_t}),
the existence of a constant $\tilde{C}_{15}=\tilde{C}_{15}(d,\alpha,\kappa_{0},\kappa_{1})>0$
such that
\begin{equation}
\tilde{f}_{t}(x)\le\tilde{C}_{15}t\left(t^{1/\alpha}+|x|\right)^{-d-\alpha},\quad x\in\Rd,t>0.\label{upperbound:h_t}
\end{equation}
Note that $\nabla f_{t}=(\nabla g_{t})*\tilde{f}_{t}.$ By (\ref{bound:gradientg_t}),
we get that for all $x\in\Rd$ and $t>0$,
\begin{align*}
|\nabla f_{t}(x)| & \le\int_{\Rd}|\nabla g_{t}(x-h)|\tilde{f}_{t}(h)\mathrm{d}h\\
 & \le C_{14}\tilde{C}_{15}\int_{\Rd}t\left(t^{1/\alpha}+|x-h|\right)^{-d-\alpha-1}t\left(t^{1/\alpha}+|h|\right)^{-d-\alpha}\mathrm{d}h\\
 & \le C_{16}t^{1-1/\alpha}\left(t^{1/\alpha}+|x|\right)^{-d-\alpha}.
\end{align*}
This completes the proof.
\end{proof}
By (\ref{upperbound:f_t}) and the same argument as in \cite[Lemma 2.3]{chen2015heat},
we easily obtain the following corollary.
\begin{cor}
There exists a constant $C_{17}=C_{17}(d,\alpha,\kappa_{0},\kappa_{1})>0$
such that
\begin{equation}
|f_{t}(x)-f_{t}(x')|\le C_{17}\left(\left(t^{-1/\alpha}|x-x'|\right)\wedge1\right)\left\{ \varrho_{\alpha}^{0}(t,x)+\varrho_{\alpha}^{0}(t,x')\right\} \label{Coro 1, eq}
\end{equation}
for all $x,x'\in\Rd$ and $t>0$.
\end{cor}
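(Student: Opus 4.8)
The plan is to derive the Hölder estimate for $f_t$ from the upper bound \eqref{upperbound:f_t} together with the gradient estimate \eqref{upperbound:gradientf_t}, exactly as in \cite[Lemma 2.3]{chen2015heat}. First I would consider two regimes depending on the relative size of $|x-x'|$ and $t^{1/\alpha}$.

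\textbf{Case 1: $|x-x'| \ge t^{1/\alpha}$.} Here the factor $(t^{-1/\alpha}|x-x'|)\wedge 1$ equals $1$, so it suffices to bound $|f_t(x)-f_t(x')|$ by $c(\varrho_\alpha^0(t,x)+\varrho_\alpha^0(t,x'))$. This follows immediately from the triangle inequality and \eqref{upperbound:f_t}, since $\varrho_\alpha^0(t,x) = t(t^{1/\alpha}+|x|)^{-d-\alpha}$ and \eqref{upperbound:f_t} reads $f_t(x) \le C_{15}\varrho_\alpha^0(t,x)$.

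\textbf{Case 2: $|x-x'| < t^{1/\alpha}$.} Now $(t^{-1/\alpha}|x-x'|)\wedge 1 = t^{-1/\alpha}|x-x'|$. I would write $f_t(x)-f_t(x') = \int_0^1 \nabla f_t(x' + s(x-x'))\cdot(x-x')\,\mathrm ds$ and apply \eqref{upperbound:gradientf_t} to get
\[
|f_t(x)-f_t(x')| \le C_{16}|x-x'|\,t^{1-1/\alpha}\int_0^1 \left(t^{1/\alpha}+|x'+s(x-x')|\right)^{-d-\alpha}\mathrm ds.
\]
Since $|x-x'| < t^{1/\alpha}$, for every $s\in[0,1]$ the point $x'+s(x-x')$ satisfies $|(x'+s(x-x')) - x| \le t^{1/\alpha} \le (2t^{1/\alpha})\vee(|x|/2)$, so inequality \eqref{ineq 1: chen} (with $z = s(x-x') + (x'-x)$, $\gamma = d+\alpha$) yields $(t^{1/\alpha}+|x'+s(x-x')|)^{-d-\alpha} \le 4^{d+\alpha}(t^{1/\alpha}+|x|)^{-d-\alpha}$. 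Plugging this in gives $|f_t(x)-f_t(x')| \le c\,t^{-1/\alpha}|x-x'|\cdot t(t^{1/\alpha}+|x|)^{-d-\alpha} = c\,(t^{-1/\alpha}|x-x'|)\varrho_\alpha^0(t,x)$, which is dominated by the right-hand side of \eqref{Coro 1, eq}.

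Combining the two cases and absorbing constants into $C_{17} = C_{17}(d,\alpha,\kappa_0,\kappa_1)$ proves the estimate. The only point requiring slight care is the application of \eqref{ineq 1: chen} along the segment in Case 2 — one must check the hypothesis $|z| \le (2t^{1/\alpha})\vee(|x|/2)$ is met for the relevant shift, which holds precisely because we are in the regime $|x-x'| < t^{1/\alpha}$; the rest is a routine combination of the already-established bounds, so I do not anticipate a genuine obstacle.
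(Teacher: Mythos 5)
Your proof is correct and takes essentially the same two-regime approach the paper implicitly references via \cite[Lemma 2.3]{chen2015heat}: the triangle inequality with (\ref{upperbound:f_t}) when $|x-x'|\ge t^{1/\alpha}$, and the mean value theorem combined with the gradient bound (\ref{upperbound:gradientf_t}) and inequality (\ref{ineq 1: chen}) when $|x-x'|<t^{1/\alpha}$. (The paper's one-line justification cites only (\ref{upperbound:f_t}), but the gradient estimate (\ref{upperbound:gradientf_t}) is indeed what drives the second regime, exactly as you use it, and its weaker form relative to (\ref{bound:gradientg_t}) is precisely sufficient for the claimed bound.)
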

\begin{lem}
There exists some constant $C_{18}=C_{18}(d,\alpha,\kappa_{0},\kappa_{1})>0$
such that
\[
f_{t}(x)\ge C_{18}t\left(t^{1/\alpha}+|x|\right)^{-d-\alpha},\quad\forall x\in\Rd,t>0.
\]
\label{lem: lower esti for f_t}
\end{lem}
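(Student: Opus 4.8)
The plan is to establish the lower bound by the standard two-step strategy: first prove an ``on-diagonal'' lower bound $f_t(x) \ge c\, t^{-d/\alpha}$ for $|x| \le \lambda t^{1/\alpha}$ (with a suitable constant $\lambda$), and then ``spread'' this via a chaining/convolution argument to obtain the full off-diagonal estimate. By the scaling identity $\rho(x) = t^{d/\alpha} f_t(t^{1/\alpha}x)$ established in the proof of Lemma~\ref{lem: upper esti for f_t} (where $\rho$ is the density of $Y_1 = t^{-1/\alpha}Z_t$ and $(Y_t)$ is again a stable-like L\'evy process with jump kernel $M$ satisfying \eqref{condition1forM} and, when $\alpha=1$, the analogue of \eqref{condition2forK_a=00003D00003D1}), it suffices to prove
\[
f_1(x) \ge c_0 \qquad \text{for all } |x| \le 1,
\]
with $c_0$ depending only on $d,\alpha,\kappa_0,\kappa_1$, and separately to handle $|x| \ge 1$.

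First I would prove $f_1(0) > 0$. Since $f_1 = g_1 * \tilde f_1$ with $g_1$ the (strictly positive, continuous) density of a rotationally symmetric $\alpha$-stable process and $\tilde f_1 \ge 0$ a probability density, we have $f_1(0) = \int_{\Rd} g_1(-h)\tilde f_1(h)\,\mathrm{d}h > 0$; more quantitatively, using the lower bound in \eqref{twosidebound:g_t} for $g_1$ together with the fact that $\int_{|h| \le R}\tilde f_1(h)\,\mathrm{d}h \ge 1/2$ for $R$ large enough (which follows from the upper bound \eqref{upperbound:h_t} by choosing $R$ with $\tilde C_{15}\int_{|h|>R}(1+|h|)^{-d-\alpha}\,\mathrm{d}h \le 1/2$), one gets $f_1(0) \ge c_1(\lambda)$ with an \emph{explicit} constant depending only on the stated parameters. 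Next, to upgrade $f_1(0) \ge c_1$ to $f_1(x) \ge c_0$ on the whole unit ball, I would use the H\"older-type continuity estimate \eqref{Coro 1, eq} (the Corollary above): for $|x| \le 1$,
\[
|f_1(x) - f_1(0)| \le C_{17}\big(|x| \wedge 1\big)\big\{\varrho_\alpha^0(1,x) + \varrho_\alpha^0(1,0)\big\} \le C_{17}' |x|,
\]
so there is $r_0 = r_0(d,\alpha,\kappa_0,\kappa_1) \in (0,1]$ with $f_1(x) \ge c_1/2$ for $|x| \le r_0$. This gives the on-diagonal bound (after unscaling) for $|x| \le r_0 t^{1/\alpha}$.

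To pass from the small ball $|x| \le r_0 t^{1/\alpha}$ to all $|x| \le t^{1/\alpha}$, and then to $|x| \ge t^{1/\alpha}$, I would use the Chapman--Kolmogorov identity $f_t = f_{t/2} * f_{t/2}$: for general $x$,
\[
f_t(x) = \int_{\Rd} f_{t/2}(x-h) f_{t/2}(h)\,\mathrm{d}h \ge \int_{\{|h - x/2| \le r_0 (t/2)^{1/\alpha}\} \cap \{|h + (x-x/2) \text{ small}\}} \cdots
\]
— more precisely, when $|x| \ge t^{1/\alpha}$ one cannot reach $x$ in one ``on-diagonal'' step, so the idea is: in the first half-step the process makes one big jump of size comparable to $|x|$ landing near $x$ (contributing a factor comparable to the upper-bound order $(t/2)\,|x|^{-d-\alpha}$ from the lower bound on the jump intensity, obtained by restricting the integral defining $f_{t/2}$ over a region where a single jump of the required size occurs — this is where the lower bound \eqref{condition1forK} on $K$ enters), and in the second half-step it stays on-diagonal. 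Concretely, one writes $f_t(x) \ge \int_{A} f_{t/2}(h) f_{t/2}(x-h)\,\mathrm{d}h$ with $A = \{h : |h| \le r_0(t/2)^{1/\alpha}\}$; on $A$ we have $f_{t/2}(h) \ge c(t/2)^{-d/\alpha}$ by the on-diagonal bound, and $|x - h| \asymp |x|$, so it remains to show $\int_A f_{t/2}(x-h)\,\mathrm{d}h \gtrsim t\,|x|^{-d-\alpha} \cdot (t/2)^{d/\alpha}$, i.e., a lower bound on the probability that $Z_{t/2}$ lands in a ball of radius $\asymp t^{1/\alpha}$ around $x$. This last fact is a Lévy-system / one-big-jump estimate: using that the Lévy measure of $Z$ dominates $\kappa_0 |h|^{-d-\alpha}\mathrm{d}h$, the probability of a single jump from a neighborhood of $0$ into a neighborhood of $x$ during $[0,t/2]$ is at least $c\,(t/2)\cdot|x|^{-d-\alpha}\cdot t^{d/\alpha}$, combined with the exit-time estimate of Lemma~\ref{lem: esti exit time of a ball} (applied to the Lévy process, i.e.\ with $n$ constant) to guarantee the process does not leave the relevant small balls before and after the jump. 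Together with $t^{1/\alpha}+|x| \asymp |x|$ in this regime, this yields $f_t(x) \ge C_{18}\,t(t^{1/\alpha}+|x|)^{-d-\alpha}$.

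The main obstacle is the last step: making the one-big-jump lower bound rigorous for a general stable-like Lévy process (rather than a genuinely $\alpha$-stable one) without a clean scaling structure for $\tilde Z$, especially in the delicate case $\alpha = 1$ where the cutoff $\chi_1$ and condition \eqref{condition2forK_a=00003D00003D1} must be carefully tracked through the Lévy-system computation. I expect this to be handled exactly as in the corresponding argument for symmetric stable-like processes in \cite{chen2015heat} (or as in \cite{MR2095633,MR2508568}), using the Lévy system formula for $Z$ together with the exit-time bound of Lemma~\ref{lem: esti exit time of a ball}, since those arguments only use the two-sided bound \eqref{condition1forK} on the jump kernel and the already-established upper bound \eqref{upperbound:f_t}, none of which requires symmetry in $h$.
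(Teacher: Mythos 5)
Your overall strategy (on-diagonal bound, then chaining via Chapman--Kolmogorov with a one-big-jump L\'evy-system estimate for the off-diagonal regime) is a standard and in principle workable route, but it is substantially more complicated than the paper's argument and, more importantly, you leave its hardest step unresolved: you explicitly identify the one-big-jump lower bound for a general (non-scaling, possibly asymmetric) stable-like L\'evy process as ``the main obstacle'' and then defer it to \cite{chen2015heat,MR2095633,MR2508568} rather than carrying it out. Since that estimate is precisely the content one must prove here, the proposal as written has a genuine gap at the off-diagonal step.

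The key simplification you miss is that the decomposition $f_t = g_t * \tilde f_t$ already contains the off-diagonal lower bound ``for free,'' because $g_t$ is a genuine rotationally symmetric $\alpha$-stable density and therefore satisfies the sharp two-sided estimate \eqref{twosidebound:g_t} in \emph{both} regimes, including $g_t(x-y) \ge C_{13}^{-1} t |x-y|^{-d-\alpha}$. Consequently no chaining, no Chapman--Kolmogorov iteration, and no L\'evy-system computation on $Z$ are needed at all. The only probabilistic input required is the mass-retention estimate $\mathbf{P}\bigl(\sup_{s\le t}|\tilde Z_s| < c_2 t^{1/\alpha}\bigr) \ge 1/2$, which follows immediately from the exit-time bound of Lemma~\ref{lem: esti exit time of a ball} applied to the L\'evy process $\tilde Z$ (whose kernel $\tilde K$ satisfies \eqref{bound:K_1} and, if $\alpha=1$, \eqref{conditon 2: K tilde}). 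Concretely, for $|x|\le c_2 t^{1/\alpha}$ one bounds $f_t(x)\ge \int_{B_{2c_2 t^{1/\alpha}}(x)} g_t(x-y)\tilde f_t(y)\,\mathrm{d}y \ge c\,t^{-d/\alpha}\mathbf{P}(\tilde Z_t\in B_{c_2 t^{1/\alpha}}) \ge c' t^{-d/\alpha}$; and for $|x|>c_2 t^{1/\alpha}$ one restricts to $y\in B_{2^{-1}c_2 t^{1/\alpha}}$, where $|x-y|\asymp|x|$, to get $f_t(x)\ge c\,t|x|^{-d-\alpha}\int_{B_{2^{-1}c_2 t^{1/\alpha}}}\tilde f_t(y)\,\mathrm{d}y\ge c'\,t|x|^{-d-\alpha}$. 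This handles the delicate $\alpha=1$ case without any additional care, since conditions \eqref{bound:K_1}--\eqref{conditon 2: K tilde} are exactly what Lemma~\ref{lem: esti exit time of a ball} requires. Your use of the H\"older estimate \eqref{Coro 1, eq} to extend $f_1(0)>0$ to a small ball is also unnecessary under this approach, as the on-diagonal bound is obtained uniformly over the ball directly.
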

\begin{proof}
We will use the fact that $f_{t}=g_{t}*\tilde{f}_{t}$ to show this
lemma. According to Lemma \ref{lem: esti exit time of a ball}, there
exists some constant $c_{1}=c_{1}(d,\alpha,\kappa_{0},\kappa_{1})>0$
such that
\begin{equation}
\mathbf{P}\left(\tilde{\tau}_{B_{r}}\le t\right)\le c_{1}tr{}^{-\alpha},\quad\forall r>0,\label{ineq:uniformboundtau}
\end{equation}
where $\tilde{\tau}_{B_{r}}:=\inf\left\{ t\ge0:\ \tilde{Z}_{t}\notin B_{r}\right\} $.
Choose $c_{2}>0$ such that
\begin{equation}
(2^{-1}c_{2})^{\alpha}=2c_{1}.\label{eq:c_2}
\end{equation}
If $|x|\le c_{2}t^{1/\alpha}$, then
\begin{align*}
f_{t}(x) & \ge\int_{B_{2c_{2}t^{1/\alpha}}(x)}g_{t}(x-y)\tilde{f}_{t}(y)\mathrm{d}y\ge c_{3}t^{-d/\alpha}\int_{B_{2c_{2}t^{1/\alpha}}(x)}\tilde{f}_{t}(y)\mathrm{d}y\\
 & \ge c_{3}t^{-d/\alpha}\int_{B_{c_{2}t^{1/\alpha}}}\tilde{f}_{t}(y)\mathrm{d}y=c_{3}t^{-d/\alpha}\mathbf{P}\left(\tilde{Z}_{t}\in B_{c_{2}t^{1/\alpha}}\right)\\
 & \ge c_{3}t^{-d/\alpha}\mathbf{P}\left(\sup_{0\le s\le t}|\tilde{Z}_{s}|<c_{2}t^{1/\alpha}\right)=c_{3}t^{-d/\alpha}\left(1-\mathbf{P}\left(\sup_{0\le s\le t}|\tilde{Z}_{s}|\ge c_{2}t^{1/\alpha}\right)\right)\\
 & =c_{3}t^{-d/\alpha}\left(1-\mathbf{P}\left(\tilde{\tau}_{B_{c_{2}t^{1/\alpha}}}\le t\right)\right)\overset{(\ref{ineq:uniformboundtau})}{\ge}c_{3}t^{-d/\alpha}\left(1-c_{1}t\left(c_{2}t^{1/\alpha}\right){}^{-\alpha}\right)\\
 & \overset{(\ref{eq:c_2})}{=}c_{4}t^{-d/\alpha}.
\end{align*}
If $|x|>c_{2}t^{1/\alpha}$, then
\begin{align*}
f_{t}(x) & \ge\int_{\Rd\setminus B_{2^{-1}c_{2}t^{1/\alpha}}(x)}g_{t}(x-y)\tilde{f}_{t}(y)\mathrm{d}y\ge c_{5}\int_{\Rd\setminus B_{2^{-1}c_{2}t^{1/\alpha}}(x)}\frac{t}{|x-y|^{d+\alpha}}\tilde{f}_{t}(y)\mathrm{d}y\\
 & \ge c_{5}\int_{\Rd\setminus B_{2^{-1}c_{2}t^{1/\alpha}}(x)}\frac{t}{|x-y|^{d+\alpha}}\tilde{f}_{t}(y)\mathrm{d}y\ge c_{5}\int_{B_{2^{-1}c_{2}t^{1/\alpha}}}\frac{t}{|x-y|^{d+\alpha}}\tilde{f}_{t}(y)\mathrm{d}y\\
 & \ge c_{6}\frac{t}{|x|^{d+\alpha}}\int_{B_{2^{-1}c_{2}t^{1/\alpha}}}\tilde{f}_{t}(y)\mathrm{d}y=c_{6}\frac{t}{|x|^{d+\alpha}}\left(1-\mathbf{P}\left(\tilde{\tau}_{B_{2^{-1}c_{2}t^{1/\alpha}}}\le t\right)\right)\\
 & \overset{(\ref{ineq:uniformboundtau})}{\ge}c_{6}\frac{t}{|x|^{d+\alpha}}\left(1-c_{1}t\left(2^{-1}c_{2}t^{1/\alpha}\right){}^{-\alpha}\right)\overset{(\ref{eq:c_2})}{=}c_{7}\frac{t}{|x|^{d+\alpha}}.
\end{align*}
 This completes the proof.
\end{proof}
Next, we derive some useful estimates for $f_{t}$. In the subsequent
proofs we will use very often the following identities: for $t>0$
and $x,h\in\Rd$,
\begin{equation}
g_{t}(x+h)-g_{t}(x)=\int_{0}^{1}\nabla g_{t}(x+rh)\cdot h\mathrm{d}r,\label{identity 1: frac esti f_t}
\end{equation}

\begin{equation}
g_{t}(x+h)-g_{t}(x)-h\cdot\nabla g_{t}(x)=\int_{0}^{1}\left(\int_{0}^{1}\nabla^{2}g_{t}(x+rr'h)\cdot rh\mathrm{d}r'\right)\cdot h\mathrm{d}r.\label{identity 2: frac esti f_t}
\end{equation}

For each $\alpha\in(0,2)$, it was proved in \cite[p. 282]{chen2015heat}
that there exists some constant $C_{19}=C_{19}(d,\alpha)>0$ such
that for all $0<t\le1$ and $x\in\Rd$,
\begin{equation}
\int_{\Rd}\left(\left(t^{-2/\alpha}|h|^{2}\right)\wedge1\right)\left(\varrho_{\alpha}^{0}(t,x+h)+\varrho_{\alpha}^{0}(t,x)\right)\cdot|h|^{-d-\alpha}\mathrm{d}h\le C_{19}\varrho_{0}^{0}(t,x).\label{Lemma 8, eq 3-1}
\end{equation}

\begin{lem}
Assume $\alpha\neq1$. Then there exists constant $C_{20}=C_{20}(d,\alpha,\kappa_{0},\kappa_{1})>0$
such that for all $0<t\le1$ and $x\in\Rd$,
\begin{equation}
\int_{\Rd}\left|f_{t}(x+h)-f_{t}(x)-\chi_{\alpha}(h)h\cdot\nabla f_{t}(x)\right|\cdot|h|^{-d-\alpha}\mathrm{d}h\le C_{20}\varrho_{0}^{0}(t,x).\label{esti:f_t,integral_differ}
\end{equation}
\label{lem:frac esti f_t}
\end{lem}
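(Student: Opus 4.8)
The plan is to use the convolution identity $f_{t}=g_{t}*\tilde{f}_{t}$ together with the already-established estimates for $g_{t}$ and its derivatives, reducing everything to the case of the rotationally symmetric stable density $g_{t}$, for which (\ref{Lemma 8, eq 3-1}) is available. First I would split the $h$-integral into the near region $\{|h|\le t^{1/\alpha}\}$ and the far region $\{|h|>t^{1/\alpha}\}$. On the far region one simply uses the triangle inequality $|f_{t}(x+h)-f_{t}(x)-\chi_\alpha(h)h\cdot\nabla f_t(x)|\le f_t(x+h)+f_t(x)+\chi_\alpha(h)|h|\,|\nabla f_t(x)|$, then invokes the upper bounds (\ref{upperbound:f_t}) and (\ref{upperbound:gradientf_t}), so that the integrand is dominated by a sum of terms of the form $\varrho_\alpha^{0}(t,x+h)|h|^{-d-\alpha}$, $\varrho_\alpha^{0}(t,x)|h|^{-d-\alpha}$ and (only when $\alpha>1$, where $\chi_\alpha\equiv 1$) $t^{1-1/\alpha}(t^{1/\alpha}+|x|)^{-d-\alpha}|h|^{1-d-\alpha}$; integrating over $|h|>t^{1/\alpha}$ produces a constant times $\varrho_0^{0}(t,x)$, using $\int_{|h|>t^{1/\alpha}}|h|^{-d-\alpha}\mathrm{d}h\asymp t^{-1}$ and $\int_{|h|>t^{1/\alpha}}|h|^{1-d-\alpha}\mathrm{d}h\asymp t^{(1-\alpha)/\alpha}$.

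On the near region one wants the second-order cancellation. Write $f_t=g_t*\tilde f_t$ and move the differences onto $g_t$: since convolution commutes with differencing,
\begin{align*}
f_{t}(x+h)-f_{t}(x)-\chi_{\alpha}(h)h\cdot\nabla f_{t}(x)
 =\int_{\Rd}\big[g_{t}(x+h-y)-g_{t}(x-y)-\chi_{\alpha}(h)h\cdot\nabla g_{t}(x-y)\big]\tilde f_{t}(y)\,\mathrm{d}y.
\end{align*}
For $\alpha>1$ ($\chi_\alpha\equiv1$) apply the Taylor identity (\ref{identity 2: frac esti f_t}) to $g_t$ and bound $|\nabla^2 g_t|$ by (\ref{bound:gradientg_t}) with $k=2$; for $\alpha<1$ ($\chi_\alpha\equiv0$) apply (\ref{identity 1: frac esti f_t}) and bound $|\nabla g_t|$ by (\ref{bound:gradientg_t}) with $k=1$. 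In either case, on $|h|\le t^{1/\alpha}$ and for $|y|\le (2t^{1/\alpha})\vee(|x|/2)$ the inequality (\ref{ineq 1: chen}) lets one replace $(t^{1/\alpha}+|x+rh-y|)^{-d-\alpha-k}$ by a constant multiple of $(t^{1/\alpha}+|x|)^{-d-\alpha-k}$, while for the remaining $y$ one keeps $(t^{1/\alpha}+|x-y|)$ and uses that $\tilde f_t(y)\le \tilde C_{15}\,\varrho_\alpha^{0}(t,y)\cdot t^{-1}\cdot$(\dots) is itself governed by the stable-type bound, so the $y$-convolution reduces to the estimate (\ref{Lemma 8, eq 3-1}) already recorded for $g_t$. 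Concretely, one bounds the whole near-region integral by $c\int_{\Rd}\big((t^{-2/\alpha}|h|^2)\wedge 1\big)\big(\varrho_\alpha^0(t,x+h)+\varrho_\alpha^0(t,x)\big)|h|^{-d-\alpha}\mathrm{d}h\le c\,C_{19}\varrho_0^0(t,x)$, possibly after first carrying out the $y$-integral exactly as in \cite[p.\ 282]{chen2015heat}.

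The main obstacle is the bookkeeping in the near region: one must handle the case split $\{|y|\le(2t^{1/\alpha})\vee(|x|/2)\}$ versus its complement carefully, so that on the "bad" set of $y$ the decay of $\tilde f_t(y)$ (from Lemma \ref{lem: upper esti for f_t}, applied to $\tilde Z$) compensates for the loss in (\ref{ineq 1: chen}), and so that after integrating in $y$ one genuinely lands on an expression of the form appearing in (\ref{Lemma 8, eq 3-1}) rather than something weaker. This is precisely the computation carried out for the stable kernel in \cite{chen2015heat}; here the only new point is that $\tilde f_t$ is a non-symmetric stable-like density, but since $\alpha\ne1$ there is no compensator issue in the kernel $\tilde K$ and the upper bounds (\ref{upperbound:f_t})–(\ref{upperbound:gradientf_t}) for $\tilde f_t$ have exactly the same form as for $g_t$, so the argument goes through verbatim. (The case $\alpha=1$, excluded here, is exactly where the missing symmetry would force the weaker estimates mentioned in the introduction.)
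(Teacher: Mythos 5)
Your proposal is correct and follows essentially the same strategy as the paper: exploit $f_t = g_t * \tilde f_t$ to put the second-order (resp.\ first-order, for $\alpha<1$) differences on the smooth kernel $g_t$, use Taylor's formula with the derivative bounds (\ref{bound:gradientg_t}) and (\ref{ineq 1: chen}), split $|h|$ at the scale $t^{1/\alpha}$, and integrate via (\ref{Lemma 8, eq 3-1}) or direct computation. The only organizational difference is that the paper first proves the target inequality entirely for $g_t$ and then transfers it to $f_t$ in one line through $\int_{\Rd}\varrho_0^0(t,x-y)\tilde f_t(y)\,\mathrm{d}y\le c\,\varrho_0^0(t,x)$ (which uses the lower bound of Lemma \ref{lem: lower esti for f_t} together with the semigroup identity $\tilde f_t*\tilde f_t=\tilde f_{2t}$), whereas you treat the far-$|h|$ region for $f_t$ directly via (\ref{upperbound:f_t})--(\ref{upperbound:gradientf_t}) and carry the $\tilde f_t$-convolution only through the near-region Taylor estimate; both routes close the argument.
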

\begin{proof}
The idea of proof is borrowed from \cite[Theorem 2.4]{chen2015heat}.
If we can find a constant $\tilde{C}_{20}=\tilde{C}_{20}(d,\alpha,\kappa_{0},\kappa_{1})>0$
such that for all $0<t\le1$ and $x\in\Rd$,
\begin{equation}
\int_{\Rd}\left|g_{t}(x+h)-g_{t}(x)-\chi_{\alpha}(h)h\cdot\nabla g_{t}(x)\right|\cdot|h|^{-d-\alpha}\mathrm{d}h\le\tilde{C}_{20}\varrho_{0}^{0}(t,x),\label{eq aim: frac esti f_t}
\end{equation}
 then the assertion follows from $f_{t}=g_{t}*\tilde{f}_{t}$ and
\begin{align*}
\int_{\Rd}\varrho_{0}^{0}(t,x-y)\tilde{f}_{t}(y)\mathrm{d}y & \le c_{1}t^{-1}\int_{\Rd}\tilde{f}_{t}(x-y)\tilde{f}_{t}(y)\mathrm{d}y\\
 & =c_{1}t^{-1}\tilde{f}_{2t}(x)\le c_{2}\varrho_{0}^{0}(t,x).
\end{align*}
 Next, we proceed to prove (\ref{eq aim: frac esti f_t}).

(i) We first consider the case $0<\alpha<1$. If $|h|\le1$, then
\begin{align*}
|g_{1}(x+h)-g_{1}(x)| & \overset{(\ref{identity 1: frac esti f_t})}{\le}|h|\int_{0}^{1}|\nabla g_{1}(x+rh)|\mathrm{d}r\\
 & \overset{(\ref{bound:gradientg_t})}{\le}c_{3}|h|\int_{0}^{1}(1+|x+rh|)^{-d-\alpha-1}\mathrm{d}r\\
 & \overset{(\ref{ineq 1: chen})}{\le}c_{4}|h|(1+|x|)^{-d-\alpha-1}\le c_{4}|h|(1+|x|)^{-d-\alpha}.
\end{align*}
So
\begin{equation}
|g_{1}(x+h)-g_{1}(x)|\le c_{5}\left(|h|\wedge1\right)\left(\varrho_{\alpha}^{0}(1,x+h)+\varrho_{\alpha}^{0}(1,x)\right).\label{new eq1: Lemma 9}
\end{equation}
By (\ref{scaling : gt}), we get
\[
|g_{t}(x+h)-g_{t}(x)|\le c_{5}\left(\left(t^{-1/\alpha}|h|\right)\wedge1\right)\left(\varrho_{\alpha}^{0}(t,x+h)+\varrho_{\alpha}^{0}(t,x)\right).
\]
Therefore,
\begin{align*}
 & \int_{\mathbb{R}^{d}}|g_{t}(x+h)-g_{t}(x)|\cdot|h|^{-d-\alpha}\mathrm{d}h\\
 & \quad\le c_{5}\int_{\mathbb{R}^{d}}\left(\left(t^{-1/\alpha}|h|\right)\wedge1\right)\varrho_{\alpha}^{0}(t,x+h)\cdot|h|^{-d-\alpha}\mathrm{d}h\\
 & \qquad+c_{5}\int_{\mathbb{R}^{d}}\left(\left(t^{-1/\alpha}|h|\right)\wedge1\right)\varrho_{\alpha}^{0}(t,x)\cdot|h|^{-d-\alpha}\mathrm{d}h=:I_{1}+I_{2}.
\end{align*}
 We have
\begin{align*}
 & I_{1}\le c_{5}t^{-1/\alpha}\int_{|h|\le t^{1/\alpha}}\varrho_{\alpha}^{0}(t,x+h)\cdot|h|^{-d-\alpha+1}\mathrm{d}h\\
 & \qquad+c_{5}\int_{|h|>t^{1/\alpha}}\varrho_{\alpha}^{0}(t,x+h)\cdot|h|^{-d-\alpha}\mathrm{d}h=:I_{11}+I_{12}.
\end{align*}
 Further,
\begin{align*}
 & I_{11}\le c_{5}t^{1-1/\alpha}\int_{|h|\le t^{1/\alpha}}\left(t^{1/\alpha}+|x+h|\right)^{-d-\alpha}\cdot|h|^{-d-\alpha+1}\mathrm{d}h\\
 & \quad\overset{(\ref{ineq 1: chen})}{\le}c_{6}t^{1-1/\alpha}\left(t^{1/\alpha}+|x|\right)^{-d-\alpha}\int_{|h|\le t^{1/\alpha}}|h|^{-d-\alpha+1}\mathrm{d}h\le c_{7}\varrho_{0}^{0}(t,x).
\end{align*}
If $|x|\le2t^{1/\alpha}$, then
\begin{align*}
 & I_{12}\le c_{5}t\int_{|h|>t^{1/\alpha}}\left(t^{1/\alpha}+|x+h|\right)^{-d-\alpha}\cdot|h|^{-d-\alpha}\mathrm{d}h\\
 & \quad\le c_{5}t^{-d/\alpha}\int_{|h|>t^{1/\alpha}}|h|^{-d-\alpha}\mathrm{d}h\le c_{8}t^{-1-d/\alpha}\le c_{9}\varrho_{0}^{0}(t,x);
\end{align*}
if $|x|>2t^{1/\alpha}$, then
\begin{align*}
 & I_{12}\le c_{5}\left(\int_{t^{1/\alpha}<|h|\le\frac{|x|}{2}}+\int_{|h|>\frac{|x|}{2}}\right)\varrho_{\alpha}^{0}(t,x+h)\cdot|h|^{-d-\alpha}\mathrm{d}h\\
 & \quad\le c_{5}t\int_{t^{1/\alpha}<|h|\le\frac{|x|}{2}}\left(t^{1/\alpha}+|x+h|\right)^{-d-\alpha}\cdot|h|^{-d-\alpha}\mathrm{d}h\\
 & \qquad+c_{10}|x|^{-d-\alpha}\int_{|h|>\frac{|x|}{2}}\varrho_{\alpha}^{0}(t,x+h)\mathrm{d}h\\
 & \quad\overset{(\ref{ineq 1: chen})}{\le}c_{11}t\left(t^{1/\alpha}+|x|\right)^{-d-\alpha}\int_{t^{1/\alpha}<|h|\le\frac{|x|}{2}}|h|^{-d-\alpha}\mathrm{d}h+c_{12}|x|^{-d-\alpha}\\
 & \quad\le c_{13}\left(t^{1/\alpha}+|x|\right)^{-d-\alpha}+c_{12}|x|^{-d-\alpha}\le c_{14}\varrho_{0}^{0}(t,x).
\end{align*}
For $I_{2}$, by setting $\tilde{h}:=t^{-1/\alpha}h$, we have
\begin{align*}
 & I_{2}=c_{5}\varrho_{\alpha}^{0}(t,x)\int_{\mathbb{R}^{d}}\left(|\tilde{h}|\wedge1\right)\cdot|t^{1/\alpha}\tilde{h}|^{-d-\alpha}t^{d/\alpha}\mathrm{d}\tilde{h}\\
 & \quad=c_{5}t^{-1}\varrho_{\alpha}^{0}(t,x)\int_{\mathbb{R}^{d}}\left(|\tilde{h}|\wedge1\right)\cdot|\tilde{h}|^{-d-\alpha}\mathrm{d}\tilde{h}\le c_{15}\varrho_{0}^{0}(t,x).
\end{align*}
Summarizing the above estimates for $I_{11}$, $I_{12}$ and $I_{2}$,
we obtain (\ref{eq aim: frac esti f_t}).

(ii) Let $1<\alpha<2$. For $|h|>1$, we have
\begin{align*}
 & \left|g_{1}(x+h)-g_{1}(x)-\chi_{\alpha}(h)h\cdot\nabla g_{1}(x)\right|\\
 & \quad\le g_{1}(x+h)+g_{1}(x)+|h|\cdot|\nabla g_{1}|(x)\\
 & \quad\overset{(\ref{bound:gradientg_t})}{\le}c_{16}\left(\varrho_{\alpha}^{0}(1,x+h)+\varrho_{\alpha}^{0}(1,x)\right)+c_{17}|h|\varrho_{\alpha-1}^{0}(1,x).
\end{align*}
For $|h|\le1$, we have
\begin{align}
 & \left|g_{1}(x+h)-g_{1}(x)-\chi_{\alpha}(h)h\cdot\nabla g_{1}(x)\right|\nonumber \\
 & \quad\overset{(\ref{identity 2: frac esti f_t})}{\le}|h|^{2}\int_{0}^{1}\int_{0}^{1}\left|\nabla^{2}g_{1}(x+rr'h)\right|\mathrm{\mathrm{d}\mathit{r'}d}r\nonumber \\
 & \quad\overset{(\ref{bound:gradientg_t})}{\le}c_{18}|h|^{2}\int_{0}^{1}\int_{0}^{1}(1+|x+rr'h|)^{-d-\alpha-2}\mathrm{\mathrm{d}\mathit{r'}d}r\nonumber \\
 & \quad\overset{(\ref{ineq 1: chen})}{\le}c_{19}|h|^{2}(1+|x|)^{-d-\alpha-2}\le c_{19}|h|^{2}(1+|x|)^{-d-\alpha}.\label{eq0: frac esti f_t}
\end{align}
So
\begin{align}
 & \left|g_{1}(x+h)-g_{1}(x)-\chi_{\alpha}(h)h\cdot\nabla g_{1}(x)\right|\nonumber \\
 & \quad\le c_{20}\left(|h|^{2}\wedge1\right)\left(\varrho_{\alpha}^{0}(1,x+h)+\varrho_{\alpha}^{0}(1,x)\right)+c_{21}\mathbf{1}_{\{|h|>1\}}|h|\varrho_{\alpha-1}^{0}(1,x).\label{Lemma 8, eq1}
\end{align}
By (\ref{scaling : gt}), we get
\begin{align}
 & \left|g_{t}(x+h)-g_{t}(x)-\chi_{\alpha}(h)h\cdot\nabla g_{t}(x)\right|\nonumber \\
 & \quad=t^{-d/\alpha}\left|g_{1}(t^{-1/\alpha}x+t^{-1/\alpha}h)-g_{1}(t^{-1/\alpha}x)-t^{-1/\alpha}h\cdot\nabla g_{1}(t^{-1/\alpha}x)\right|\nonumber \\
 & \quad\overset{(\ref{Lemma 8, eq1})}{\le}c_{20}\left(\left(t^{-2/\alpha}|h|^{2}\right)\wedge1\right)\left(\varrho_{\alpha}^{0}(t,x+h)+\varrho_{\alpha}^{0}(t,x)\right)+c_{21}\mathbf{1}_{\{|h|>t^{1/\alpha}\}}|h|\varrho_{\alpha-1}^{0}(t,x).\label{Lemma 8, eq 2}
\end{align}
Since
\begin{align*}
 & \int_{|h|>t^{1/\alpha}}|h|\varrho_{\alpha-1}^{0}(t,x)|h|^{-d-\alpha}\mathrm{d}h\le c_{22}\varrho_{0}^{0}(t,x),
\end{align*}
the assertion now follows from (\ref{Lemma 8, eq 3-1}) and (\ref{Lemma 8, eq 2}).
\end{proof}
\begin{lem}
\label{lem:Assume ln}Assume $\alpha=1$. Then there exists a constant
$C_{21}=C_{21}(d,\alpha,\kappa_{0},\kappa_{1})>0$ such that for all
$0<t\le1$ and $x\in\Rd$,
\begin{equation}
\int_{\Rd}\lvert f_{t}(x+h)-f_{t}(x)-\chi_{\alpha}(h)h\cdot\nabla f_{t}(x)\rvert\cdot\frac{1}{|h|^{d+\alpha}}\mathrm{d}h\le C_{21}\left(1+\ln\left(t^{-1}\right)\right)\varrho_{0}^{0}(t,x).\label{esti:f_t,integral_differ, a=00003D1}
\end{equation}
\end{lem}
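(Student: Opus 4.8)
The plan is to mirror the proof of Lemma~\ref{lem:frac esti f_t}: first reduce the estimate for $f_t$ to the analogous one for the rotationally symmetric stable density $g_t$, and then prove the latter by splitting the $h$-integral according to the size of $|h|$. For the reduction I would use $f_t=g_t*\tilde f_t$, hence $\nabla f_t=(\nabla g_t)*\tilde f_t$, so that
\begin{align*}
&f_t(x+h)-f_t(x)-\mathbf{1}_{\{|h|\le1\}}\,h\cdot\nabla f_t(x)\\
&\qquad=\int_{\Rd}\Big(g_t(x+h-y)-g_t(x-y)-\mathbf{1}_{\{|h|\le1\}}\,h\cdot\nabla g_t(x-y)\Big)\tilde f_t(y)\,\mathrm{d}y,
\end{align*}
the truncation factor being independent of $y$. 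Taking absolute values, integrating $|h|^{-d-1}\mathrm{d}h$, applying Fubini, and invoking the inequality $\int_{\Rd}\varrho_0^0(t,z-y)\tilde f_t(y)\,\mathrm{d}y\le c\,\varrho_0^0(t,z)$ already proved inside Lemma~\ref{lem:frac esti f_t}, it suffices to establish
\[
\int_{\Rd}\big|g_t(x+h)-g_t(x)-\mathbf{1}_{\{|h|\le1\}}\,h\cdot\nabla g_t(x)\big|\,|h|^{-d-1}\,\mathrm{d}h\le c\,\big(1+\ln(t^{-1})\big)\,\varrho_0^0(t,x),\qquad 0<t\le1;
\]
the $(1+\ln(t^{-1}))$ factor is constant in $y$ and passes through the convolution untouched.

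To prove the $g_t$-estimate I would split the $h$-integral into $\{|h|\le t\}$, $\{t<|h|\le1\}$ and $\{|h|>1\}$. On $\{|h|\le t\}$ (and $t=t^{1/\alpha}$ since $\alpha=1$) the second-order Taylor identity (\ref{identity 2: frac esti f_t}), the derivative bound (\ref{bound:gradientg_t}) with $k=2$, and inequality (\ref{ineq 1: chen}) (applicable as $|h|\le t^{1/\alpha}$) give $|g_t(x+h)-g_t(x)-h\cdot\nabla g_t(x)|\le c\,|h|^{2}\,t\,(t^{1/\alpha}+|x|)^{-d-3}$, and integrating $|h|^{-d-1}$ over the ball yields $c\,t^{2}(t+|x|)^{-d-3}\le c\,\varrho_0^0(t,x)$ because $t\le t+|x|$. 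On $\{|h|>1\}$ the truncation vanishes, so $|g_t(x+h)-g_t(x)|\le g_t(x+h)+g_t(x)$: the $g_t(x)$ term integrates to $c\,g_t(x)\le c\,\varrho_0^0(t,x)$ by (\ref{twosidebound:g_t}), and the $g_t(x+h)$ term is bounded by $c\,\varrho_0^0(t,x)$ via the same case distinction ($|x|\le2$ versus $|x|>2$, with a further split $|h|\le|x|/2$ versus $|h|>|x|/2$ in the latter, using $\int_{\Rd}g_t=1$) used for the term ``$I_{12}$'' in the proof of Lemma~\ref{lem:frac esti f_t}. Neither of these two regions produces a logarithm.

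The logarithm arises solely from the middle region $\{t<|h|\le1\}$, where the truncation equals $1$ and the first-order term must be retained. There I would use the crude bound $|g_t(x+h)-g_t(x)-h\cdot\nabla g_t(x)|\le g_t(x+h)+g_t(x)+|h|\,|\nabla g_t(x)|$. The $g_t(x)$ term contributes $g_t(x)\int_{t<|h|\le1}|h|^{-d-1}\,\mathrm{d}h\le c\,t^{-1}g_t(x)\le c\,\varrho_0^0(t,x)$ by (\ref{twosidebound:g_t}); the $g_t(x+h)$ term is controlled by $c\,\varrho_0^0(t,x)$ after separating $|x|\le2t$ (where $g_t(x+h)\le c\,t^{-d}$) from $|x|>2t$ (splitting $t<|h|\le|x|/2$ and $|x|/2<|h|\le1$, and using $\int_{\Rd}g_t=1$, equivalently $\int_{\Rd}\varrho_1^0(t,z)\,\mathrm{d}z\le C_5$ from (\ref{esti1:rho})); and the last term equals $|\nabla g_t(x)|\int_{t<|h|\le1}|h|^{-d}\,\mathrm{d}h=c\,\ln(t^{-1})\,|\nabla g_t(x)|\le c\,\ln(t^{-1})\,t\,(t+|x|)^{-d-2}\le c\,\ln(t^{-1})\,\varrho_0^0(t,x)$ by (\ref{bound:gradientg_t}). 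Adding the three regions gives the reduced inequality, and the lemma then follows from the reduction.

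The only step requiring genuine care — and the source of the logarithmic loss relative to \cite[Theorem~2.4]{chen2015heat} — is this middle region. Because $n(x,h)$, hence the kernel $K$ of the associated L\'evy process, is not assumed symmetric in $h$, for $\alpha=1$ the compensating drift in $\mathcal{L}$ must use the \emph{fixed} truncation $\{|h|\le1\}$ instead of the scale-adapted truncation $\{|h|\le t^{1/\alpha}\}$ available in the symmetric setting; and since the absolute value sits inside the $h$-integral, the cancellation $\int_{t<|h|\le1}h\cdot\nabla g_t(x)\,|h|^{-d-1}\,\mathrm{d}h=0$ cannot be exploited, so the pointwise bound $|h|\,|\nabla g_t(x)|$ against $|h|^{-d-1}$ over the annulus $t<|h|<1$ inevitably produces the factor $\ln(t^{-1})$. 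As noted in the Introduction, this weaker form of the estimate does not affect the subsequent Levi-method construction.
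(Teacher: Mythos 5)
Your proof is correct. The reduction $f_t=g_t*\tilde f_t$ together with $\int\varrho_0^0(t,\cdot-y)\tilde f_t(y)\,\mathrm{d}y\le c\,\varrho_0^0(t,\cdot)$ is exactly the device used in the proof of Lemma~\ref{lem:frac esti f_t}, and each of your three regional estimates for $g_t$ checks out (in particular the application of~\eqref{ineq 1: chen} on $\{|h|\le t\}$ is legitimate because $|rr'h|\le t=t^{1/\alpha}$, and the bound $t\,(t+|x|)^{-d-2}\ln(t^{-1})\le(t+|x|)^{-d-1}\ln(t^{-1})$ on the middle annulus is right). Your closing diagnosis of where the $\ln(t^{-1})$ comes from is also in line with the discussion in the Introduction.

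The route is, however, genuinely different from the paper's. The paper first proves the $t=1$ bound~\eqref{eq1.5: frac esti f_t} for $g_1$ with the \emph{fixed} truncation $\mathbf{1}_{\{|h|\le1\}}$, then invokes the scaling identity $g_t(x)=t^{-d}g_1(t^{-1}x)$. Under this scaling the fixed truncation becomes $\chi_1(t^{-1}h)=\mathbf{1}_{\{|h|\le t\}}$, so the scaled expression differs from the one actually appearing in the lemma by the single extra term $\mathbf{1}_{\{t<|h|\le1\}}\,t^{-1}h\cdot\nabla g_1(t^{-1}x)$; this is displayed in~\eqref{Lemma 9, eq1}. The ``main'' part (the piece with the scale-adapted truncation) is then integrated in one stroke using the pre-established convolution estimate~\eqref{Lemma 8, eq 3-1}, and the extra term produces the logarithm via~\eqref{Lemma 9, eq2}. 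In effect, the paper decomposes the \emph{integrand} into a scale-adapted piece plus a defect, and the log sits entirely in the defect; you instead decompose the \emph{domain} of the $h$-integral into three regions and re-derive the requisite bounds region by region. The paper's argument is shorter because it recycles~\eqref{eq1.5: frac esti f_t} and~\eqref{Lemma 8, eq 3-1} and isolates the log in a single transparent term; your argument is more self-contained and does not rely on the scaling trick, at the cost of repeating some of the case analysis already carried out inside the proof of Lemma~\ref{lem:frac esti f_t}. Both identify the annulus $\{t<|h|\le1\}$ paired with the first-order term $|h|\,|\nabla g_t(x)|$ as the unavoidable source of $\ln(t^{-1})$.
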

\begin{proof}
Note that $\chi_{\alpha}(h)=\mathbf{1}_{\{|h|\le1\}}$ when $\alpha=1$.
Similarly to (\ref{eq0: frac esti f_t}), we have that for $|h|\le1$,
\[
\left|g_{1}(x+h)-g_{1}(x)-\chi_{\alpha}(h)h\cdot\nabla g_{1}(x)\right|\le c_{1}|h|^{2}(1+|x|)^{-d-\alpha}.
\]
 For $|h|>1$, we have
\[
\left|g_{1}(x+h)-g_{1}(x)-\chi_{\alpha}(h)h\cdot\nabla g_{1}(x)\right|\le c_{2}\left(\varrho_{\alpha}^{0}(1,x+h)+\varrho_{\alpha}^{0}(1,x)\right).
\]
So
\begin{equation}
\left|g_{1}(x+h)-g_{1}(x)-\chi_{\alpha}(h)h\cdot\nabla g_{1}(x)\right|\le c_{3}\left(|h|^{2}\wedge1\right)\left(\varrho_{\alpha}^{0}(1,x+h)+\varrho_{\alpha}^{0}(1,x)\right).\label{eq1.5: frac esti f_t}
\end{equation}
By the scaling property $g_{t}(x)=t^{-d/\alpha}g_{1}(t^{-1/\alpha}x)$,
we obtain
\begin{align}
 & \left|g_{t}(x+h)-g_{t}(x)-\chi_{\alpha}(h)h\cdot\nabla g_{t}(x)\right|\nonumber \\
 & \quad=t^{-d}\left|g_{1}(t^{-1}x+t^{-1}h)-g_{1}(t^{-1}x)-t^{-1}\chi_{1}(h)h\cdot\nabla g_{1}(t^{-1}x)\right|\nonumber \\
 & \quad=t^{-d}\big|g_{1}(t^{-1}x+t^{-1}h)-g_{1}(t^{-1}x)-\chi_{1}(t^{-1}h)t^{-1}h\cdot\nabla g_{1}(t^{-1}x)\nonumber \\
 & \qquad-\mathbf{1}_{\left\{ t<|h|\le1\right\} }(h)t^{-1}h\cdot\nabla g_{1}(t^{-1}x)\big|\nonumber \\
 & \overset{(\ref{eq1.5: frac esti f_t}),(\ref{bound:gradientg_t})}{\le}c_{3}\left(|t^{-1}h|^{2}\wedge1\right)\left(\varrho_{\alpha}^{0}(t,x+h)+\varrho_{\alpha}^{0}(t,x)\right)\nonumber \\
 & \qquad\qquad+c_{4}\mathbf{1}_{\left\{ t<|h|\le1\right\} }(h)t^{-d-1}(1+|t^{-1}x|)^{-d-2}|h|\nonumber \\
 & \quad\le c_{3}\left(|t^{-1}h|^{2}\wedge1\right)\left(\varrho_{\alpha}^{0}(t,x+h)+\varrho_{\alpha}^{0}(t,x)\right)+c_{4}\mathbf{1}_{\left\{ t<|h|\le1\right\} }(h)\varrho_{0}^{0}(t,x)|h|.\label{Lemma 9, eq1}
\end{align}
Note that
\begin{equation}
\int_{\Rd}\mathbf{1}_{\left\{ t<|h|\le1\right\} }(h)|h|\cdot\frac{1}{|h|^{d+1}}\mathrm{d}h=\int_{\left\{ t<|h|\le1\right\} }\frac{1}{|h|^{d}}\mathrm{d}h=c_{5}\ln(t^{-1}).\label{Lemma 9, eq2}
\end{equation}
Combining (\ref{Lemma 8, eq 3-1}), (\ref{Lemma 9, eq1}) and (\ref{Lemma 9, eq2}),
we obtain (\ref{esti:f_t,integral_differ, a=00003D1}).
\end{proof}
For a function $f$ on $\Rd$ we define the function $\delta_{f}$
on $\mathbb{R}^{2d}$ by
\[
\delta_{f}(x,x'):=f(x)-f(x'),\quad x,x'\in\Rd.
\]

\begin{lem}
Assume $\alpha\neq1$. Then there exists a constant $C_{22}=C_{22}(d,\alpha,\kappa_{0},\kappa_{1})>0$
such that for all $0<t\le1$ and $x,x'\in\Rd$,
\begin{align}
\int_{\Rd}\left|\delta_{f_{t}}(x+h,x'+h)-\delta_{f_{t}}(x,x')-\chi_{\alpha}(h)h\cdot\delta_{\nabla f_{t}}(x,x')\right|\cdot|h|^{-d-\alpha}\mathrm{d}h\qquad\nonumber \\
\le C_{22}\left(\left(t^{-1/\alpha}|x-x'|\right)\wedge1\right)\left\{ \varrho_{0}^{0}(t,x)+\varrho_{0}^{0}(t,x')\right\} .\label{esti:f_t,integral_two_differ}
\end{align}
\end{lem}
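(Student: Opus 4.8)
The plan is to follow the strategy of Lemma~\ref{lem:frac esti f_t}: prove the estimate first with $f_t$ replaced by $g_t$, and then transfer it to $f_t$ via $f_t=g_t*\tilde f_t$ (and $\nabla f_t=(\nabla g_t)*\tilde f_t$). Writing
\[
D_g(u,u',h):=\delta_{g_t}(u+h,u'+h)-\delta_{g_t}(u,u')-\chi_\alpha(h)h\cdot\delta_{\nabla g_t}(u,u'),
\]
the integrand on the left of (\ref{esti:f_t,integral_two_differ}) equals $\int_{\Rd}D_g(x-y,x'-y,h)\tilde f_t(y)\,\mathrm{d}y$; since $|(x-y)-(x'-y)|=|x-x'|$, once we have
\[
\int_{\Rd}|D_g(u,u',h)|\,|h|^{-d-\alpha}\,\mathrm{d}h\le c\big((t^{-1/\alpha}|u-u'|)\wedge1\big)\big\{\varrho_0^0(t,u)+\varrho_0^0(t,u')\big\}\qquad(0<t\le1,\ u,u'\in\Rd),
\]
the prefactor $(t^{-1/\alpha}|x-x'|)\wedge1$ comes out of the $y$-integral and the bound $\int_{\Rd}\varrho_0^0(t,z-y)\tilde f_t(y)\,\mathrm{d}y\le c\,\varrho_0^0(t,z)$ (established inside the proof of Lemma~\ref{lem:frac esti f_t}) completes the argument. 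So everything reduces to the displayed estimate for $g_t$.

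The core ingredient is the following auxiliary gradient estimate: for all $0<t\le1$ and $y\in\Rd$,
\[
\int_{\Rd}\big|\nabla g_t(y+h)-\nabla g_t(y)-\chi_\alpha(h)(h\cdot\nabla)\nabla g_t(y)\big|\,|h|^{-d-\alpha}\,\mathrm{d}h\le c\,t^{-1/\alpha}\varrho_0^0(t,y).
\]
Here it is crucial that $\alpha\neq1$, so that $\chi_\alpha(h)\equiv\mathbf{1}_{\alpha>1}$ is constant; then $\nabla g_t$ still scales as $\nabla g_t(x)=t^{-(d+1)/\alpha}(\nabla g_1)(t^{-1/\alpha}x)$, and each of its components satisfies $|\nabla^k\partial_i g_t(x)|\le C_{14}t(t^{1/\alpha}+|x|)^{-d-\alpha-1-k}$ by (\ref{bound:gradientg_t}), i.e.\ the same bounds as $g_t$ but with one extra order of spatial decay. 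Rerunning the case analysis in the proof of Lemma~\ref{lem:frac esti f_t} (the cases $0<\alpha<1$ and $1<\alpha<2$ treated separately) with $g_t$ replaced by each $\partial_i g_t$ then yields the displayed bound, the additional power $t^{-1/\alpha}$ being precisely the effect of the extra order of decay under rescaling.

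With this in hand I would distinguish two cases. If $|u-u'|\le t^{1/\alpha}$, set $w:=u-u'$ and apply the fundamental theorem of calculus along the segment from $u'$ to $u$ to each of the three terms defining $D_g$, getting
\[
D_g(u,u',h)=\int_0^1\Big[\nabla g_t(u'+sw+h)-\nabla g_t(u'+sw)-\chi_\alpha(h)(h\cdot\nabla)\nabla g_t(u'+sw)\Big]\cdot w\,\mathrm{d}s.
\]
Since $|sw|\le|w|\le t^{1/\alpha}$, the auxiliary estimate together with (\ref{ineq 1: chen}) (applied with $\gamma=d+\alpha$ to pass from $\varrho_0^0(t,u'+sw)$ to $\varrho_0^0(t,u')$) gives $\int_{\Rd}|D_g(u,u',h)|\,|h|^{-d-\alpha}\,\mathrm{d}h\le c\,t^{-1/\alpha}|w|\,\varrho_0^0(t,u')$, which is the desired bound because $(t^{-1/\alpha}|u-u'|)\wedge1=t^{-1/\alpha}|u-u'|$ in this regime. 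If instead $|u-u'|>t^{1/\alpha}$, then $(t^{-1/\alpha}|u-u'|)\wedge1=1$, and the triangle inequality
\[
|D_g(u,u',h)|\le|g_t(u+h)-g_t(u)-\chi_\alpha(h)h\cdot\nabla g_t(u)|+|g_t(u'+h)-g_t(u')-\chi_\alpha(h)h\cdot\nabla g_t(u')|
\]
combined with (\ref{eq aim: frac esti f_t}) (the $g_t$-version of (\ref{esti:f_t,integral_differ}) proved inside Lemma~\ref{lem:frac esti f_t}) yields $\int_{\Rd}|D_g(u,u',h)|\,|h|^{-d-\alpha}\,\mathrm{d}h\le c\,\varrho_0^0(t,u)+c\,\varrho_0^0(t,u')$. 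Combining the two cases gives the $g_t$-estimate, and hence the lemma.

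The main obstacle is the auxiliary gradient estimate: it forces a re-run of the moderately lengthy multi-case argument of Lemma~\ref{lem:frac esti f_t} for $\nabla g_t$. No genuinely new idea is involved — precisely because $\alpha\neq1$ keeps $\chi_\alpha$ constant, one only has to keep track of the shifted decay exponents and of the resulting power of $t$ — but this is where most of the work lies; the two-case split for $g_t$ and the convolution transfer to $f_t$ are then routine.
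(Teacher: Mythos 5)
Your proposal is correct, and it follows a genuinely different route from the paper's. Both proofs reduce, via $f_t=g_t*\tilde f_t$ and $\nabla f_t=(\nabla g_t)*\tilde f_t$, to the corresponding estimate for $g_t$, and your reduction (Fubini plus the bound $\int\varrho_0^0(t,z-y)\tilde f_t(y)\mathrm{d}y\le c\,\varrho_0^0(t,z)$, noting $|(x-y)-(x'-y)|=|x-x'|$ does not depend on $y$) is the same as the paper's. The divergence is in how the $g_t$-estimate is obtained. The paper derives a single pointwise bound on the integrand of $D_g$ of the form $c(|x-x'|\wedge1)(|h|^2\wedge1)(\varrho_\alpha^0(1,x+h)+\varrho_\alpha^0(1,x)+\varrho_\alpha^0(1,x'+h)+\varrho_\alpha^0(1,x'))+c\mathbf{1}_{\{|h|>1\}}|h|(|x-x'|\wedge1)(\varrho_{\alpha-1}^0(1,x)+\varrho_{\alpha-1}^0(1,x'))$ — obtained by Taylor expanding in both $h$ and in $x-x'$ (introducing $\nabla^3 g_1$), combined with (\ref{Lemma 8, eq1}) — and then rescales and integrates, just as in Lemma~\ref{lem:frac esti f_t}. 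You instead prove an auxiliary integrated estimate $\int|\nabla g_t(y+h)-\nabla g_t(y)-\chi_\alpha(h)(h\cdot\nabla)\nabla g_t(y)|\,|h|^{-d-\alpha}\mathrm{d}h\le c\,t^{-1/\alpha}\varrho_0^0(t,y)$, and split on $|u-u'|\lessgtr t^{1/\alpha}$: for the small-difference regime you interpolate along the segment and use the auxiliary bound together with (\ref{ineq 1: chen}), while in the large-difference regime the triangle inequality and (\ref{eq aim: frac esti f_t}) applied at $u$ and $u'$ suffice. Your identity $D_g(u,u',h)=\int_0^1[\nabla g_t(u'+sw+h)-\nabla g_t(u'+sw)-\chi_\alpha(h)(h\cdot\nabla)\nabla g_t(u'+sw)]\cdot w\,\mathrm{d}s$ is valid because the Hessian of $g_t$ is symmetric, and your auxiliary estimate scales correctly (the extra $t^{-1/\alpha}$ comes out of $\nabla g_t(x)=t^{-(d+1)/\alpha}\nabla g_1(t^{-1/\alpha}x)$), with the crucial point — as you note — that $\chi_\alpha$ is constant when $\alpha\neq1$, so the rescaled compensator matches. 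Your approach is more modular (a Lipschitz-in-parameter bound from a derivative bound) but trades the paper's single pointwise interpolation for the work of reproving the integrated estimate for $\nabla g_t$; the amount of case-checking is comparable. Either route is acceptable.
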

\begin{proof}
As in Lemma \ref{lem:frac esti f_t}, we only need to prove that for
all $0<t\le1$ and $x,x'\in\Rd$,
\begin{align*}
\int_{\Rd}\left|\delta_{g_{t}}(x+h,x'+h)-\delta_{g_{t}}(x,x')-\chi_{\alpha}(h)h\cdot\delta_{\nabla g_{t}}(x,x')\right|\cdot|h|^{-d-\alpha}\mathrm{d}h\qquad\\
\le\tilde{C}_{22}\left(\left(t^{-1/\alpha}|x-x'|\right)\wedge1\right)\left\{ \varrho_{0}^{0}(t,x)+\varrho_{0}^{0}(t,x')\right\} ,
\end{align*}
where $\tilde{C}_{22}=\tilde{C}_{22}(d,\alpha,\kappa_{0},\kappa_{1})>0$
is a constant.

(i) We first consider the case $\alpha>1$. If $|h|\le1$ and $|x-x'|\le1$,
then
\begin{align}
 & \left|\delta_{g_{1}}(x+h,x'+h)-\delta_{g_{1}}(x,x')-\chi_{\alpha}(h)h\cdot\delta_{\nabla g_{1}}(x,x')\right|\nonumber \\
 & \quad\overset{(\ref{identity 2: frac esti f_t})}{=}\left|\int_{0}^{1}\left(\int_{0}^{1}\left(\nabla^{2}g_{1}(x+rr'h)-\nabla^{2}g_{1}(x'+rr'h)\right)\cdot rh\mathrm{d}r'\right)\cdot h\mathrm{d}r\right|\nonumber \\
 & \quad\le c_{1}|h|^{2}|x-x'|\int_{0}^{1}\int_{0}^{1}\int_{0}^{1}\left|\nabla^{3}g_{1}(x+rr'h+r''(x'-x)\right|\mathrm{d}r''\mathrm{d}r'\mathrm{d}r\nonumber \\
 & \quad\overset{(\ref{bound:gradientg_t})}{\le}c_{2}|h|^{2}|x-x'|\int_{0}^{1}\int_{0}^{1}\int_{0}^{1}\left(1+|x+rr'h+r''(x'-x)|\right)^{-d-\alpha-3}\mathrm{d}r''\mathrm{d}r'\mathrm{d}r\nonumber \\
 & \quad\overset{(\ref{ineq 1: chen})}{\le}c_{3}|h|^{2}|x-x'|\left(1+|x|\right)^{-d-\alpha-3}\le c_{3}|h|^{2}|x-x'|\varrho_{\alpha}^{0}(1,x).\label{Lemma 10, eq 1}
\end{align}
If $|h|>1$ and $|x-x'|\le1$, then
\begin{align}
 & \left|\delta_{g_{1}}(x+h,x'+h)-\delta_{g_{1}}(x,x')-\chi_{\alpha}(h)h\cdot\delta_{\nabla g_{1}}(x,x')\right|\nonumber \\
 & \quad\le|x-x'|\int_{0}^{1}|\nabla g_{1}(x+h+r(x'-x))|\mathrm{d}r\nonumber \\
 & \qquad+|x-x'|\int_{0}^{1}|\nabla g_{1}(x+r(x'-x))|\mathrm{d}r\nonumber \\
 & \qquad+|h|\cdot|x-x'|\int_{0}^{1}|\nabla^{2}g_{1}(x+r(x'-x))|\mathrm{d}r\nonumber \\
 & \quad\overset{(\ref{bound:gradientg_t}),(\ref{ineq 1: chen})}{\le}c_{4}|x-x'|\left(1+|x+h|\right)^{-d-\alpha-1}+c_{4}|x-x'|\left(1+|x|\right)^{-d-\alpha-1}\nonumber \\
 & \qquad\qquad+c_{4}|h|\cdot|x-x'|\left(1+|x|\right)^{-d-\alpha-2}.\label{Lemma 10, eq2}
\end{align}
In view of (\ref{Lemma 8, eq1}), we thus get
\begin{align*}
 & \left|\delta_{g_{1}}(x+h,x'+h)-\delta_{g_{1}}(x,x')-\chi_{\alpha}(h)h\cdot\delta_{\nabla g_{1}}(x,x')\right|\\
 & \quad\le c_{5}\left(|x-x'|\wedge1\right)\left(|h|^{2}\wedge1\right)\left(\varrho_{\alpha}^{0}(1,x+h)+\varrho_{\alpha}^{0}(1,x)+\varrho_{\alpha}^{0}(1,x'+h)+\varrho_{\alpha}^{0}(1,x')\right)\\
 & \qquad+c_{6}\mathbf{1}_{\{|h|>1\}}|h|\left(|x-x'|\wedge1\right)\left(\varrho_{\alpha-1}^{0}(1,x)+\varrho_{\alpha-1}^{0}(1,x')\right).
\end{align*}
Then we can proceed in the same way as in the proof of Lemma \ref{lem:frac esti f_t}
to obtain (\ref{esti:f_t,integral_two_differ}).

(ii) Let $0<\alpha<1$. Similarly to (\ref{Lemma 10, eq 1}), we have
that for $|h|\le1$ and $|x-x'|\le1$,
\[
\left|\delta_{g_{1}}(x+h,x'+h)-\delta_{g_{1}}(x,x')\right|\le c_{7}|h|\cdot|x-x'|\varrho_{\alpha}^{0}(1,x);
\]
Similarly to (\ref{Lemma 10, eq2}), for $|h|>1$ and $|x-x'|\le1$,
we obtain
\begin{equation}
\left|\delta_{g_{1}}(x+h,x'+h)-\delta_{g_{1}}(x,x')\right|\le c_{8}|x-x'|\left(\varrho_{\alpha}^{0}(1,x)+\varrho_{\alpha}^{0}(1,x+h)\right).\label{new eq1: Lemma 11}
\end{equation}
Noting (\ref{new eq1: Lemma 9}), we thus get
\begin{align*}
 & \left|\delta_{g_{1}}(x+h,x'+h)-\delta_{g_{1}}(x,x')\right|\\
 & \quad\le c_{9}\left(|x-x'|\wedge1\right)\left(|h|\wedge1\right)\left(\varrho_{\alpha}^{0}(1,x+h)+\varrho_{\alpha}^{0}(1,x)+\varrho_{\alpha}^{0}(1,x'+h)+\varrho_{\alpha}^{0}(1,x')\right).
\end{align*}
The rest of the proof is completely similar to Lemma \ref{lem:frac esti f_t}.
We omit the details.
\end{proof}
\begin{lem}
\label{Lemma 2: delta}Assume $\alpha=1$. Then there exists a constant
$C_{23}=C_{23}(d,\alpha,\kappa_{0},\kappa_{1})>0$ such that for all
$0<t\le1$ and $x,x'\in\Rd$,
\begin{align}
\int_{\Rd}\lvert\delta_{f_{t}}(x+h,x'+h)-\delta_{f_{t}}(x,x')-\chi_{\alpha}(h)h\cdot\delta_{\nabla f_{t}}(x,x')\rvert\cdot\frac{1}{|h|^{d+\alpha}}\mathrm{d}h\qquad\nonumber \\
\le C_{23}\left(1+\ln(t^{-1})\right)\left(\left(t^{-1/\alpha}|x-x'|\right)\wedge1\right)\left\{ \varrho_{0}^{0}(t,x)+\varrho_{0}^{0}(t,x')\right\} .\label{esti:f_t,integral_two_differ-1}
\end{align}

\end{lem}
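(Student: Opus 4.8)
The plan is to argue exactly as in Lemma~\ref{lem:Assume ln}, carrying the extra factor $(t^{-1/\alpha}|x-x'|)\wedge1$ through the estimates in the way the preceding lemma does for $\alpha\neq1$. Note that $\chi_\alpha(h)=\chi_1(h)=\mathbf{1}_{\{|h|\le1\}}$ since $\alpha=1$. As in the proof of Lemma~\ref{lem:frac esti f_t}, write $f_t=g_t*\tilde f_t$, so that $\delta_{f_t}(x+h,x'+h)=\int_{\Rd}\delta_{g_t}(x+h-y,x'+h-y)\,\tilde f_t(y)\,\mathrm{d}y$; combined with $\int_{\Rd}\varrho_0^0(t,x-y)\tilde f_t(y)\,\mathrm{d}y\le c\,\varrho_0^0(t,x)$ (which follows from $\tilde f_t*\tilde f_t=\tilde f_{2t}$ and the two-sided bounds for $\tilde f_t$, exactly as in Lemma~\ref{lem:frac esti f_t}), it suffices to prove (\ref{esti:f_t,integral_two_differ-1}) with $g_t$ in place of $f_t$.

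First I establish a pointwise bound for $g_1$-differences. Splitting into the cases $|h|\le1$, $|h|>1$ and $|x-x'|\le1$, $|x-x'|>1$, and applying the Taylor identities (\ref{identity 1: frac esti f_t})--(\ref{identity 2: frac esti f_t}) (so that third-order derivatives of $g_1$ appear when $|h|\le1$ and $|x-x'|\le1$, and lower-order ones otherwise) together with (\ref{bound:gradientg_t}) and (\ref{ineq 1: chen}), one obtains, for all $x,x',h\in\Rd$,
\begin{align*}
&\bigl|\delta_{g_1}(x+h,x'+h)-\delta_{g_1}(x,x')-\chi_1(h)h\cdot\delta_{\nabla g_1}(x,x')\bigr|\\
&\qquad\le c\,(|x-x'|\wedge1)(|h|^2\wedge1)\bigl(\varrho_\alpha^0(1,x+h)+\varrho_\alpha^0(1,x)+\varrho_\alpha^0(1,x'+h)+\varrho_\alpha^0(1,x')\bigr);
\end{align*}
here, contrary to the case $\alpha>1$, no $\mathbf{1}_{\{|h|>1\}}|h|$-term survives, because $\chi_1(h)$ vanishes for $|h|>1$. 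I also record the auxiliary estimate
\[
|\delta_{\nabla g_1}(y,y')|\le c\,(|y-y'|\wedge1)\bigl((1+|y|)^{-d-2}+(1+|y'|)^{-d-2}\bigr),
\]
which comes from (\ref{bound:gradientg_t}) with $k=1$ when $|y-y'|>1$ and with $k=2$ together with (\ref{ineq 1: chen}) when $|y-y'|\le1$.

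The only genuinely new point is the $\alpha=1$ scaling, which does not respect the truncation: from $g_t(x)=t^{-d}g_1(t^{-1}x)$ and the identity $\chi_1(h)=\chi_1(t^{-1}h)+\mathbf{1}_{\{t<|h|\le1\}}(h)$, valid for $0<t\le1$,
\begin{align*}
&\delta_{g_t}(x+h,x'+h)-\delta_{g_t}(x,x')-\chi_1(h)h\cdot\delta_{\nabla g_t}(x,x')\\
&\quad=t^{-d}\Bigl[\delta_{g_1}\bigl(t^{-1}(x+h),t^{-1}(x'+h)\bigr)-\delta_{g_1}\bigl(t^{-1}x,t^{-1}x'\bigr)-\chi_1(t^{-1}h)(t^{-1}h)\cdot\delta_{\nabla g_1}\bigl(t^{-1}x,t^{-1}x'\bigr)\Bigr]\\
&\qquad-t^{-d}\,\mathbf{1}_{\{t<|h|\le1\}}(h)\,(t^{-1}h)\cdot\delta_{\nabla g_1}\bigl(t^{-1}x,t^{-1}x'\bigr).
\end{align*}
Inserting the two $g_1$-estimates and simplifying powers of $t$ (using $t^{-d}\varrho_\alpha^0(1,t^{-1}z)=\varrho_\alpha^0(t,z)$, $(1+t^{-1}|z|)^{-d-2}=t^{d+2}(t+|z|)^{-d-2}$ and $t\,(t+|z|)^{-d-2}\le\varrho_0^0(t,z)$) gives
\begin{align*}
&\bigl|\delta_{g_t}(x+h,x'+h)-\delta_{g_t}(x,x')-\chi_1(h)h\cdot\delta_{\nabla g_t}(x,x')\bigr|\\
&\quad\le c\bigl((t^{-1}|x-x'|)\wedge1\bigr)(t^{-2}|h|^2\wedge1)\bigl(\varrho_\alpha^0(t,x+h)+\varrho_\alpha^0(t,x)+\varrho_\alpha^0(t,x'+h)+\varrho_\alpha^0(t,x')\bigr)\\
&\qquad+c\,\mathbf{1}_{\{t<|h|\le1\}}(h)\,|h|\bigl((t^{-1}|x-x'|)\wedge1\bigr)\bigl(\varrho_0^0(t,x)+\varrho_0^0(t,x')\bigr).
\end{align*}
Integrating against $|h|^{-d-1}\,\mathrm{d}h$: after pulling out $(t^{-1}|x-x'|)\wedge1$, the first line is controlled by two applications of (\ref{Lemma 8, eq 3-1}) (at $x$ and at $x'$), and the second line contributes $\int_{\{t<|h|\le1\}}|h|^{-d}\,\mathrm{d}h=c\ln(t^{-1})$ as in (\ref{Lemma 9, eq2}). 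This proves (\ref{esti:f_t,integral_two_differ-1}) for $g_t$; convolving with $\tilde f_t$ as in the first paragraph then gives it for $f_t$.

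I expect the bulk of the work to be bookkeeping around the factor $(t^{-1/\alpha}|x-x'|)\wedge1$. In the pointwise $g_1$-estimate the third-order Taylor bound in the $x$-direction is legitimate only for $|x-x'|\le1$, so for $|x-x'|>1$ one must instead split $\delta_{g_1}(x+h,x'+h)-\delta_{g_1}(x,x')-\chi_1(h)h\cdot\delta_{\nabla g_1}(x,x')$ into the two single increments $[g_1(x+h)-g_1(x)-\chi_1(h)h\cdot\nabla g_1(x)]-[g_1(x'+h)-g_1(x')-\chi_1(h)h\cdot\nabla g_1(x')]$ and bound each as in Lemma~\ref{lem:Assume ln}; and in the scaling-induced annulus term one must carry the independent factor $(|x-x'|\wedge1)$ inside the auxiliary bound on $\delta_{\nabla g_1}$ and check that the powers of $t$ collapse onto $\varrho_0^0(t,\cdot)$ before the logarithmically divergent integral $\int_{\{t<|h|\le1\}}|h|^{-d}\,\mathrm{d}h$ is evaluated. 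Everything else is routine manipulation with (\ref{ineq 1: chen}), (\ref{bound:gradientg_t}) and (\ref{Lemma 8, eq 3-1}).
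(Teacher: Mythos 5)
Your proof is correct and follows essentially the same route as the paper: both establish the pointwise bound $|\delta_{g_1}(x+h,x'+h)-\delta_{g_1}(x,x')-\chi_1(h)h\cdot\delta_{\nabla g_1}(x,x')|\le c(|x-x'|\wedge1)(|h|^2\wedge1)\sum\varrho_\alpha^0(1,\cdot)$ together with the auxiliary estimate on $\delta_{\nabla g_1}$, then handle the $\alpha=1$ scaling mismatch via the annulus decomposition $\chi_1(h)=\chi_1(t^{-1}h)+\mathbf{1}_{\{t<|h|\le1\}}(h)$ (yielding the logarithm exactly as in Lemma~\ref{lem:Assume ln}), and finally convolve with $\tilde f_t$. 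The paper simply compresses these steps by citing (\ref{Lemma 10, eq 1}), (\ref{new eq1: Lemma 11}), (\ref{eq1.5: frac esti f_t}) and deferring to Lemma~\ref{lem:Assume ln}; you have supplied the same computation in full.
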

\begin{proof}
By (\ref{Lemma 10, eq 1}), (\ref{new eq1: Lemma 11}) and (\ref{eq1.5: frac esti f_t}),
we have
\begin{align*}
 & \lvert\delta_{g_{1}}(x+h,x'+h)-\delta_{g_{1}}(x,x')-\chi_{\alpha}(h)h\cdot\delta_{\nabla g_{1}}(x,x')\rvert\\
 & \le c_{1}\left(\big(|x-x'|\big)\wedge1\right)\left(|h|^{2}\wedge1\right)\Big(\varrho_{\alpha}^{0}(1,x+h)+\varrho_{\alpha}^{0}(1,x)+\varrho_{\alpha}^{0}(1,x'+h)+\varrho_{\alpha}^{0}(1,x')\Big).
\end{align*}
 Similarly to (\ref{Lemma 10, eq 1}), if $t^{-1}|x-x'|\le1$, then
\[
|\nabla g_{1}(t^{-1}x)-\nabla g_{1}(t^{-1}x')|\le c_{2}t^{-1}|x-x'|(1+|t^{-1}x|)^{-d-3}.
\]
Noting (\ref{bound:gradientg_t}), we thus get
\begin{align*}
 & |\nabla g_{1}(t^{-1}x)-\nabla g_{1}(t^{-1}x')|\\
 & \quad\le c_{3}\left((t^{-1}|x-x'|)\wedge1\right)\Big((1+|t^{-1}x|)^{-d-2}+(1+|t^{-1}x'|)^{-d-2}\Big).
\end{align*}
The rest of the proof goes in the same way as in Lemma \ref{lem:Assume ln}.
\end{proof}

\section{Transition density of the Markov process associated with $\mathcal{A}$ }

In this section we will use Levi's method (parametrix) to construct
the transition density of the Markov processes that corresponds to
the generator $\mathcal{A}$, where
\begin{equation}
\mathcal{A}f(x)=\int_{\mathbb{R}^{d}\backslash\{0\}}\left[f(x+h)-f(x)-\chi_{\alpha}(h)h\cdot\nabla f(x)\right]\frac{n(x,h)}{|h|^{d+\alpha}}\mathrm{d}h.\label{defi 2: A}
\end{equation}
Throughout this section, we assume that $n(\cdot,\cdot)$ satisfies
Assumption \ref{ass1:the function n}.

Levi's method has been applied in \cite{chen2015heat} and \cite{MR1744782}
to construct transition densities of stable-like processes that are
similar to what we consider here. In the sequel we will follow closely
the approach of \cite{chen2015heat}.

According to Assumption \ref{ass1:the function n}, for each $y\in\Rd$,
$h\mapsto n(y,h)|h|^{-d-\alpha}$ is a function that satisfies (\ref{condition1forK})
and (\ref{condition2forK_a=00003D00003D1}). Let $f_{t}^{y}(\cdot)$,
$t>0$, be the density functions of the stable-like Lévy process with
the jump kernel $n(y,h)|h|^{-d-\alpha}$, namely,
\begin{equation}
f_{t}^{y}(x):=\frac{1}{(2\pi)^{d}}\int_{\mathbb{R}^{d}}e^{-iu\cdot x}e^{-t\psi^{y}(u)}\mathrm{d}u,\quad x\in\mathbb{R}^{d},\ t>0,\label{defi: f_t^y}
\end{equation}
where
\begin{equation}
\psi^{y}(u)=-\int_{\mathbb{R}^{d}\setminus\{0\}}\Big(e^{iu\cdot h}-1-\chi_{\alpha}(h)iu\cdot h\Big)\frac{n(y,h)}{|h|^{d+\alpha}}\mathrm{d}h.\label{defi: psi^y}
\end{equation}
 Define the operator $\mathcal{A}^{y}$ by
\begin{equation}
\mathcal{A}^{y}f(x):=\int_{\mathbb{R}^{d}\backslash\{0\}}\left[f(x+h)-f(x)-\chi_{\alpha}(h)h\cdot\nabla f(x)\right]\frac{n(y,h)}{|h|^{d+\alpha}}\mathrm{d}h.\label{defi: A^y}
\end{equation}

\begin{rem}
In view of Assumption \ref{ass1:the function n}, all the estimates
that we established in Lemmas \ref{lem: upper esti for f_t} \textendash{}
\ref{Lemma 2: delta} are also true for $f_{t}^{y}$ (in place of
$f_{t}$). \label{rem: f_t^y}

The following Lemma is analog to \cite[Theorem 2.5]{chen2015heat}.
\end{rem}
\begin{lem}
\label{Lemma 12}Suppose $\gamma\in(0,\alpha/4)$. Then there exists
some constant $C_{24}=C_{24}(d,\alpha,\kappa_{0},\kappa_{1},\kappa_{2},\gamma)>0$
such that for all $0<t\le1$ and $x,x'\in\Rd$,
\begin{equation}
\left|f_{t}^{y}(x)-f_{t}^{y'}(x)\right|\le C_{24}\left(|y-y'|^{\theta}\wedge1\right)\left(\varrho_{\alpha}^{0}+\varrho_{\alpha-\gamma}^{\gamma}\right)(t,x),\label{esti:f_t_differof_y_y'}
\end{equation}
\begin{equation}
\left|\nabla_{x}f_{t}^{y}(x)-\nabla_{x}f_{t}^{y'}(x)\right|\le C_{24}\left(|y-y'|^{\theta}\wedge1\right)\left(\varrho_{\alpha-1}^{0}+\varrho_{\alpha-\gamma-1}^{\gamma}\right)(t,x),\label{esti:gradient_f_t_differof_y_y'}
\end{equation}
and
\begin{align}
\int_{\Rd}\left|\left(f_{t}^{y}-f_{t}^{y'}\right)(x+h)-\left(f_{t}^{y}-f_{t}^{y'}\right)(x)-\chi_{\alpha}(h)h\cdot\nabla\left(f_{t}^{y}-f_{t}^{y'}\right)(x)\right|\cdot|h|^{-d-\alpha}\mathrm{d}h\nonumber \\
\le C_{24}\left(|y-y'|^{\theta}\wedge1\right)\left(\varrho_{0}^{0}+\varrho_{-\gamma}^{\gamma}\right)(t,x).\label{esti:f_t_integral_differ_of_y_y'}
\end{align}

\end{lem}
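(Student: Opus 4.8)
The plan is to follow the perturbation (Duhamel) argument of \cite[Theorem 2.5]{chen2015heat}. The starting point is that $\mathcal{A}^{y}$ is a Fourier multiplier, $\widehat{\mathcal{A}^{y}g}(u)=-\psi^{y}(u)\hat g(u)$, and that $\Re\psi^{y}(u)\ge C_{12}|u|^{\alpha}$ uniformly in $y$, so that $e^{-t\psi^{y}}$ and the finitely many $u$-derivatives we shall need are absolutely integrable. Combining Fourier inversion with the elementary identity
\[
e^{-t\psi^{y}(u)}-e^{-t\psi^{y'}(u)}=\int_{0}^{t}\big(\psi^{y'}(u)-\psi^{y}(u)\big)\,e^{-s\psi^{y}(u)}e^{-(t-s)\psi^{y'}(u)}\,\mathrm{d}s
\]
gives, in physical space,
\[
f_{t}^{y}-f_{t}^{y'}=\int_{0}^{t}f_{s}^{y}*\big[(\mathcal{A}^{y}-\mathcal{A}^{y'})f_{t-s}^{y'}\big]\,\mathrm{d}s ,
\]
whose integrand, having the same Fourier transform, may equally be written $[(\mathcal{A}^{y}-\mathcal{A}^{y'})f_{s}^{y}]*f_{t-s}^{y'}$. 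I would split the $s$-integral at $t/2$, using the first form on $[0,t/2]$ (so the difference operator acts on a factor whose time $t-s\ge t/2$) and the second on $[t/2,t]$ (so it acts on a factor whose time $s\ge t/2$). For the gradient estimate one brings $\nabla$ inside, which commutes with $\mathcal{A}^{y}$; for the integro-differential estimate \eqref{esti:f_t_integral_differ_of_y_y'} one applies the second $h$-difference to the convolution and, again, commutes it past $\mathcal{A}^{y}-\mathcal{A}^{y'}$ onto the factor whose time variable stays $\ge t/2$.

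For the pointwise bound on the error operator one uses $(\mathcal{A}^{y}-\mathcal{A}^{y'})h(w)=\int_{\Rd}\big[h(w+z)-h(w)-\chi_{\alpha}(z)z\cdot\nabla h(w)\big](n(y,z)-n(y',z))|z|^{-d-\alpha}\,\mathrm{d}z$ together with $|n(y,z)-n(y',z)|\le\kappa_{2}(|y-y'|^{\theta}\wedge c_{0})$ and the fractional estimates of Lemma \ref{lem:frac esti f_t}, which by Remark \ref{rem: f_t^y} apply to $f^{y},f^{y'}$; this gives $|(\mathcal{A}^{y}-\mathcal{A}^{y'})f_{u}^{y'}(w)|\le C(|y-y'|^{\theta}\wedge1)\varrho_{0}^{0}(u,w)$ when $\alpha\neq1$, the analogue for $\nabla f_{u}^{y'}$ with $\varrho_{-1}^{0}$ in place of $\varrho_{0}^{0}$ (obtained from the proof of Lemma \ref{lem:frac esti f_t} with $g_t$ replaced by $\nabla g_t$ and \eqref{bound:gradientg_t}), and an iterated version bounded by $\varrho_{-\alpha}^{0}$ for \eqref{esti:f_t_integral_differ_of_y_y'}. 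Inserting these together with $f_{u}^{\cdot}(w)\le C\varrho_{\alpha}^{0}(u,w)$ (Lemma \ref{lem: upper esti for f_t}), the spatial convolutions are controlled by \eqref{esti2:rho} — all needed instances have $\beta$-indices equal to $0$, so e.g. $\int_{\Rd}\varrho_{\alpha}^{0}(t-\sigma,x-w)\varrho_{\alpha}^{0}(\sigma,w)\,\mathrm{d}w\le C\varrho_{\alpha}^{0}(t,x)$ for $0<\sigma<t$ — and the remaining $s$-integrals are elementary (e.g. $\int_{0}^{t/2}(t-s)^{-1}\mathrm{d}s=\ln2$, $\int_{0}^{t/2}(t-s)^{-2}\mathrm{d}s\le t^{-1}$, $\int_{0}^{t/2}(t-s)^{-1-1/\alpha}\mathrm{d}s\le Ct^{-1/\alpha}$). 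This produces \eqref{esti:f_t_differof_y_y'}, \eqref{esti:gradient_f_t_differof_y_y'}, \eqref{esti:f_t_integral_differ_of_y_y'} with leading terms $\varrho_{\alpha}^{0}$, $\varrho_{\alpha-1}^{0}$, $\varrho_{0}^{0}$; the case $|y-y'|\ge1$ follows at once from the triangle inequality and the individual estimates for $f_{t}^{y}$ and $f_{t}^{y'}$, which supplies the truncation $|y-y'|^{\theta}\wedge1$. The argument thus in fact delivers the bounds with only these single terms, from which the (weaker) forms in the statement follow; the $\varrho_{\alpha-\gamma}^{\gamma}$-type companions, with $\gamma\in(0,\alpha/4)$ — the restriction being exactly what keeps the data inside the range of the convolution inequalities — are the shape convenient for the Levi iteration of Section 4.

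The step I expect to be the main obstacle is the case $\alpha=1$. There the endpoint time-integrability forces one to use $\int|\Delta_{h}^{(2)}f_{u}^{y'}|\,|h|^{-d-1}\mathrm{d}h$, and the bound available from Lemma \ref{lem:Assume ln} carries a factor $1+\ln(u^{-1})$ which, one checks, cannot be absorbed into $\varrho_{1}^{0}$ (nor into $\varrho_{1}^{0}+\varrho_{1-\gamma}^{\gamma}$, as the ratio blows up near $x=0$). The remedy is that $\delta n:=n(y,\cdot)-n(y',\cdot)$ inherits the cancellation \eqref{condition2forK_a=00003D00003D1}, so that in $(\mathcal{A}^{y}-\mathcal{A}^{y'})f_{u}^{y'}$ the first-order compensator — precisely the term that, after the $t$-rescaling of the common Gaussian-type factor $g_{t}$, produces the logarithm — integrates to $0$ against $\delta n(z)|z|^{-d-1}\mathrm{d}z$ over every spherical shell; keeping this cancellation instead of bounding the integrand in absolute value restores a logarithm-free estimate and closes the argument uniformly in $\alpha\in(0,2)$.
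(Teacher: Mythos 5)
Your route is genuinely different from the paper's and, for $\alpha\neq1$, it works and in fact yields a sharper conclusion. The paper proves Lemma~\ref{Lemma 12} by establishing the identity \eqref{identity: Duhamel f_t}, in which the subtraction $-f_{t-s}^{y'}(x)$ (legitimate because $\int_{\Rd}(\mathcal{A}^{y}-\mathcal{A}^{y'})(f_{s}^{y}(x-\cdot))(z)\,\mathrm{d}z=0$) is the essential device: via the H\"older estimate \eqref{Coro 1, eq} it supplies a factor $\bigl((t-s)^{-1/\alpha}|z-x|\bigr)^{\gamma}$ that tames the singularity $(\mathcal{A}^{y}-\mathcal{A}^{y'})(f_{s}^{y}(x-\cdot))\sim s^{-1}\varrho_\alpha^0(s,\cdot)$ as $s\downarrow0$, and the parameter $\gamma$ together with the companion terms $\varrho_{\alpha-\gamma}^{\gamma}$, $\varrho_{\alpha-\gamma-1}^{\gamma}$, $\varrho_{-\gamma}^{\gamma}$ are exactly what this trade leaves behind. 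Your alternative keeps the plain Duhamel formula and instead splits the $s$-integral at $t/2$, always placing the error operator on the factor whose time parameter stays $\ge t/2$; after factoring $\varrho_{0}^{0}(\tau,\cdot)=\tau^{-1}\varrho_{\alpha}^{0}(\tau,\cdot)$ and using the semigroup bound $\varrho_{\alpha}^{0}(s,\cdot)*\varrho_{\alpha}^{0}(t-s,\cdot)\lesssim\varrho_{\alpha}^{0}(t,\cdot)$, the $s$-integrals $\int_{0}^{t/2}(t-s)^{-1}\mathrm{d}s=\ln2$, $\int_{0}^{t/2}(t-s)^{-2}\mathrm{d}s\le t^{-1}$ close the computation. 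This is more elementary, it needs no H\"older interpolation in $z$ and no $\gamma$, and so gives the stronger one-term bounds $\varrho_{\alpha}^{0}$, $\varrho_{\alpha-1}^{0}$, $\varrho_{0}^{0}$. The cost is two auxiliary estimates the paper does not isolate --- a pointwise bound on $(\mathcal{A}^{y}-\mathcal{A}^{y'})\nabla f_{u}^{y'}$ of order $\varrho_{-1}^{0}(u,\cdot)$ and the iterated quantity $\int_{\Rd}\bigl|(\mathcal{A}^{y}-\mathcal{A}^{y'})f_{u}^{y'}(z+h)-(\mathcal{A}^{y}-\mathcal{A}^{y'})f_{u}^{y'}(z)-\chi_\alpha(h)h\cdot\nabla(\mathcal{A}^{y}-\mathcal{A}^{y'})f_{u}^{y'}(z)\bigr|\,|h|^{-d-\alpha}\mathrm{d}h\lesssim\varrho_{-\alpha}^{0}(u,\cdot)$ --- but both follow from the same scaling argument used for $f_{u}^{y'}$ itself, so you should state and prove them explicitly.

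Your diagnosis of the $\alpha=1$ case is accurate, but the proposed remedy closes only two of the three estimates. The $\delta n$-cancellation inherited from \eqref{condition2fornxh_a=00003D00003D1} does kill the logarithm in the \emph{signed} quantities $(\mathcal{A}^{y}-\mathcal{A}^{y'})f_{u}^{y'}$ and $(\mathcal{A}^{y}-\mathcal{A}^{y'})\nabla f_{u}^{y'}$, so \eqref{esti:f_t_differof_y_y'} and \eqref{esti:gradient_f_t_differof_y_y'} go through log-free. It does \emph{not} rescue \eqref{esti:f_t_integral_differ_of_y_y'}: there the absolute value sits inside the outer $h$-integral, whose kernel is $|h|^{-d-1}$ and not $\delta n(h)|h|^{-d-1}$, so there is no sign to cancel. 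Writing $D=f_{t}^{y}-f_{t}^{y'}$, the compensator $\mathbf{1}_{\{|h|\le1\}}h\cdot\nabla D(x)$ alone contributes $\int_{\{t<|h|\le1\}}|h|^{-d}\mathrm{d}h\cdot|\nabla D(x)|=c\ln(t^{-1})\,|\nabla D(x)|$, and for $|x|\lesssim t$ this is not dominated by $\varrho_{0}^{0}(t,x)+\varrho_{-\gamma}^{\gamma}(t,x)$. You should note, however, that this is not a defect of your argument alone: the paper's own proof has the same gap. It defers to \cite[Theorem~2.5]{chen2015heat}, where the symmetry $n(x,h)=n(x,-h)$ removes the logarithm already at the level of their Theorem~2.4, while the replacement here, Lemma~\ref{lem:Assume ln}, retains it; and the paper then reinserts the factor $\bigl(1+\ln[(t-s)^{-1}]\bigr)$ by hand in \eqref{esti1: AJ(t,s,x)}--\eqref{esti2: AJ(t,s,x)} on precisely the term that uses \eqref{esti:f_t_integral_differ_of_y_y'}. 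So your observation that the $\varrho$-terms cannot absorb the log correctly identifies a missing logarithmic factor in \eqref{esti:f_t_integral_differ_of_y_y'} when $\alpha=1$; the cancellation trick alone does not remove it, and the clean statement should simply be replaced by its logarithmic version, as the paper's later computations already implicitly assume.
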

\begin{proof}
The proof is almost the same as that of \cite[Theorem 2.5]{chen2015heat},
and we only need to verify that for $t>0,\,x,y,y'\in\Rd$,
\begin{equation}
f_{t}^{y}(x)-f_{t}^{y'}(x)=\int_{0}^{t}\int_{\mathbb{R}^{d}}\left(f_{t-s}^{y'}(z)-f_{t-s}^{y'}(x)\right)(\mathcal{A}^{y}-\mathcal{A}^{y'})\left(f_{s}^{y}(x-\cdot)\right)(z)\mathrm{d}z\mathrm{d}s.\label{identity: Duhamel f_t}
\end{equation}
By (\ref{defi: f_t^y}) and (\ref{defi: A^y}), we have
\[
(\mathcal{A}^{y}-\mathcal{A}^{y'})\left(f_{s}^{y}(x-\cdot)\right)(z)=-\frac{1}{(2\pi)^{d}}\int_{\Rd}\left(\psi^{y}(u)-\psi^{y'}(u)\right)e^{-s\psi^{y}(u)}e^{-iu\cdot(x-z)}\mathrm{d}u.
\]
Note that $\int_{\mathbb{R}^{d}}(\mathcal{A}^{y}-\mathcal{A}^{y'})\left(f_{s}^{y}(x-\cdot)\right)(z)\mathrm{d}z=0.$
By the Fubini's theorem, we have that for $0<\varepsilon<t$,
\begin{align}
 & \int_{\varepsilon}^{t}\int_{\mathbb{R}^{d}}\left(f_{t-s}^{y'}(z)-f_{t-s}^{y'}(x)\right)(\mathcal{A}^{y}-\mathcal{A}^{y'})\left(f_{s}^{y}(x-\cdot)\right)(z)\mathrm{d}z\mathrm{d}s\nonumber \\
 & \quad=\int_{\varepsilon}^{t}\int_{\mathbb{R}^{d}}f_{t-s}^{y'}(z)(\mathcal{A}^{y}-\mathcal{A}^{y'})\left(f_{s}^{y}(x-\cdot)\right)(z)\mathrm{d}z\mathrm{d}s\nonumber \\
 & \quad=-\frac{1}{(2\pi)^{d}}\int_{\varepsilon}^{t}\int_{\mathbb{R}^{d}}f_{t-s}^{y'}(z)\left(\int_{\Rd}\left(\psi^{y}(u)-\psi^{y'}(u)\right)e^{-s\psi^{y}(u)}e^{-iu\cdot(x-z)}\mathrm{d}u\right)\mathrm{d}z\mathrm{d}s\nonumber \\
 & \quad=-\frac{1}{(2\pi)^{d}}\int_{\varepsilon}^{t}\int_{\mathbb{R}^{d}}\left(\psi^{y}(u)-\psi^{y'}(u)\right)e^{-s\psi^{y}(u)-iu\cdot x}e^{-(t-s)\psi^{y'}(u)}\mathrm{d}u\mathrm{d}s\nonumber \\
 & \quad=\frac{1}{(2\pi)^{d}}\int_{\mathbb{R}^{d}}e^{-iu\cdot x-t\psi^{y'}(u)}\left(e^{-t\psi^{y}(u)}e^{t\psi^{y'}(u)}-e^{-\varepsilon\psi^{y}(u)}e^{\varepsilon\psi^{y'}(u)}\right)\mathrm{d}u\nonumber \\
 & \quad=f_{t}^{y}(x)-\frac{1}{(2\pi)^{d}}\int_{\mathbb{R}^{d}}e^{-iu\cdot x-(t-\varepsilon)\psi^{y'}(u)}e^{-\varepsilon\psi^{y}(u)}\mathrm{d}u.\label{Prop 1, eq 1}
\end{align}

By (\ref{esti2:rho}), (\ref{MPAFLsect31-1}), (\ref{Coro 1, eq})
and the dominated convergence theorem, we can let $\varepsilon\to0$
in (\ref{Prop 1, eq 1}) to obtain (\ref{identity: Duhamel f_t}).
\end{proof}
For $t\in(0,1]$ and $x,y\in\Rd$, define
\begin{equation}
q(t,x,y):=f_{t}^{y}(y-x)\label{defi: q(t,x,y)}
\end{equation}
and
\begin{align*}
F(t,x,y):= & \left(\mathcal{A}-\mathcal{A}^{y}\right)q(t,\cdot,y)(x)\\
= & \int_{\mathbb{R}^{d}\backslash\{0\}}\Big[q(t,x+h,y)-q(t,x,y)\\
 & \qquad-\chi_{\alpha}(h)h\cdot\nabla_{x}q(t,x,y)\Big]\frac{\left(n(x,h)-n(y,h)\right)}{|h|^{d+\alpha}}\mathrm{d}h.
\end{align*}
For functions $\varphi_{1},\varphi_{2}$ on $(0,1]\times\Rd\times\Rd$,
we introduce the notation $\varphi_{1}\otimes\varphi_{2}$ by
\[
\varphi_{1}\otimes\varphi_{2}(t,x,y):=\int_{0}^{t}\int_{\Rd}\varphi_{1}(t-s,x,z)\varphi_{2}(s,z,y)\mathrm{d}z\mathrm{d}s,\quad t\in(0,1],\ x,y\in\Rd.
\]

Next, we study the convergence of the series $\sum_{n=1}^{\infty}F^{\otimes n}$,
where $F^{\otimes1}:=F$ and $F^{\otimes n}:=F\otimes\left(F^{\otimes(n-1)}\right)$.
Recall that the constant $\theta$ is given in (\ref{Condition:Holder}).
In the rest of this paper, let $\hat{\theta}:=\theta\wedge(\alpha/4).$
\begin{lem}
 \noun{(}\emph{i}\noun{) }Define
\begin{equation}
\Phi(t,x,y):=\sum_{n=1}^{\infty}F^{\otimes n}(t,x,y),\quad(t,x,y)\in(0,1]\times\Rd\times\Rd.\label{defi Phi}
\end{equation}
Then the series on the right-hand side of \emph{(\ref{defi Phi})
converges locally uniformly on $(0,1]\times\Rd\times\Rd$. Moreover,}
$\Phi$ is continuous on $(0,1]\times\Rd\times\Rd$, and there exists
a constant $C_{26}=C_{26}(d,\alpha,\kappa_{0},\kappa_{1},\kappa_{2},\theta)>0$
such that for all $(t,x,y)\in(0,1]\times\Rd\times\Rd$,
\begin{equation}
|\Phi(t,x,y)|\le C_{26}\left(\varrho_{\hat{\theta}}^{0}(t,x-y)+\varrho_{0}^{\hat{\theta}}(t,x-y)\right).\label{esti:Phi}
\end{equation}
\noun{(}\emph{ii}\noun{)} Given $\gamma\in(0,\hat{\theta})$, there
exists a constant $C_{27}=C_{27}(d,\alpha,\kappa_{0},\kappa_{1},\kappa_{2},\theta,\gamma)>0$
such that for all $t\in(0,1]$ and $x,x',y\in\Rd$,
\begin{align*}
 & |\Phi(t,x,y)-\Phi(t,x',y)|\\
 & \qquad\le C_{27}\left(|x-x'|^{\hat{\theta}-\gamma}\wedge1\right)\left\{ \left(\varrho_{\gamma}^{0}+\varrho_{\gamma-\hat{\theta}}^{\hat{\theta}}\right)(t,x-y)+\left(\varrho_{\gamma}^{0}+\varrho_{\gamma-\hat{\theta}}^{\hat{\theta}}\right)(t,x'-y)\right\} .
\end{align*}
 \label{lem:esti_F^n}
\end{lem}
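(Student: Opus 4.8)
The plan is to follow the classical Levi parametrix bookkeeping, exactly as in \cite[Section 3]{chen2015heat}, adapted to the estimates we have at hand. The starting point is a pointwise bound on the kernel $F(t,x,y)$ itself. Writing $q(t,\cdot,y)(x)=f_t^y(y-x)$, we have $F(t,x,y)=(\mathcal{A}-\mathcal{A}^y)q(t,\cdot,y)(x)$, and the integrand carries the factor $n(x,h)-n(y,h)$. There are two regimes. When $|x-y|\le t^{1/\alpha}$, the H\"older bound \eqref{Condition:Holder} gives $|n(x,h)-n(y,h)|\le\kappa_2|x-y|^\theta\le\kappa_2(|x-y|^{\hat\theta}\wedge1)$ and, combined with Lemma \ref{lem:frac esti f_t} (or Lemma \ref{lem:Assume ln} if $\alpha=1$) applied to $f_t^y$ via Remark \ref{rem: f_t^y}, one gets a bound of order $(|x-y|^{\hat\theta}\wedge1)\varrho_0^0(t,x-y)$, which is dominated by $\varrho_{\hat\theta}^0(t,x-y)+\varrho_0^{\hat\theta}(t,x-y)$. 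When $|x-y|>t^{1/\alpha}$ one uses instead only the $L^\infty$ bound $|n(x,h)-n(y,h)|\le 2\kappa_1$ together with the same fractional estimate; here the extra decay comes from $\varrho_0^0(t,x-y)$ having a factor $(t^{1/\alpha}+|x-y|)^{-d-\alpha}$ which already beats $t^{-d/\alpha}$, and one checks that $\varrho_0^0(t,x-y)\le(|x-y|^{\hat\theta}\wedge1)t^{-\hat\theta/\alpha}\varrho_0^0(t,x-y)$ up to constants in this regime, hence again is controlled by $\varrho_{\hat\theta}^0+\varrho_0^{\hat\theta}$. The upshot is
\[
|F(t,x,y)|\le C\left(\varrho_{\hat\theta}^0(t,x-y)+\varrho_0^{\hat\theta}(t,x-y)\right),\qquad 0<t\le1.
\]
(In the case $\alpha=1$ one picks up an extra $(1+\ln(t^{-1}))$ factor from Lemma \ref{lem:Assume ln}, which is harmless since $\ln(t^{-1})\le C_\epsilon t^{-\epsilon}$ for any $\epsilon>0$, so it can be absorbed by shrinking $\hat\theta$ slightly; this is the only place where the weaker $\alpha=1$ estimates enter, and as the introduction notes they do not affect the argument.)

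Granted the bound on $F$, part (i) follows by iterating the convolution inequality \eqref{esti3:rho}. One proves by induction on $n$ that
\[
|F^{\otimes n}(t,x,y)|\le C^n\left(\prod_{j=1}^{n-1}B\!\left(\tfrac{j\hat\theta}{\alpha},\tfrac{\hat\theta}{\alpha}\right)\right)\left(\varrho_{n\hat\theta}^0(t,x-y)+\varrho_{(n-1)\hat\theta}^{\hat\theta}(t,x-y)\right);
\]
the inductive step is a direct application of \eqref{esti3:rho} with $\beta_1,\beta_2\in\{0,\hat\theta\}\subset[0,\alpha/4]$ and $\gamma_1,\gamma_2$ chosen so that $\gamma_i+\beta_i>0$, noting that $\varrho$ with a negative lower index still obeys the stated hypotheses as long as $\gamma+\beta>0$. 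Because $B(j\hat\theta/\alpha,\hat\theta/\alpha)\sim \Gamma(\hat\theta/\alpha)(j\hat\theta/\alpha)^{-\hat\theta/\alpha}$ decays like a negative power of $j$, the product $\prod_{j=1}^{n-1}B(j\hat\theta/\alpha,\hat\theta/\alpha)$ decays faster than any geometric sequence (it behaves like $\Gamma(\hat\theta/\alpha)^n/\Gamma(n\hat\theta/\alpha)$ times a constant to the $n$), so $\sum_n C^n\prod_{j<n}B(\cdots)<\infty$ and the series \eqref{defi Phi} converges absolutely and locally uniformly on $(0,1]\times\Rd\times\Rd$. Summing the bounds on $|F^{\otimes n}|$ and using $\varrho_{n\hat\theta}^0\le t^{(n-1)\hat\theta/\alpha}\varrho_{\hat\theta}^0$ and $\varrho_{(n-1)\hat\theta}^{\hat\theta}\le t^{(n-1)\hat\theta/\alpha}\varrho_0^{\hat\theta}$ for $t\le1$ gives \eqref{esti:Phi}. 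Continuity of $\Phi$ follows from local uniform convergence once one checks that each $F^{\otimes n}$ is continuous, which in turn reduces to continuity of $F$; the latter follows from the joint continuity of $f_t^y(x)$ and its derivatives in $(t,x,y)$ together with dominated convergence using the integrable majorant furnished by the estimate on $|F|$.

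For part (ii), the regularity in $x$, the plan is to first establish the H\"older-type estimate for $F$ itself,
\[
|F(t,x,y)-F(t,x',y)|\le C\left(|x-x'|^{\hat\theta-\gamma}\wedge1\right)\left\{\left(\varrho_\gamma^0+\varrho_{\gamma-\hat\theta}^{\hat\theta}\right)(t,x-y)+\left(\varrho_\gamma^0+\varrho_{\gamma-\hat\theta}^{\hat\theta}\right)(t,x'-y)\right\},
\]
and then propagate it through the convolution product. For the bound on $F(t,x,y)-F(t,x',y)$ one splits $F(t,x,y)-F(t,x',y)$ into a term where the difference falls on the kernel coefficient $n(x,h)-n(x',h)$ — handled by \eqref{Condition:Holder} and the fractional estimate \eqref{esti:f_t,integral_differ} — and a term where it falls on the increment of $q$, i.e. on $\delta_{f_t^y}$-type quantities, which is exactly what Lemma for the double differences ($C_{22}$, $C_{23}$) controls; one interpolates between the trivial bound (of order $\varrho_0^0$, no gain) and the Lipschitz bound (order $t^{-1/\alpha}|x-x'|\,\varrho_0^0$) to get the fractional power $|x-x'|^{\hat\theta-\gamma}$ with the compensating factors $\varrho_\gamma^0,\varrho_{\gamma-\hat\theta}^{\hat\theta}$. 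Once $F$ obeys this estimate, one writes $F^{\otimes n}(t,x,y)-F^{\otimes n}(t,x',y)$ by differencing only the first factor, $F(t-s,x,z)-F(t-s,x',z)$, leaving $F^{\otimes(n-1)}(s,z,y)$ intact, and applies \eqref{esti2:rho}–\eqref{esti3:rho} to the resulting convolution; the $|x-x'|^{\hat\theta-\gamma}\wedge1$ factor pulls out and the remaining index bookkeeping is the same Beta-function summation as in part (i), with $\gamma$ playing the role of a small positive shift that keeps all lower indices admissible. Summing over $n$ yields the claim.

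The main obstacle is the pointwise estimate on $|F(t,x,y)|$ and its H\"older counterpart in the regime $|x-y|>t^{1/\alpha}$: because $n(x,h)$ is not assumed symmetric in $h$, one cannot exploit the cancellation that in \cite{chen2015heat} allowed a cleaner rescaling argument, and in the borderline case $\alpha=1$ one is forced to work with the logarithmically-worse estimates of Lemma \ref{lem:Assume ln} and Lemma \ref{Lemma 2: delta}. The resolution is the trade-off already indicated above — absorb the $\ln(t^{-1})$ into an arbitrarily small power of $t$ at the cost of replacing $\hat\theta$ by $\hat\theta-\epsilon$ — after which the Levi iteration proceeds verbatim as in the symmetric case. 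Everything downstream of the bound on $F$ is routine application of Lemma \ref{esti3:rho} and the asymptotics of the Beta function.
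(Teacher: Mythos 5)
Your proposal reconstructs the argument that the paper itself omits by citing \cite[Theorem~4.1]{chen2015heat}: a pointwise bound $|F(t,x,y)|\le C(|x-y|^{\hat\theta}\wedge1)\varrho_0^0(t,x-y)=C\varrho_0^{\hat\theta}(t,x-y)$ (which your two-case split in fact proves in one line, since the H\"older/$L^\infty$ bound on $n(x,h)-n(y,h)$ already gives $C(|x-y|^{\hat\theta}\wedge1)$ uniformly), followed by iteration of \eqref{esti3:rho}, the super-exponential decay of $\prod_j B(j\hat\theta/\alpha,\hat\theta/\alpha)=\Gamma(\hat\theta/\alpha)^{n}/\Gamma(n\hat\theta/\alpha)$, and the same scheme with one factor differenced for part~(ii). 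This is the intended approach and the bookkeeping you outline is sound.

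One caveat concerns the $\alpha=1$ case. Your suggestion to absorb the $(1+\ln t^{-1})$ factor from Lemma~\ref{lem:Assume ln} ``by shrinking $\hat\theta$ slightly'' does not, as written, prove the lemma as stated: trading the logarithm for $C_\epsilon t^{-\epsilon}$ replaces the index $\hat\theta$ by $\hat\theta-\epsilon\alpha$ throughout the iteration and hence yields $|\Phi|\le C(\varrho_{\hat\theta-\epsilon\alpha}^0+\varrho_{-\epsilon\alpha}^{\hat\theta})$, which is strictly weaker than \eqref{esti:Phi}. The clean way to recover the stated bound is to carry the $(1+\ln t^{-1})^n$ factor explicitly through the induction and then observe that
\[
\sup_{0<t\le1}\bigl(1+\ln t^{-1}\bigr)^n t^{(n-1)\hat\theta/\alpha}\le C_0^n
\]
for a constant $C_0=C_0(\alpha,\hat\theta)$, so the accumulated logarithms are only geometrically large in $n$ and are therefore annihilated by the factor $1/\Gamma(n\hat\theta/\alpha)$; this gives \eqref{esti:Phi} with the index $\hat\theta$ intact. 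This is a small but genuine gap in the written argument, though the underlying idea (the Beta-function decay dominates) is exactly right.
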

\begin{proof}
In view of Lemma \ref{lem: upper esti for f_t} \textendash{} Lemma
\ref{Lemma 12} and Remark \ref{rem: f_t^y}, the proof is essentially
the same as in \cite[Theorem 4.1]{chen2015heat}. We omit the details.
\end{proof}
By (\ref{esti3:rho}), (\ref{upperbound:f_t}) and (\ref{esti:Phi}),
there exists a constant $C_{28}=C_{28}(d,\alpha,\kappa_{0},\kappa_{1},\kappa_{2},\theta)>0$
such that
\begin{equation}
q\otimes\Phi(t,x,y)\le C_{28}\left(\varrho_{\alpha+\hat{\theta}}^{0}+\varrho_{\alpha}^{\hat{\theta}}\right)(t,x-y),\quad t\in(0,1],\ x,y\in\Rd.\label{esti: q tensor Phi}
\end{equation}
It follows that
\begin{equation}
p(t,x,y):=q(t,x,y)+q\otimes\Phi(t,x,y),\quad(t,x,y)\in(0,1]\times\Rd\times\Rd,\label{defi: p(t,x,y)}
\end{equation}
is well-defined.
\begin{prop}
\label{Prop. esti: p(t,x,y) }There exists a constant $C_{29}=C_{29}(d,\alpha,\kappa_{0},\kappa_{1},\kappa_{2},\theta)>0$
such that
\begin{align}
|p(t,x,y)| & \le C_{29}\varrho_{\alpha}^{0}(t,x-y),\quad(t,x,y)\in(0,1]\times\Rd\times\Rd.\label{main esti: p}
\end{align}
Moreover, the function $(t,x,y)\mapsto p(t,x,y)$ is continuous on
$(0,1]\times\Rd\times\Rd$.
\end{prop}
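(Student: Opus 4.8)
\emph{Strategy.} The plan is to read \eqref{main esti: p} off the definition \eqref{defi: p(t,x,y)} by bounding $q$ and $q\otimes\Phi$ separately and using $0<t\le1$ to absorb the ``more regular'' densities into $\varrho_{\alpha}^{0}$, and then to deduce the continuity of $p$ from the continuity of $q$ and of $q\otimes\Phi$ in turn.

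\emph{The pointwise bound.} By \eqref{defi: q(t,x,y)} and Remark~\ref{rem: f_t^y}, the estimate \eqref{upperbound:f_t} applies to $f_{t}^{y}$, so $q(t,x,y)=f_{t}^{y}(y-x)\le C_{15}\,t\,(t^{1/\alpha}+|x-y|)^{-d-\alpha}=C_{15}\varrho_{\alpha}^{0}(t,x-y)$. Since $q\ge0$, one has $|q\otimes\Phi(t,x,y)|\le\int_{0}^{t}\!\int_{\Rd}q(t-s,x,z)\,|\Phi(s,z,y)|\,\mathrm{d}z\,\mathrm{d}s$, which is exactly the quantity bounded in \eqref{esti: q tensor Phi} by $C_{28}\big(\varrho_{\alpha+\hat\theta}^{0}+\varrho_{\alpha}^{\hat\theta}\big)(t,x-y)$. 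For $0<t\le1$ one has $\varrho_{\alpha+\hat\theta}^{0}(t,\cdot)=t^{\hat\theta/\alpha}\varrho_{\alpha}^{0}(t,\cdot)\le\varrho_{\alpha}^{0}(t,\cdot)$ since $\hat\theta>0$, and $\varrho_{\alpha}^{\hat\theta}(t,\cdot)\le\varrho_{\alpha}^{0}(t,\cdot)$ since $|w|^{\hat\theta}\wedge1\le1$. Adding the two contributions yields \eqref{main esti: p} with $C_{29}=C_{15}+2C_{28}$.

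\emph{Continuity.} It suffices to show that $q$ and $q\otimes\Phi$ are each continuous on $(0,1]\times\Rd\times\Rd$. For $q(t,x,y)=f_{t}^{y}(y-x)$: fix $(t_{0},x_{0},y_{0})$ with $t_{0}>0$ and write $f_{t}^{y}(y-x)-f_{t_{0}}^{y_{0}}(y_{0}-x_{0})=\big(f_{t}^{y}(y-x)-f_{t}^{y_{0}}(y-x)\big)+\big(f_{t}^{y_{0}}(y-x)-f_{t_{0}}^{y_{0}}(y_{0}-x_{0})\big)$; the first bracket tends to $0$ by \eqref{esti:f_t_differof_y_y'} of Lemma~\ref{Lemma 12} (with any fixed $\gamma\in(0,\alpha/4)$), since $|y-y_{0}|^{\theta}\wedge1\to0$ while $(\varrho_{\alpha}^{0}+\varrho_{\alpha-\gamma}^{\gamma})(t,y-x)$ stays bounded for $t$ bounded away from $0$ and $y-x$ bounded, and the second bracket vanishes by continuity of $(s,w)\mapsto f_{s}^{y_{0}}(w)$, which follows from the Fourier representation \eqref{defi: f_t^y} and dominated convergence using the bound $|\mathrm{e}^{-\mathrm{i}u\cdot w-s\psi^{y_{0}}(u)}|\le\mathrm{e}^{-sC_{12}|u|^{\alpha}}$ from \eqref{MPAFLsect31-1}. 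For $q\otimes\Phi(t,x,y)=\int_{0}^{t}\!\int_{\Rd}q(t-s,x,z)\Phi(s,z,y)\,\mathrm{d}z\,\mathrm{d}s$: on $s\in(t/2,t)$ (substitute $s=t-r$) the factor $\Phi(t-r,z,y)$ is bounded uniformly near $(t_{0},x_{0},y_{0})$ (each of $\varrho_{\hat\theta}^{0}$, $\varrho_{0}^{\hat\theta}$ at a time $\ge t_{0}/4$ is bounded), while $\int_{0}^{1}\!\int_{\Rd}q(r,x,z)\,\mathrm{d}z\,\mathrm{d}r<\infty$ uniformly in $x$ by \eqref{upperbound:f_t} and \eqref{esti1:rho}; on $s\in(0,t/2)$ the factor $q(t-s,x,z)\le C_{15}(t-s)^{-d/\alpha}$ is bounded uniformly, while $\int_{0}^{1}\!\int_{\Rd}|\Phi(s,z,y)|\,\mathrm{d}z\,\mathrm{d}s<\infty$ uniformly in $y$ by \eqref{esti:Phi}, \eqref{esti1:rho} and the translation-invariance of $\int_{\Rd}\varrho_{\gamma}^{\beta}(s,z-y)\,\mathrm{d}z$ in $y$. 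Using the continuity of $q$ just established and of $\Phi$ from Lemma~\ref{lem:esti_F^n}, a dominated-convergence argument (truncating the $s$-integral away from $s=0$ and $s=t$, where $\Phi$ and $q$ respectively are singular but integrably so) gives the continuity of $q\otimes\Phi$, and hence of $p$.

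I expect the only genuinely delicate point to be this last dominated-convergence step: the singularity of $\Phi$ at $s=0$ and that of $q$ at $s=t$ must be separated by the splitting, and the domination must be kept uniform over a neighbourhood even though the location $\{z=y\}$ of the $\Phi$-singularity moves with $y$ (so that one works with the truncated integrals, or equivalently invokes the generalised dominated convergence theorem). Everything else is an immediate consequence of the estimates already established.
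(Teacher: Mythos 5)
Your proof is correct and follows essentially the same route as the paper: the pointwise bound is read off \eqref{upperbound:f_t} and \eqref{esti: q tensor Phi} (the paper simply calls this a ``simple consequence''; your explicit absorption of $\varrho_{\alpha+\hat\theta}^{0}$ and $\varrho_{\alpha}^{\hat\theta}$ into $\varrho_{\alpha}^{0}$ on $(0,1]$ is exactly what is meant), and continuity is obtained by dominated convergence from the Fourier representation of $q$ and the continuity of $\Phi$. One minor simplification you could make for the continuity of $q$: since $\psi^{y}(u)$ is itself continuous in $y$ (by the uniform H\"older condition on $n$) and $|e^{-iu\cdot(y-x)-t\psi^{y}(u)}|\le e^{-c_{1}t|u|^{\alpha}}$ uniformly in $(x,y)$, the Fourier integral \eqref{defi: f_t^y} already gives joint continuity of $(t,x,y)\mapsto q(t,x,y)$ in one stroke, without the two-bracket decomposition via Lemma~\ref{Lemma 12}; the paper takes this shorter route. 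Your explicit $s\in(0,t/2)$ versus $s\in(t/2,t)$ splitting for the continuity of $q\otimes\Phi$ supplies genuine detail that the paper elides with ``again by dominated convergence,'' and the dominations you set up on each piece are the right ones.
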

\begin{proof}
The estimate (\ref{main esti: p}) is a simple consequence of (\ref{upperbound:f_t})
and (\ref{esti: q tensor Phi}). By (\ref{MPAFLsect31-1}) and Assumption
\ref{ass1:the function n}, there exists a constant $c_{1}=c_{1}(d,\alpha,\kappa_{0})>0$
with
\[
|\exp(-iu\cdot x-t\psi^{y}(u))|\le\exp(-c_{1}t|u|^{\alpha}),\quad\forall t>0,\ x,y,u\in\Rd,
\]
where $\psi^{y}$ is given in (\ref{defi: psi^y}). The continuity
of $(t,x,y)\mapsto q(t,x,y)$ now follows from (\ref{defi: q(t,x,y)}),
(\ref{defi: f_t^y}) and the dominated convergence. Since $q(t,x,y)$
and $\Phi(t,x,y)$ are both continuous, again by dominated convergence,
the function $(0,1]\times\Rd\times\Rd\ni(t,x,y)\mapsto p(t,x,y)$
is also continuous.
\end{proof}
In the remaining part of this section we will show that $p(t,x,y)$
is the transition density of the Markov process associated with $\mathcal{A}$.
The ideas for the proof of the next two propositions come from \cite[Chap.~ 1, Theorems 4 - 5]{MR0181836}.
\begin{prop}
Suppose that the function $\varphi:(0,1]\times\Rd\to\mathbb{R}$ is
continuous and such that for all $x,x'\in\Rd$ and $t\in(0,1]$,
\begin{equation}
|\varphi(t,x)|\le c_{\varphi}t^{-1+\hat{\theta}/\alpha}\label{condition1:f(t,x)}
\end{equation}
and
\begin{equation}
|\varphi(t,x)-\varphi(t,x')|\le c_{\varphi}t^{-1+\gamma/\alpha}\left(|x-x'|^{\hat{\theta}-\gamma}\wedge1\right),\label{condition2:f(t,x)}
\end{equation}
 where $c_{\varphi}>0$ and $\gamma\in(0,\hat{\theta})$ are constants.
Consider the function $V$ defined by
\begin{equation}
V(t,x):=\int_{0}^{t}\int_{\Rd}q(t-s,x,z)\varphi(s,z)\mathrm{d}z\mathrm{d}s,\quad(t,x)\in(0,1]\times\Rd.\label{defi:V}
\end{equation}
Then for each $t\in(0,1]$, $\mathcal{A}V(t,\cdot)$ is well-defined
and
\begin{equation}
\mathcal{A}V(t,\cdot)(x)=\int_{0}^{t}\int_{\Rd}\mathcal{A}q(t-s,\cdot,z)(x)\varphi(s,z)\mathrm{d}z\mathrm{d}s,\quad x\in\Rd.\label{eq:AV}
\end{equation}
We also have the estimate
\begin{equation}
{\color{red}{\color{black}\left|\mathcal{A}V(t,\cdot)(x)\right|\le C_{30}\left(1+\ln\left(t^{-1}\right)\right)t^{-1},\quad\forall x\in\Rd,\ t\in(0,1],}}\label{esti: prop 2, AV}
\end{equation}
where $C_{30}=C_{30}(d,\alpha,\kappa_{0},\kappa_{1},\kappa_{2},\theta,\gamma,c_{\varphi})>0$
is a constant. \label{prop1:layerpotential}
\end{prop}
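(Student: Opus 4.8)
The plan is to prove formula \eqref{eq:AV} first by a standard Levi-method dominated-convergence argument, and then to derive the bound \eqref{esti: prop 2, AV} from the $\varrho$-estimates already available for $q$ (i.e.\ for $f^y_t$) in Lemmas \ref{lem:frac esti f_t} and \ref{lem:Assume ln} together with the convolution inequalities \eqref{esti1:rho}--\eqref{esti3:rho}.

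For \eqref{eq:AV}: fix $t\in(0,1]$ and write $\mathcal{A}=\mathcal{A}^{x}+(\mathcal{A}-\mathcal{A}^{x})$, or more simply apply $\mathcal{A}$ directly to the defining integral \eqref{defi:V}. Because $\mathcal{A}$ is an integro-differential operator in $x$ only, one would like to interchange $\mathcal{A}$ with the $\mathrm{d}z\,\mathrm{d}s$ integration. To justify this, split the $s$-integral at $s=t/2$. On $s\in(0,t/2]$ the factor $q(t-s,x,z)$ is smooth in $x$ with derivatives uniformly controlled on $(t/2,t]$, so Fubini and differentiation under the integral are immediate. On $s\in(t/2,t]$ the singularity of $\varphi(s,\cdot)$ near $s=t$ is absorbed by \eqref{condition1:f(t,x)}, which gives an integrable factor $s^{-1+\hat\theta/\alpha}$, while the $x$-regularity needed to apply $\mathcal{A}$ to $q(t-s,\cdot,z)$ is supplied by Lemma \ref{lem:frac esti f_t} (or Lemma \ref{lem:Assume ln} when $\alpha=1$): the function $h\mapsto q(t-s,x+h,z)-q(t-s,x,z)-\chi_\alpha(h)h\cdot\nabla_x q(t-s,x,z)$ is integrable against $|h|^{-d-\alpha}\,\mathrm dh$ with a bound of the form $(1+\ln((t-s)^{-1}))\varrho^0_0(t-s,x-z)$. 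Combining this with the local uniform integrability in $z$ (the $z$-integral of $\varrho^0_0(t-s,x-z)$ against $\varrho^{\hat\theta}_\alpha(s,z-y)$-type quantities is finite by \eqref{esti1:rho}) legitimizes the interchange, yielding \eqref{eq:AV}; the continuity hypothesis on $\varphi$ and dominated convergence handle the passage.

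For \eqref{esti: prop 2, AV}: using \eqref{eq:AV} we must bound
\[
\int_0^t\!\!\int_{\Rd}\bigl|\mathcal{A}q(t-s,\cdot,z)(x)\bigr|\,|\varphi(s,z)|\,\mathrm{d}z\,\mathrm{d}s .
\]
By Lemma \ref{lem:frac esti f_t} / Lemma \ref{lem:Assume ln} (applied to $f^z_{t-s}$, valid by Remark \ref{rem: f_t^y}), $|\mathcal{A}q(t-s,\cdot,z)(x)|\le c\,(1+\ln((t-s)^{-1}))\,\varrho^0_0(t-s,x-z)$; actually one should keep the $(\mathcal{A}-\mathcal{A}^z)$ part, which is even smaller by the Hölder estimate \eqref{esti:f_t_integral_differ_of_y_y'}, but the dominant contribution is the $\mathcal{A}^z$ part bounded by $\varrho^0_0(t-s,x-z)$ up to the logarithm. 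Plugging in $|\varphi(s,z)|\le c_\varphi s^{-1+\hat\theta/\alpha}$ and using $\int_{\Rd}\varrho^0_0(t-s,x-z)\,\mathrm dz = c\,(t-s)^{-1}$ from \eqref{esti1:rho}, the spatial integral disappears and one is left with
\[
c\int_0^t \bigl(1+\ln((t-s)^{-1})\bigr)(t-s)^{-1} s^{-1+\hat\theta/\alpha}\,\mathrm{d}s .
\]
Splitting at $s=t/2$: on $(t/2,t)$ bound $s^{-1+\hat\theta/\alpha}\le c\,t^{-1+\hat\theta/\alpha}$ and compute $\int_0^{t/2}(1+\ln r^{-1})r^{-1}\,\mathrm dr$… — here one must be careful, since $\int_0^{t/2} r^{-1}\,\mathrm dr$ diverges. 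The resolution is that near $s=t$ one should instead use the \emph{difference} estimate \eqref{condition2:f(t,x)} for $\varphi$ exactly as in the classical parametrix method: write $\mathcal{A}q(t-s,\cdot,z)(x)$ applied to $\varphi(s,z)$ and add and subtract $\varphi(s,x)$, using $\int_{\Rd}\mathcal{A}q(t-s,\cdot,z)(x)\,\mathrm dz=0$; the Hölder modulus $|z-x|^{\hat\theta-\gamma}\wedge1$ then combines with $\varrho^0_0(t-s,x-z)$ to produce a factor $(t-s)^{(\hat\theta-\gamma)/\alpha - 1}$ after integrating in $z$ (again by \eqref{esti1:rho}, now with $\beta=\hat\theta-\gamma$), which is integrable at $s=t$ and kills the logarithmic divergence. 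The $s\in(0,t/2]$ piece, where $\varphi(s,\cdot)$ is singular, is handled directly with \eqref{condition1:f(t,x)} and $(t-s)^{-1}\asymp t^{-1}$. Carrying out these two elementary one-dimensional integrals produces a bound of order $(1+\ln(t^{-1}))\,t^{-1}$, which is \eqref{esti: prop 2, AV}.

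The main obstacle is this last point: the naive estimate $|\mathcal{A}q(t-s,\cdot,z)(x)|\lesssim (t-s)^{-1}$ is not integrable in $s$ near $t$, so the proof genuinely requires the cancellation trick $\int \mathcal{A}q\,\mathrm dz=0$ combined with the Hölder-in-space bound \eqref{condition2:f(t,x)} on $\varphi$; getting the bookkeeping of the exponents $\hat\theta,\gamma$ right so that both the $s\to 0$ and $s\to t$ endpoints converge, and verifying that the resulting constant depends only on the listed parameters, is where the care is needed. The logarithmic factor, which would be absent in the symmetric case of \cite{chen2015heat}, is the price paid for Lemma \ref{lem:Assume ln}; it is harmless because $\ln(t^{-1})$ is integrable near $0$, which is all that is used downstream.
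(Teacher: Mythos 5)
Your outline correctly identifies the central obstruction — the naive bound $|\mathcal{A}q(t-s,\cdot,z)(x)|\lesssim (1+\ln((t-s)^{-1}))\varrho_0^0(t-s,x-z)$ gives $(t-s)^{-1}$ after integrating in $z$, which is not integrable near $s=t$, so a cancellation is required. You also correctly see that the remedy is to subtract $\varphi(s,x)$ and exploit the Hölder modulus (\ref{condition2:f(t,x)}). However, the cancellation you invoke, $\int_{\Rd}\mathcal{A}q(t-s,\cdot,z)(x)\,\mathrm{d}z=0$, is \emph{false} in this setting, and patching it requires an ingredient your sketch never touches.

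To see why the identity fails, recall $q(t,x,z)=f^z_t(z-x)$: the coefficient is frozen at the \emph{target} point $z$, which also appears in the argument. Changing variables $w=z-x$ gives $\int_{\Rd}q(t,x,z)\,\mathrm dz=\int_{\Rd}f^{x+w}_t(w)\,\mathrm dw$, which equals $1$ only if $n(y,h)$ does not depend on $y$. In general one only has $\int q(t,x,\cdot)\,\mathrm dz=1+O(t^{\hat\theta/\alpha})$, and correspondingly $\int_{\Rd}\mathcal{A}q(t-s,\cdot,z)(x)\,\mathrm dz$ is of size $O\big((1+\ln((t-s)^{-1}))(t-s)^{\hat\theta/\alpha-1}\big)$ rather than zero. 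Consequently, after your add-and-subtract the residual term $\varphi(s,x)\int_{\Rd}\mathcal{A}q(t-s,\cdot,z)(x)\,\mathrm dz$ does not drop out; it must be estimated, and that estimate requires the Hölder-in-the-freezing-parameter bound (\ref{esti:f_t_integral_differ_of_y_y'}) from Lemma \ref{Lemma 12}, which your proposal never invokes. The paper handles this by the three-term decomposition (\ref{eq:J(t,s,x) with y}): it subtracts a free constant $\varphi(s,y)$ \emph{and} the comparison kernel $f^y_{t-s}(z-x)$, so that only the true constant $\varphi(s,y)$ is annihilated by $\mathcal{A}$, while the middle term $\varphi(s,y)\int(q-f^y)\,\mathrm dz$ is controlled by (\ref{esti:f_t_integral_differ_of_y_y'}); both of the two surviving pieces contribute at the same order $s^{-1+\gamma/\alpha}(1+\ln((t-s)^{-1}))(t-s)^{(\hat\theta-\gamma-\alpha)/\alpha}$ after setting $y=x$, and the one you dropped is not subordinate to the one you kept. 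So there is a genuine gap: either prove the cancellation you claim (you cannot, as above), or replace it with the paper's two-fold subtraction using Lemma \ref{Lemma 12}. A minor additional remark: your sketch attributes the $\varphi$-singularity to $s=t$ rather than $s=0$, and it does not address Step 1 of the paper (establishing $\nabla_x V=\int_0^t\nabla_x J\,\mathrm ds$ for $\alpha\ge1$), which is needed before $\mathcal{A}V$ even makes sense.
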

\begin{proof}
Let $0<s<t\le1$ and $x\in\Rd$ be arbitrary. By (\ref{upperbound:f_t})
and (\ref{condition1:f(t,x)}), we have
\begin{equation}
\int_{\Rd}\lvert q(t-s,x,z)\varphi(s,z)\rvert\mathrm{d}z\le c_{1}\int_{\Rd}\varrho_{\alpha}^{0}(t-s,x-z)s^{-1+\hat{\theta}/\alpha}\mathrm{d}z\overset{(\ref{esti1:rho})}{\le}c_{2}s^{-1+\hat{\theta}/\alpha}.\label{eq 1: conti. of V}
\end{equation}
So the function $V$ in (\ref{defi:V}) is well-defined. Let
\begin{equation}
J(t,s,x):=\int_{\Rd}q(t-s,x,z)\varphi(s,z)\mathrm{d}z.\label{defi:J(t,s,x)}
\end{equation}
By (\ref{upperbound:gradientf_t}), (\ref{esti:f_t,integral_differ})
and (\ref{esti:f_t,integral_differ, a=00003D1}), we obtain that for
$|x-x_{0}|\le(t-s)^{1/\alpha}$,
\begin{align*}
\lvert\nabla_{x}q(t-s,x,z)\rvert & \le c_{3}\varrho_{\alpha-1}^{0}(t-s,x-z)\overset{(\ref{ineq 1: chen})}{\le}c_{4}\varrho_{\alpha-1}^{0}(t-s,x_{0}-z).
\end{align*}
So it is easy to see that for $0<s<t\le1$ and $x\in\Rd$,
\begin{equation}
\nabla_{x}J(t,s,x)=\int_{\Rd}\nabla_{x}q(t-s,x,z)\varphi(s,z)\mathrm{d}z.\label{eq: differentiability of J in x}
\end{equation}
Similarly, we have
\begin{equation}
\left|\mathcal{A}q(t-s,\cdot,z)(x)\right|\le c_{5}\left(1+\ln\left(\left(t-s\right)^{-1}\right)\right)\varrho_{0}^{0}(t-s,x-z)\label{esti: Aq}
\end{equation}
and
\begin{equation}
\mathcal{A}J(t,s,\cdot)(x)=\int_{\Rd}\mathcal{A}q(t-s,\cdot,z)(x)\varphi(s,z)\mathrm{d}z.\label{eq: AJ=00003Dint Aq ds}
\end{equation}
Let $y\in\Rd$ be arbitrary. We now write
\begin{align}
J(t,s,x) & =\int_{\Rd}q(t-s,x,z)\left(\varphi(s,z)-\varphi(s,y)\right)\mathrm{d}z\nonumber \\
 & \qquad+\varphi(s,y)\int_{\Rd}\left(q(t-s,x,z)-f_{t-s}^{y}(z-x)\right)\mathrm{d}z+\varphi(s,y).\label{eq:J(t,s,x) with y}
\end{align}

We will complete the proof in two steps.

``$Step$ 1'': We show that if $\alpha\ge1$, then
\begin{equation}
\nabla_{x}V(t,x)=\int_{0}^{t}\nabla_{x}J(t,s,x)ds,\quad(t,x)\in(0,1]\times\Rd.\label{eq: grad V}
\end{equation}
By (\ref{defi: q(t,x,y)}), (\ref{condition1:f(t,x)}), (\ref{condition2:f(t,x)})
and (\ref{eq:J(t,s,x) with y}), we have
\begin{align}
\left|\nabla_{x}J(t,s,x)\right| & \le\left|\int_{\Rd}\nabla_{x}\left(f_{t-s}^{z}(z-x)\right)\left(\varphi(s,z)-\varphi(s,y)\right)\mathrm{d}z\right|\nonumber \\
 & \quad+|\varphi(s,y)|\cdot\left|\int_{\Rd}\left(\nabla_{x}\left(f_{t-s}^{z}(z-x)\right)-\nabla_{x}\left(f_{t-s}^{y}(z-x)\right)\right)\hbox{d}z\right|\nonumber \\
 & \overset{(\ref{upperbound:gradientf_t}),(\ref{esti:gradient_f_t_differof_y_y'})}{{\le}}c_{6}\int_{\Rd}s^{-1+\gamma/\alpha}\left(|y-z|^{\hat{\theta}-\gamma}\wedge1\right)\varrho_{\alpha-1}^{0}(t-s,x-z)\hbox{d}z\nonumber \\
 & \quad+c_{7}s^{-1+\hat{\theta}/\alpha}\int_{\Rd}\left(|y-z|^{\hat{\theta}}\wedge1\right)\left(\varrho_{\alpha-1}^{0}+\varrho_{\alpha-\gamma-1}^{\gamma}\right)(t-s,x-z)\hbox{d}z,\label{esti1:grad J for a>1}
\end{align}
where the constants $c_{5}$ and $c_{6}$ are independent of $y$.
Choosing $y=x$ in (\ref{esti1:grad J for a>1}), we get
\begin{align}
\left|\nabla_{x}J(t,s,x)\right| & \overset{(\ref{esti1:rho})}{\le}c_{8}s^{-1+\gamma/\alpha}(t-s)^{(\hat{\theta}-\gamma-1)/\alpha}+c_{9}s^{-1+\hat{\theta}/\alpha}(t-s)^{(\hat{\theta}-1)/\alpha}.\label{esti2:grad J for a>1}
\end{align}
If $\alpha\ge1$, then the right-hand side of (\ref{esti2:grad J for a>1}),
as a function with the variable $s$, is integrable on $[0,t]$. The
equation (\ref{eq: grad V}) now follows by the dominated convergence
theorem.

``$Step$ 2'': We consider a general $\alpha\in(0,2)$ and show
that $\mathcal{A}V(t,\cdot)(x)$ is well-defined and (\ref{eq:AV})
holds. For $h\in\Rd$ and $h\neq0$, it follows from (\ref{eq: grad V})
that
\begin{align}
 & V(t,x+h)-V(t,x)-\chi_{\alpha}(h)h\cdot\nabla_{x}V(t,x)\nonumber \\
 & \quad=\int_{0}^{t}\left[J(t,s,x+h)-J(t,s,x)-\chi_{\alpha}(h)h\cdot\nabla_{x}J(t,s,x)\right]\mathrm{d}s.\label{esti1: delta}
\end{align}
 By (\ref{defi: q(t,x,y)}), (\ref{condition1:f(t,x)}), (\ref{condition2:f(t,x)})
and (\ref{eq:J(t,s,x) with y}), we get
\begin{align}
 & \left|J(t,s,x+h)-J(t,s,x)-\chi_{\alpha}(h)h\cdot\nabla_{x}J(t,s,x)\right|\nonumber \\
 & \quad\le c_{10}\int_{\Rd}|f_{t-s}^{z}(z-x-h)-f_{t-s}^{z}(z-x)-\chi_{\alpha}(h)h\cdot\nabla_{x}\left(f_{t-s}^{z}(z-x)\right)|\,s^{-1+\gamma/\alpha}\nonumber \\
 & \qquad\times\left(|y-z|^{\hat{\theta}-\gamma}\wedge1\right)\hbox{d}z+c_{11}s^{-1+\hat{\theta}/\alpha}\int_{\Rd}|f_{t-s}^{z}(z-x-h)-f_{t-s}^{y}(z-x-h)\nonumber \\
 & \qquad\quad-f_{t-s}^{z}(z-x)+f_{t-s}^{y}(z-x)-\chi_{\alpha}(h)h\cdot\nabla_{x}\left(f_{t-s}^{z}(z-x)\right)\nonumber \\
 & \qquad\qquad+\chi_{\alpha}(h)h\cdot\nabla_{x}\left(f_{t-s}^{y}(z-x)\right)|\mathrm{d}z.\label{eq: prop 2, I}
\end{align}
It follows from (\ref{esti:f_t,integral_differ}), (\ref{esti:f_t,integral_differ, a=00003D1}),
(\ref{esti:f_t_integral_differ_of_y_y'}), (\ref{eq: prop 2, I})
and the Fubini's theorem that
\begin{align}
 & I(t,s,x):=\int_{\Rd\setminus\left\{ 0\right\} }\left|J(t,s,x+h)-J(t,s,x)-\chi_{\alpha}(h)h\cdot\nabla_{x}J(t,s,x)\right|\cdot\frac{n(x,h)}{|h|^{d+\alpha}}\hbox{d}h\nonumber \\
 & \le c_{12}\left(1+\ln\left[(t-s)^{-1}\right]\right)\int_{\Rd}\varrho_{0}^{0}(t-s,x-z)s^{-1+\gamma/\alpha}\left(|y-z|^{\hat{\theta}-\gamma}\wedge1\right)\hbox{d}z\nonumber \\
 & \ +c_{13}s^{-1+\hat{\theta}/\alpha}\left(1+\ln\left[(t-s)^{-1}\right]\right)\int_{\Rd}\left(|y-z|^{\hat{\theta}}\wedge1\right)\left(\varrho_{0}^{0}+\varrho_{-\gamma}^{\gamma}\right)(t-s,x-z)\hbox{d}z.\label{esti1: AJ(t,s,x)}
\end{align}
Choosing $y=x$ in (\ref{esti1: AJ(t,s,x)}) and applying (\ref{esti1:rho}),
we get
\begin{align}
 & \quad I(t,s,x)\le c_{14}\left(1+\ln\left[(t-s)^{-1}\right]\right)s^{-1+\gamma/\alpha}(t-s)^{(\hat{\theta}-\gamma-\alpha)/\alpha}\nonumber \\
 & \qquad+c_{15}\left(1+\ln\left[(t-s)^{-1}\right]\right)s^{-1+\hat{\theta}/\alpha}(t-s)^{(\hat{\theta}-\alpha)/\alpha}\nonumber \\
 & \quad\overset{\gamma\in(0,\hat{\theta})}{\le}c_{16}\left(1+\ln\left[(t-s)^{-1}\right]\right)s^{-1+\gamma/\alpha}(t-s)^{(\hat{\theta}-\gamma-\alpha)/\alpha},\label{esti2: AJ(t,s,x)}
\end{align}
which implies
\begin{align}
 & \quad\int_{0}^{t}I(t,s,x)\mathrm{d}s\le c_{16}\int_{0}^{t}\left(1+\ln\left[(t-s)^{-1}\right]\right)s^{-1+\gamma/\alpha}(t-s)^{(\hat{\theta}-\gamma-\alpha)/\alpha}\mathrm{d}s\nonumber \\
 & \quad\le c_{16}\int_{0}^{t/2}\left(1+\ln\left[\left(\frac{t}{2}\right)^{-1}\right]\right)s^{-1+\gamma/\alpha}\left(\frac{t}{2}\right)^{(\hat{\theta}-\gamma-\alpha)/\alpha}\mathrm{d}s\nonumber \\
 & \qquad+c_{16}\int_{t/2}^{t}\left(1+\ln\left[(t-s)^{-1}\right]\right)\left(\frac{t}{2}\right)^{-1+\gamma/\alpha}(t-s)^{(\hat{\theta}-\gamma-\alpha)/\alpha}\mathrm{d}s\nonumber \\
 & \quad\le c_{17}\left(1+\ln\left(t^{-1}\right)\right)t^{(\hat{\theta}-\alpha)/\alpha}+c_{17}t^{-1+\gamma/\alpha}t^{(\hat{\theta}-\gamma)/\left(2\alpha\right)}\nonumber \\
 & \quad\le c_{18}\left(1+\ln\left(t^{-1}\right)\right)t^{-1}.\label{bound for AV on =00005Be,1=00005D}
\end{align}
 So $\mathcal{A}V(t,\cdot)(x)$ is well-defined and (\ref{esti: prop 2, AV})
is true. By (\ref{eq: AJ=00003Dint Aq ds}), (\ref{esti1: delta})
and the Fubini's theorem, we obtain
\[
\mathcal{A}V(t,\cdot)(x)=\int_{0}^{t}\mathcal{A}J(t,s,\cdot)(x)\mathrm{d}s=\int_{0}^{t}\int_{\Rd}\mathcal{A}q(t-s,\cdot,z)(x)\varphi(s,z)\mathrm{d}z\mathrm{d}s.
\]

This completes the proof.
\end{proof}
\begin{prop}
Let $\varphi$ and $V$ be as in Proposition \ref{prop1:layerpotential}.
Then for all $t\in(0,1]$ and $x\in\Rd$, $\partial_{t}V(t,x)$ exists
and satisfies
\begin{equation}
\partial_{t}V(t,x)=\varphi(t,x)+\int_{0}^{t}\int_{\Rd}\mathcal{A}^{z}q(t-s,\cdot,z)(x)\varphi(s,z)\mathrm{d}z\mathrm{d}s.\label{eq:partqF^n}
\end{equation}
Moreover, for each $x\in\Rd$, $t\mapsto\partial_{t}V(t,x)$ is continuous
on $(0,1]$.\label{prop2:layerpotential}
\end{prop}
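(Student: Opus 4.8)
The plan is to follow the classical layer‑potential computation of \cite[Chap.~1, Theorems~4--5]{MR0181836}. The basic ingredient is the forward equation for the frozen kernel: for each fixed $z$ the map $\tau\mapsto f_\tau^z$ solves $\partial_\tau f_\tau^z=\mathcal{A}^zf_\tau^z$, and since $\lvert e^{-\tau\psi^z(u)}\rvert\le e^{-c\tau\lvert u\rvert^\alpha}$ by (\ref{MPAFLsect31-1}) one may differentiate the Fourier representation of $f_\tau^z$ under the integral sign, obtaining
\[
\partial_\tau q(\tau,x,z)=\mathcal{A}^zq(\tau,\cdot,z)(x)=\mathcal{A}q(\tau,\cdot,z)(x)-F(\tau,x,z),\qquad\tau>0.
\]
Using that $\lvert\mathcal{A}^zq(\tau,\cdot,z)(x)\rvert\le c(1+\mathbf{1}_{\alpha=1}\ln(\tau^{-1}))\varrho_0^0(\tau,x-z)$ together with (\ref{condition1:f(t,x)}) and (\ref{esti1:rho}), one checks that for fixed $s\in(0,1)$ the map $t\mapsto J(t,s,x)$ (see (\ref{defi:J(t,s,x)})) is $C^1$ on $(s,1]$ and, differentiating under the $\mathrm{d}z$‑integral, $\partial_tJ(t,s,x)=\int_{\Rd}\mathcal{A}^zq(t-s,\cdot,z)(x)\varphi(s,z)\,\mathrm{d}z$, which by the proof of Proposition~\ref{prop1:layerpotential} equals $\mathcal{A}J(t,s,\cdot)(x)-\int_{\Rd}F(t-s,x,z)\varphi(s,z)\,\mathrm{d}z$.

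The next step is to show that
\[
G(t,x):=\int_0^t\partial_tJ(t,s,x)\,\mathrm{d}s=\int_0^t\!\!\int_{\Rd}\mathcal{A}^zq(t-s,\cdot,z)(x)\varphi(s,z)\,\mathrm{d}z\,\mathrm{d}s
\]
is well defined and continuous in $t$ on $(0,1]$. For the $\mathcal{A}J$‑part I invoke the estimate (\ref{esti2: AJ(t,s,x)}), $\lvert\mathcal{A}J(t,s,\cdot)(x)\rvert\le c(1+\ln((t-s)^{-1}))s^{-1+\gamma/\alpha}(t-s)^{(\hat{\theta}-\gamma)/\alpha-1}$. For the $F$‑part I first record, from Lemma~\ref{lem:frac esti f_t}, Lemma~\ref{lem:Assume ln} and Remark~\ref{rem: f_t^y} (applied to $f^z$, using $\lvert n(x,h)-n(z,h)\rvert\le\kappa_2\lvert x-z\rvert^\theta$), that $\lvert F(\tau,x,z)\rvert\le c(1+\mathbf{1}_{\alpha=1}\ln(\tau^{-1}))(\lvert x-z\rvert^\theta\wedge1)\varrho_0^0(\tau,x-z)$, whence by (\ref{esti1:rho}) and (\ref{condition1:f(t,x)}), $\bigl\lvert\int_{\Rd}F(t-s,x,z)\varphi(s,z)\,\mathrm{d}z\bigr\rvert\le c(1+\ln((t-s)^{-1}))s^{-1+\hat{\theta}/\alpha}(t-s)^{\hat{\theta}/\alpha-1}$. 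Both bounds are integrable in $s$ over $(0,t)$ (at $s=0$ because the exponents $-1+\gamma/\alpha,-1+\hat{\theta}/\alpha$ exceed $-1$; at $s=t$ because the positive power of $t-s$ absorbs the logarithm), so $G(t,x)=\mathcal{A}V(t,x)-\int_0^t\int_{\Rd}F(t-s,x,z)\varphi(s,z)\,\mathrm{d}z\,\mathrm{d}s$ is well defined by Proposition~\ref{prop1:layerpotential}, and dominated convergence with these (locally in $t$ uniform) majorants gives the continuity of $t\mapsto G(t,x)$.

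Now I would compute the one‑sided derivatives from the difference quotient. Fix $0<t<t'\le1$ and write
\[
V(t',x)-V(t,x)=\int_t^{t'}J(t',s,x)\,\mathrm{d}s+\int_0^t\bigl(J(t',s,x)-J(t,s,x)\bigr)\,\mathrm{d}s.
\]
For the first integral, I split $J(t',s,x)=\int_{\Rd}f_{t'-s}^z(z-x)(\varphi(s,z)-\varphi(s,x))\,\mathrm{d}z+\varphi(s,x)\int_{\Rd}f_{t'-s}^z(z-x)\,\mathrm{d}z$; Lemma~\ref{Lemma 12} (estimate (\ref{esti:f_t_differof_y_y'})) together with (\ref{esti1:rho}) yields $\int_{\Rd}f_\tau^z(z-x)\,\mathrm{d}z=1+O(\tau^{\hat{\theta}/\alpha})$, while (\ref{condition2:f(t,x)}), the upper bound on $f_\tau^z$ and (\ref{esti1:rho}) bound the first piece by $c\,s^{-1+\gamma/\alpha}(t'-s)^{(\hat{\theta}-\gamma)/\alpha}$; hence $J(t',s,x)=\varphi(s,x)+O((t'-t)^{(\hat{\theta}-\gamma)/\alpha})$ uniformly for $s\in[t,t']$, so $\frac1{t'-t}\int_t^{t'}J(t',s,x)\,\mathrm{d}s\to\varphi(t,x)$ by continuity of $s\mapsto\varphi(s,x)$. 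For the second integral I use $J(t',s,x)-J(t,s,x)=\int_t^{t'}\partial_uJ(u,s,x)\,\mathrm{d}u$ and then Fubini (legitimate since for $s<t<u$ the quantity $u-s$ stays bounded away from $0$ inside the $s$‑integral, leaving only an integrable $(u-t)^{(\hat{\theta}-\gamma)/\alpha-1}$‑type singularity in $u$) to obtain $\int_0^t(J(t',s,x)-J(t,s,x))\,\mathrm{d}s=\int_t^{t'}\bigl(\int_0^t\partial_uJ(u,s,x)\,\mathrm{d}s\bigr)\mathrm{d}u$. Since $\int_0^t\partial_uJ(u,s,x)\,\mathrm{d}s=\mathcal{A}V(u,x)-\int_t^u\mathcal{A}J(u,s,\cdot)(x)\,\mathrm{d}s-\int_0^t\int_{\Rd}F(u-s,x,z)\varphi(s,z)\,\mathrm{d}z\,\mathrm{d}s$ and the two correction integrals over $(t,u)$ vanish as $u\downarrow t$ (again by the integrability of the majorants above), dividing by $t'-t$ and letting $t'\downarrow t$ produces $G(t,x)$; the symmetric computation with $t'\uparrow t$ gives the same value for the left derivative. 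Thus $\partial_tV(t,x)=\varphi(t,x)+G(t,x)$, and continuity of $t\mapsto\partial_tV(t,x)$ on $(0,1]$ follows from that of $\varphi$ (hypothesis) and of $G$.

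The main obstacle is the diagonal singularity as $t-s\to0$: the pointwise derivative $\partial_\tau q(\tau,x,z)=\mathcal{A}^zq(\tau,\cdot,z)(x)$ has size only $(1+\mathbf{1}_{\alpha=1}\ln(\tau^{-1}))\varrho_0^0(\tau,x-z)$, whose $z$‑integral is of order $\tau^{-1}$ and is \emph{not} integrable in $s$ up to $s=t$; the convergence of every $\mathrm{d}s$‑integral near $s=t$ therefore relies entirely on the cancellation already built into Proposition~\ref{prop1:layerpotential} (through the $\varphi(s,z)-\varphi(s,x)$ splitting of (\ref{eq:J(t,s,x) with y}) and the estimate $\int_{\Rd}(q(\tau,x,z)-f_\tau^x(z-x))\,\mathrm{d}z=O(\tau^{\hat{\theta}/\alpha})$) combined with the Hölder regularity (\ref{condition2:f(t,x)}) of $\varphi$. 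The remaining delicate point is the order‑of‑integration bookkeeping (Fubini and the change of variables $(s,\tau)\mapsto(s,u=s+\tau)$) needed to identify the limit of the difference quotient with $G(t,x)$.
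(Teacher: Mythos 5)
Your proof is correct and follows essentially the same strategy as the paper: split the difference quotient $\frac{V(t',x)-V(t,x)}{t'-t}$ into a boundary piece over $[t,t']$ (shown to converge to $\varphi(t,x)$ via the decomposition (\ref{eq:J(t,s,x) with y}) and Lemma \ref{Lemma 12}) and an interior piece over $[0,t]$ (controlled by the singular-but-integrable bound (\ref{esti: partial t J}) on $\partial_t J$), then verify continuity of the resulting derivative. The only execution-level differences are that you pass the derivative through the interior integral by Fubini and the fundamental theorem of calculus rather than the paper's mean value theorem plus dominated convergence, and you compare $J(t',s,x)$ to $\varphi(s,x)$ rather than directly to $\varphi(t,x)$ in the boundary term; both routes rely on the same estimates and cancellation and are interchangeable.
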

\begin{proof}
Let $J$ be the same as in (\ref{defi:J(t,s,x)}). It is easy to verify
that $\partial_{t}J(t,s,x)$ exists for $0<s<t\le1$ and $x\in\Rd$.

Let $x\in\Rd$ be fixed. We only consider the case with $0<t<1$,
$h>0$ and $t+h\le1$, since the argument we will use works similarly
when $0<t-h<t\le1$. We have
\begin{align}
 & h^{-1}\left(V(t+h,x)-V(t,x)\right)\nonumber \\
 & \quad=h^{-1}\int_{0}^{t+h}J(t+h,s,x)\mathrm{d}s-h^{-1}\int_{0}^{t}J(t,s,x)\mathrm{d}s\nonumber \\
 & \quad=h^{-1}\int_{t}^{t+h}J(t+h,s,x)\mathrm{d}s+\int_{0}^{t}h^{-1}\left[J(t+h,s,x)-J(t,s,x)\right]\mathrm{d}s\nonumber \\
 & \quad=h^{-1}\int_{t}^{t+h}\left[J(t+h,s,x)-\varphi(t,x)\right]\mathrm{d}s+\varphi(t,x)+\int_{0}^{t}J_{1}(t^{*},s,x)\mathrm{d}s,\label{eq: prop 3, *}
\end{align}
where $J_{1}(t,s,x):=\partial_{t}J(t,s,x)$ and $t^{*}\in[t,t+h]$.

We will complete the proof in several steps.

``$Step$ 1'': We show that
\begin{equation}
\lim_{h\downarrow0}h^{-1}\int_{t}^{t+h}\left|J(t+h,s,x)-\varphi(t,x)\right|\mathrm{d}s=0.\label{esti0: J-phi}
\end{equation}
For $s\in(t,t+h)$, we have
\begin{align}
 & |J(t+h,s,x)-\varphi(t,x)|\nonumber \\
 & \quad=\Big\lvert\int_{\Rd}\left[q(t+h-s,x,z)-f_{t+h-s}^{x}(z-x)\right]\varphi(s,z)\mathrm{d}z\nonumber \\
 & \quad\qquad+\int_{\Rd}f_{t+h-s}^{x}(z-x)\left[\varphi(s,z)-\varphi(t,x)\right]\mathrm{d}z\Big\rvert\nonumber \\
 & \quad\overset{(\ref{defi: q(t,x,y)})}{\le}\int_{\Rd}\left|f_{t+h-s}^{z}(z-x)-f_{t+h-s}^{x}(z-x)\right|\cdot|\varphi(s,z)\mathrm{|d}z\nonumber \\
 & \quad\qquad+\int_{\Rd}f_{t+h-s}^{x}(z-x)\cdot|\varphi(s,z)-\varphi(t,x)|\mathrm{d}z\nonumber \\
 & \quad=:I_{1}+I_{2}.\label{esti1: J-phi}
\end{align}
For $I_{1}$, by (\ref{esti:f_t_differof_y_y'}), (\ref{condition1:f(t,x)})
and noting that $s\in(t,t+h)$, we have
\begin{align}
 & I_{1}\le c_{1}s^{-1+\hat{\theta}/\alpha}\int_{\Rd}\left(|z-x|^{\hat{\theta}}\wedge1\right)\left(\varrho_{\alpha}^{0}+\varrho_{\alpha-\gamma}^{\gamma}\right)(t+h-s,z-x)\mathrm{d}z\nonumber \\
 & \quad\overset{(\ref{esti1:rho})}{\le}c_{2}t^{-1+\hat{\theta}/\alpha}\left(t+h-s\right)^{\hat{\theta}/\alpha}\le c_{2}t^{-1+\hat{\theta}/\alpha}h^{\hat{\theta}/\alpha}.\label{esti11: J-phi}
\end{align}
For $I_{2}$ and $n\in\mathbb{N}$, by (\ref{upperbound:f_t}), (\ref{condition1:f(t,x)})
and noting that $s\in(t,t+h)$, we have
\begin{align}
 & I_{2}\le c_{3}\int_{\left\{ |z-x|\ge1/n\right\} }\varrho_{\alpha}^{0}(t+h-s,z-x)\cdot|\varphi(s,z)-\varphi(t,x)|\mathrm{d}z\nonumber \\
 & \quad\qquad+c_{3}\int_{\left\{ |z-x|\le1/n\right\} }\varrho_{\alpha}^{0}(t+h-s,z-x)\cdot|\varphi(s,z)-\varphi(t,x)|\mathrm{d}z\nonumber \\
 & \quad\le c_{4}t^{-1+\hat{\theta}/\alpha}\int_{\left\{ |z-x|\ge1/n\right\} }\varrho_{\alpha}^{0}(t+h-s,z-x)\mathrm{d}z\nonumber \\
 & \quad\qquad+c_{3}\int_{\left\{ |z-x|\le1/n\right\} }\varrho_{\alpha}^{0}(t+h-s,z-x)\cdot|\varphi(s,z)-\varphi(t,x)|\mathrm{d}z.\label{esti2: J-phi}
\end{align}
For any given $\varepsilon>0$, by the continuity of $\varphi$, we
can find $n_{0}\in\mathbb{N}$ and $h_{0}>0$ such that
\begin{equation}
|\varphi(s,z)-\varphi(t,x)|<\varepsilon,\quad\forall s\in(t,t+h_{0}),\,|z-x|\le\frac{1}{n_{0}}.\label{esti3: J-phi}
\end{equation}
By (\ref{esti2: J-phi}) and (\ref{esti3: J-phi}), we get that for
$t<s<t+h<t+h_{0}$,
\begin{align}
I_{2} & \le c_{4}t^{-1+\hat{\theta}/\alpha}\int_{\left\{ |z-x|\ge1/n_{0}\right\} }\varrho_{\alpha}^{0}(t+h-s,z-x)\mathrm{d}z+c_{5}\varepsilon\nonumber \\
 & =c_{4}t^{-1+\hat{\theta}/\alpha}\int_{\left\{ |z|\ge1/n_{0}\right\} }\varrho_{\alpha}^{0}(t+h-s,z)\mathrm{d}z+c_{5}\varepsilon\nonumber \\
 & =c_{4}t^{-1+\hat{\theta}/\alpha}\int_{\left\{ |z'|\ge\left(t+h-s\right)^{-1/\alpha}/n_{0}\right\} }\varrho_{\alpha}^{0}\left(1,z'\right)\mathrm{d}z'+c_{5}\varepsilon\nonumber \\
 & \le c_{4}t^{-1+\hat{\theta}/\alpha}\int_{\left\{ |z'|\ge h^{-1/\alpha}/n_{0}\right\} }\varrho_{\alpha}^{0}\left(1,z'\right)\mathrm{d}z'+c_{5}\varepsilon.\label{esti4: J-phi}
\end{align}
Combining (\ref{esti1: J-phi}), (\ref{esti11: J-phi}), and (\ref{esti4: J-phi})
yields
\[
\lim_{h\downarrow0}h^{-1}\int_{t}^{t+h}\left|J(t+h,s,x)-\varphi(t,x)\right|\mathrm{d}s\le c_{5}\varepsilon.
\]
Since $\varepsilon>0$ is arbitrary, the convergence in (\ref{esti0: J-phi})
follows.

``$Step$ 2'': We evaluate the integral $\int_{0}^{t}\partial_{t}J(t^{*},s,x)\mathrm{d}s$.
If $t>s$, then
\begin{align}
\partial_{t}J(t,s,x) & =\int_{\Rd}\partial_{t}q(t-s,x,z)\varphi(s,z)\mathrm{d}z\nonumber \\
 & =\int_{\Rd}\mathcal{A}^{z}q(t-s,\cdot,z)(x)\varphi(s,z)\mathrm{d}z\label{eq: prop 3, **}\\
 & =\int_{\Rd}\left(\mathcal{A}^{z}-\mathcal{A}\right)q(t-s,\cdot,z)(x)\varphi(s,z)\mathrm{d}z\nonumber \\
 & \qquad+\int_{\Rd}\mathcal{A}q(t-s,\cdot,z)(x)\varphi(s,z)\mathrm{d}z\nonumber \\
 & =:I_{3}+I_{4}.\label{eq: defi, partial_t J}
\end{align}
For $III$, by (\ref{esti:f_t,integral_differ}), (\ref{esti:f_t,integral_differ, a=00003D1})
and (\ref{condition1:f(t,x)}), we have
\begin{align}
|I_{3}| & \le c_{6}s^{-1+\hat{\theta}/\alpha}\left(1+\ln\left[\left(t-s\right)^{-1}\right]\right)\int_{\Rd}\varrho_{0}^{\hat{\theta}}(t-s,z-x)\mathrm{d}z\nonumber \\
 & \overset{(\ref{esti1:rho})}{\le}c_{7}s^{-1+\hat{\theta}/\alpha}\left(1+\ln\left[\left(t-s\right)^{-1}\right]\right)(t-s)^{-1+\hat{\theta}/\alpha}.\label{esti: partial t J III}
\end{align}
The term $I_{4}$ has already been treated in Proposition \ref{prop1:layerpotential},
see (\ref{eq: AJ=00003Dint Aq ds}) and (\ref{esti2: AJ(t,s,x)}).
Altogether we obtain
\begin{equation}
|\partial_{t}J(t,s,x)|\le c_{8}s^{-1+\gamma/\alpha}\left(1+\ln\left[\left(t-s\right)^{-1}\right]\right)(t-s)^{-1+\left(\hat{\theta}-\gamma\right)/\alpha}.\label{esti: partial t J}
\end{equation}

Consider
\[
H:=\int_{0}^{t}J_{1}(t^{*},s,x)\mathrm{d}s-\int_{0}^{t}J_{1}(t,s,x)\mathrm{d}s.
\]
Note that for $0<s<t$ and $t^{*}\in[t,t+h]$, it holds that
\begin{equation}
|J_{1}(t^{*},s,x)-J_{1}(t,s,x)|\overset{(\ref{esti: partial t J})}{\le}2c_{8}s^{-1+\gamma/\alpha}\left(1+\ln\left(\left(t-s\right)^{-1}\right)\right)(t-s)^{-1+\left(\hat{\theta}-\gamma\right)/\alpha}.\label{esti: J_1(t*)-J_1(t)}
\end{equation}
Since for $s<t\le t^{*}\le t+h$, $\lim_{h\to0}J_{1}(t^{*},s,x)=J_{1}(t,s,x)$,
by (\ref{esti: J_1(t*)-J_1(t)}) and dominated convergence, we obtain
\[
\lim_{h\to0}\int_{0}^{t}|J_{1}(t^{*},s,x)\mathrm{d}s-J_{1}(t,s,x)|\mathrm{d}s=0.
\]
So we get $\lim_{h\to0}|H|=0$. By (\ref{eq: prop 3, *}), (\ref{esti0: J-phi})
and (\ref{eq: prop 3, **}), we obtain (\ref{eq:partqF^n}).

``$Step$ 3'': To see that the function $t\mapsto\partial_{t}V(t,x)$
is continuous, we can argue as above, namely, for $h\in(0,\delta)$,
\begin{align*}
 & \int_{0}^{t+h}\int_{\Rd}\mathcal{A}^{z}q(t+h-s,\cdot,z)(x)\varphi(s,z)\mathrm{d}z\mathrm{d}s\\
 & \quad=\int_{0}^{t}\int_{\Rd}\mathcal{A}^{z}q(t+h-s,\cdot,z)(x)\varphi(s,z)\mathrm{d}z\mathrm{d}s\\
 & \qquad+\int_{t}^{t+h}\int_{\Rd}\mathcal{A}^{z}q(t+h-s,\cdot,z)(x)\varphi(s,z)\mathrm{d}z\mathrm{d}s,
\end{align*}
where the second term on the right-hand side goes to 0 as $h\to0$,
since by (\ref{esti: partial t J}),
\begin{align*}
 & \lim_{h\to0}\int_{t}^{t+h}|J_{1}(t+h,s,x)|\mathrm{d}s\\
 & \quad\le\lim_{h\to0}\int_{t}^{t+h}s^{-1+\gamma/\alpha}\left(1+\ln\left(\left(t+h-s\right)^{-1}\right)\right)(t+h-s)^{-1+(\hat{\theta}-\gamma)/\alpha}\mathrm{d}s=0,
\end{align*}
while the first term converges to $\int_{0}^{t}\int_{\Rd}\mathcal{A}^{z}q(t-s,\cdot,z)(x)\varphi(s,z)\mathrm{d}z\mathrm{d}s$
by (\ref{eq: prop 3, **}), (\ref{esti: partial t J}) and dominated
convergence.
\end{proof}
\begin{cor}
\label{cor: contin. of V}Let $\varphi$ and $V$ be as in Proposition
\ref{prop1:layerpotential}. Then the function $(t,x)\mapsto V(t,x)$
is bounded continuous on $(0,1]\times\Rd$.
\end{cor}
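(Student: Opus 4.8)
The plan is to establish three things: (a) $V$ is bounded on $(0,1]\times\Rd$; (b) for each fixed $x$, the map $t\mapsto V(t,x)$ is continuous on $(0,1]$; and (c) the family $\{V(t,\cdot):t\in(0,1]\}$ is equicontinuous on $\Rd$. Joint continuity then follows from (b) and (c) by a triangle inequality: at any $(t_0,x_0)$ one writes $|V(t,x)-V(t_0,x_0)|\le|V(t,x)-V(t,x_0)|+|V(t,x_0)-V(t_0,x_0)|$, where the first term is small uniformly in $t$ by (c) and the second is small by (b).

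Assertion (a) is immediate from (\ref{eq 1: conti. of V}): $|V(t,x)|\le c\int_0^t s^{-1+\hat\theta/\alpha}\,\mathrm{d}s\le c't^{\hat\theta/\alpha}\le c'$ for $t\in(0,1]$; in fact this also shows $V(t,x)\to0$ as $t\downarrow0$, uniformly in $x$. Assertion (b) requires no new work: Proposition \ref{prop2:layerpotential} already states that $\partial_tV(t,x)$ exists for every $t\in(0,1]$, so $V(\cdot,x)$ is in particular continuous on $(0,1]$.

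The core of the argument is (c). Writing $V(t,x)-V(t,x')=\int_0^t\big(J(t,s,x)-J(t,s,x')\big)\,\mathrm{d}s$ with $J$ as in (\ref{defi:J(t,s,x)}), I would fix $\delta\in(0,1)$ and split the $s$-integral at $s=t-\delta$ (the lower piece being empty when $t\le\delta$). On $s\in[(t-\delta)\vee0,\,t]$ one bounds $|J(t,s,x)-J(t,s,x')|\le|J(t,s,x)|+|J(t,s,x')|\le2cs^{-1+\hat\theta/\alpha}$ by (\ref{eq 1: conti. of V}); since $0<\hat\theta/\alpha<1$ and $u\mapsto u^{\hat\theta/\alpha}$ is subadditive, $\int_{(t-\delta)\vee0}^{t}s^{-1+\hat\theta/\alpha}\,\mathrm{d}s\le c\delta^{\hat\theta/\alpha}$ uniformly in $t\in(0,1]$. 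On $s\in[0,t-\delta]$ one has $t-s\ge\delta$; writing $q(t-s,x,z)=f_{t-s}^{z}(z-x)$ and invoking the estimate (\ref{Coro 1, eq}) (valid with $f_{t-s}^{z}$ in place of $f_{t-s}$ by Remark \ref{rem: f_t^y}) together with the bound (\ref{condition1:f(t,x)}) on $\varphi$ and the convolution estimate (\ref{esti1:rho}), one obtains $|J(t,s,x)-J(t,s,x')|\le cs^{-1+\hat\theta/\alpha}\big(((t-s)^{-1/\alpha}|x-x'|)\wedge1\big)\le cs^{-1+\hat\theta/\alpha}\delta^{-1/\alpha}|x-x'|$, hence $\int_0^{t-\delta}|J(t,s,x)-J(t,s,x')|\,\mathrm{d}s\le c\delta^{-1/\alpha}|x-x'|$. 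Adding the two pieces gives $|V(t,x)-V(t,x')|\le c\big(\delta^{\hat\theta/\alpha}+\delta^{-1/\alpha}|x-x'|\big)$ for all $t\in(0,1]$, $x,x'\in\Rd$ and $\delta\in(0,1)$; fixing $\delta$ small and then sending $|x-x'|\to0$ (or optimizing over $\delta$, which yields the Hölder bound $|V(t,x)-V(t,x')|\le C(|x-x'|^{\hat\theta/(\hat\theta+1)}\wedge1)$) proves (c).

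The only delicate point is the interplay between the singular weight $s^{-1+\hat\theta/\alpha}$ of the integrand near $s=0$ and the factor $(t-s)^{-1/\alpha}$, which blows up near $s=t$; separating these two effects is precisely the role of the cut at $s=t-\delta$, and it is also what makes the estimate in (c) hold uniformly in $t$ (indeed all the way down to $t=0$). Everything else is routine.
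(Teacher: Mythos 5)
Your proof is correct, and it takes a genuinely different route from the paper's. Both arguments use the same triangle-inequality skeleton, but split the increment in opposite ways. The paper fixes $(t_{0},x_{0})$, chooses $\varepsilon<t_{0}$, and shows via Proposition \ref{prop2:layerpotential} together with the estimate leading to (\ref{bound for AV on =00005Be,1=00005D}) that $\partial_{t}V$ is bounded on $[\varepsilon,1]\times\Rd$, hence $|V(t,x)-V(t_{0},x)|\le c|t-t_{0}|$ uniformly in $x$; it then pairs this with pointwise continuity of $x\mapsto V(t_{0},x)$, which follows from continuity of $J(t_{0},s,\cdot)$ and dominated convergence. You instead establish the ``dual'' uniformity: equicontinuity of the family $\{V(t,\cdot):t\in(0,1]\}$, uniformly in $t$, via the cut at $s=t-\delta$, the bound (\ref{eq 1: conti. of V}) near $s=t$, and (\ref{Coro 1, eq}) (valid for $f_{t-s}^{z}$ by Remark \ref{rem: f_t^y}) combined with (\ref{condition1:f(t,x)}) and (\ref{esti1:rho}) on $s\le t-\delta$. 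You then pair this with pointwise continuity of $t\mapsto V(t,x_{0})$, which you get for free from the differentiability in Proposition \ref{prop2:layerpotential}. Your route is self-contained at the level of first-order estimates on $f_{t}^{y}$ and does not need the $\mathcal{A}^{z}q$ bound; it also yields a stronger quantitative conclusion, namely a spatial H\"older modulus $|V(t,x)-V(t,x')|\le C\big(|x-x'|^{\hat\theta/(\hat\theta+1)}\wedge1\big)$ uniform over $t\in(0,1]$ (and extending to $t=0$, where $V\equiv0$). The paper's route is shorter given that Proposition \ref{prop2:layerpotential} has already been proved, but only gives a Lipschitz-in-$t$ modulus on compact subintervals $[\varepsilon,1]$, not up to $t=0$.
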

\begin{proof}
According to (\ref{eq 1: conti. of V}), the function $V$ is obviously
bounded on $(0,1]\times\Rd$. Let $(t_{0},x_{0})\in(0,1]\times\Rd$
be fixed. Choose $\varepsilon>0$ such that $\varepsilon<t_{0}$.
In view of (\ref{eq: prop 3, **}) and (\ref{esti: partial t J}),
we obtain for $s<t$ and $x\in\Rd$,
\begin{align*}
 & \left|\int_{\Rd}\mathcal{A}^{z}q(t-s,\cdot,z)(x)\varphi(s,z)\mathrm{d}z\right|\\
 & \quad\le c_{1}s^{-1+\gamma/\alpha}\left(1+\ln\left[\left(t-s\right)^{-1}\right]\right)(t-s)^{-1+\left(\hat{\theta}-\gamma\right)/\alpha}.
\end{align*}
Arguing as in (\ref{bound for AV on =00005Be,1=00005D}), we get
\[
\left|\int_{0}^{t}\int_{\Rd}\mathcal{A}^{z}q(t-s,\cdot,z)(x)\varphi(s,z)\mathrm{d}z\mathrm{d}s\right|\le c_{2}\left(1+\ln\left(t^{-1}\right)\right)t^{-1},\quad t\in(0,1],x\in\Rd.
\]
By (\ref{eq:partqF^n}), we see that $\partial_{t}V(t,x)$ is bounded
on $[\varepsilon,1]\times\Rd$. Therefore, for $(t,x)\in[\varepsilon,1]\times\Rd$,
\begin{align}
|V(t,x)-V(t_{0},x_{0})| & \le|V(t,x)-V(t_{0},x)|+|V(t_{0},x)-V(t_{0},x_{0})|\nonumber \\
 & \le c_{3}|t-t_{0}|+|V(t_{0},x)-V(t_{0},x_{0})|.\label{eq2: conti. of V}
\end{align}
By (\ref{eq: differentiability of J in x}), $J(t,s,x)$ is continuous
in $x$. Since $V(t,x)=\int_{0}^{t}J(t,s,x)\mathrm{d}s$, it follows
from (\ref{eq 1: conti. of V}) and dominated convergence that for
each $t\in(0,1]$, the function $x\mapsto V(t,x)$ is continuous.
In view of (\ref{eq2: conti. of V}), we get $\lim_{(t,x)\to(t_{0},x_{0})}V(t,x)=V(t_{0},x_{0})$.
\end{proof}
Next, we show that $p(t,x,y)$ defined in (\ref{defi: p(t,x,y)})
is the fundamental solution to the Cauchy problem of the equation
$\partial_{t}u=\mathcal{A}u.$
\begin{prop}
\label{Prop. : p is funda. solu.}Let $\phi\in C_{0}^{\infty}(\Rd)$.
Define $u(t,x):=\int_{\Rd}p(t,x,y)\phi(y)\mathrm{d}y$, $t\in(0,1]$,
and $u(0,x):=\phi(x)$, where $x\in\Rd$. Then $u\in C_{b}([0,1]\times\Rd)$
and
\begin{equation}
\partial_{t}u(t,x)=\mathcal{A}u(t,\cdot)(x),\quad t\in(0,1],\,x\in\Rd.\label{eq 0: pf fund. sol.}
\end{equation}
Moreover,\textcolor{red}{{} }\textcolor{black}{for each $x\in\Rd$,
$t\mapsto\partial_{t}u(t,x)$ is continuous on $(0,1]$; for each
$t\in(0,1]$, $x\mapsto\partial_{t}u(t,x)$ is continuous on $\Rd$.}
\end{prop}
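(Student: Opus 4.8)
The plan is to write $u=u_{0}+V$ as a leading parametrix term plus a layer potential, to recognize $V$ as an instance of the layer potentials treated in Propositions~\ref{prop1:layerpotential}--\ref{prop2:layerpotential}, and then to derive $\partial_{t}u=\mathcal{A}u$ by combining the two pieces through the renewal identity for $\Phi$. Concretely, set
\[
u_{0}(t,x):=\int_{\Rd}q(t,x,y)\phi(y)\,\mathrm{d}y,\qquad\varphi(s,z):=\int_{\Rd}\Phi(s,z,y)\phi(y)\,\mathrm{d}y,
\]
so that, by Fubini, $u(t,x)=u_{0}(t,x)+V(t,x)$ where $V$ is precisely the layer potential \eqref{defi:V} with data $\varphi$. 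The first task is to verify that $\varphi$ satisfies \eqref{condition1:f(t,x)}--\eqref{condition2:f(t,x)} for some $\gamma\in(0,\hat{\theta})$: both bounds follow at once from the two estimates for $\Phi$ in Lemma~\ref{lem:esti_F^n}, the compact support of $\phi$, and the integral inequality \eqref{esti1:rho}, while continuity of $\varphi$ comes from continuity of $\Phi$ and dominated convergence. Hence Propositions~\ref{prop1:layerpotential} and \ref{prop2:layerpotential} and Corollary~\ref{cor: contin. of V} apply to $V$: it is bounded and continuous on $(0,1]\times\Rd$ with $\sup_{x}|V(t,x)|\le c\,t^{\hat{\theta}/\alpha}$ by \eqref{eq 1: conti. of V}, $\mathcal{A}V(t,\cdot)$ is well defined and satisfies \eqref{eq:AV}, and $\partial_{t}V$ exists and satisfies \eqref{eq:partqF^n}.

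For $u_{0}$, boundedness is immediate since each $f_{t}^{y}$ is a probability density, so $|u_{0}(t,x)|\le\|\phi\|$, and continuity of $u_{0}$ on $(0,1]\times\Rd$ follows from the continuity of $q$ (Proposition~\ref{Prop. esti: p(t,x,y) }) together with dominated convergence. The delicate point is continuity up to $t=0$. Writing $y=x+z$ and using $\int_{\Rd}f_{t}^{x}(z)\,\mathrm{d}z=1$,
\[
u_{0}(t,x)-\phi(x)=\int_{\Rd}\big(f_{t}^{x+z}(z)-f_{t}^{x}(z)\big)\phi(x+z)\,\mathrm{d}z+\int_{\Rd}f_{t}^{x}(z)\big(\phi(x+z)-\phi(x)\big)\,\mathrm{d}z.
\]
I would bound the first integral by a quantity that is $o(1)$ uniformly in $x$ as $t\downarrow0$, using \eqref{esti:f_t_differof_y_y'} (with $y=x+z$, $y'=x$) and \eqref{esti1:rho}, absorbing the factor $|z|^{\theta}\wedge1$ into the exponent $\beta$ of the $\varrho$-kernels; and I would bound the second integral, for fixed $\delta>0$, by the modulus of continuity of $\phi$ on $\{|z|\le\delta\}$ plus $2\|\phi\|\,\mathbf{P}\big(\sup_{s\le t}|\tilde{Z}^{x}_{s}|\ge\delta\big)$, where $\tilde{Z}^{x}$ is the L\'evy process generated by $\mathcal{A}^{x}$; the latter probability is $\le c\,t\,\delta^{-\alpha}$ by the exit-time estimate of Lemma~\ref{lem: esti exit time of a ball}, whose constant depends only on $d,\alpha,\kappa_{0},\kappa_{1}$. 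Letting $t\downarrow0$ and then $\delta\downarrow0$ shows $\sup_{x}|u_{0}(t,x)-\phi(x)|\to0$, so $u_{0}\in C_{b}([0,1]\times\Rd)$ with $u_{0}(0,\cdot)=\phi$; combined with the corresponding property of $V$ this gives $u\in C_{b}([0,1]\times\Rd)$.

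For the equation, I would differentiate and apply $\mathcal{A}$ under the $y$-integral defining $u_{0}$ — legitimate because $|\partial_{t}q(t,x,y)|$, $|\nabla_{x}q(t,x,y)|$ and $|\mathcal{A}q(t,\cdot,y)(x)|$ are controlled by \eqref{upperbound:gradientf_t}, \eqref{esti: Aq} and the Fourier bound $|\partial_{t}q(t,x,y)|\le c(t)$, all integrable against $\phi$ — and then use the pointwise identities $\partial_{t}q(t,x,y)=\mathcal{A}^{y}q(t,\cdot,y)(x)$ and $\mathcal{A}q(t,\cdot,y)(x)=\mathcal{A}^{y}q(t,\cdot,y)(x)+F(t,x,y)$ to get
\[
\partial_{t}u_{0}(t,x)=\mathcal{A}u_{0}(t,\cdot)(x)-\int_{\Rd}F(t,x,y)\phi(y)\,\mathrm{d}y.
\]
For $V$, subtracting \eqref{eq:AV} from \eqref{eq:partqF^n} and using $\mathcal{A}q=\mathcal{A}^{z}q+F$ gives $\partial_{t}V(t,x)-\mathcal{A}V(t,\cdot)(x)=\varphi(t,x)-\int_{0}^{t}\int_{\Rd}F(t-s,x,z)\varphi(s,z)\,\mathrm{d}z\,\mathrm{d}s$; since $\varphi(s,z)=\int\Phi(s,z,y)\phi(y)\,\mathrm{d}y$ and $F\otimes\Phi=\Phi-F$ (immediate from $\Phi=\sum_{n\ge1}F^{\otimes n}$ and $F^{\otimes(n+1)}=F\otimes F^{\otimes n}$), the double integral equals $\varphi(t,x)-\int F(t,x,y)\phi(y)\,\mathrm{d}y$, whence $\partial_{t}V(t,x)=\mathcal{A}V(t,\cdot)(x)+\int F(t,x,y)\phi(y)\,\mathrm{d}y$. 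Adding the two identities and using linearity of $\mathcal{A}$ (and noting $u(t,\cdot)$ is $C^{1}$ for $\alpha\ge1$ by \eqref{eq: grad V}, so $\mathcal{A}u(t,\cdot)$ is meaningful) yields $\partial_{t}u(t,x)=\mathcal{A}u(t,\cdot)(x)$. Finally, $t\mapsto\partial_{t}u(t,x)$ is continuous by Proposition~\ref{prop2:layerpotential} for the $V$-part and dominated convergence for the $u_{0}$-part, and $x\mapsto\partial_{t}u(t,x)$ is continuous by the same dominated-convergence scheme as in Corollary~\ref{cor: contin. of V}, localizing with \eqref{ineq 1: chen} and using \eqref{esti: partial t J} and \eqref{upperbound:gradientf_t}.

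The main obstacle is the analysis of $u_{0}$ as $t\downarrow0$: because the frozen densities $f_{t}^{y}$ genuinely depend on $y$, $u_{0}(t,\cdot)$ is not merely a convolution of $\phi$ with an approximate identity, and it is here that Lemma~\ref{Lemma 12} (to compare $f_{t}^{x+z}$ with $f_{t}^{x}$) and the $x$-uniform exit-time estimate of Lemma~\ref{lem: esti exit time of a ball} do the real work. The remaining ingredients — the Fubini and differentiation-under-the-integral steps and the renewal identity $F\otimes\Phi=\Phi-F$ — are routine once the convolution estimates of Lemma~\ref{lem:esti_F^n} and Section~\ref{subsec:Some-inequalities-and} are in hand.
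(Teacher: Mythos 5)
Your decomposition $u=u_{0}+V$, the verification that $\varphi(s,z)=\int\Phi(s,z,y)\phi(y)\,\mathrm{d}y$ satisfies \eqref{condition1:f(t,x)}--\eqref{condition2:f(t,x)}, the invocation of Propositions \ref{prop1:layerpotential}--\ref{prop2:layerpotential} and Corollary \ref{cor: contin. of V}, and the renewal identity $F\otimes\Phi=\Phi-F$ reproduce the paper's argument essentially verbatim, with the algebraic grouping of $\partial_{t}u-\mathcal{A}u$ differing only cosmetically. Two small points: $|u_{0}(t,x)|\le\|\phi\|$ does not follow from ``$f_{t}^{y}$ is a probability density,'' since $y\mapsto f_{t}^{y}(y-x)$ need not integrate to $1$ (use \eqref{upperbound:f_t} together with \eqref{esti1:rho} instead), and for $\sup_{x}|u_{0}(t,x)-\phi(x)|\to0$ the paper's direct route via $|\phi(y)-\phi(x)|\le c\,(1\wedge|x-y|^{\alpha/2})$, \eqref{esti1:rho} and \eqref{esti:f_t_differof_y_y'} is simpler than your modulus-of-continuity/exit-time splitting, though both are valid.
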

\begin{proof}
Set
\[
I_{1}(t,x):=\int_{\Rd}q(t,x,y)\phi(y)\mathrm{d}y
\]
and
\begin{align*}
I_{2}(t,x) & :=\int_{0}^{t}\int_{\Rd}\int_{\Rd}q(t-s,x,z)\Phi(s,z,y)\phi(y)\mathrm{\mathrm{d}\mathit{y}d}z\mathrm{d}s\\
 & =\int_{0}^{t}\int_{\Rd}q(t-s,x,z)\varphi(s,z)\mathrm{d}z\mathrm{d}s,
\end{align*}
where $\varphi(s,z):=\int_{\Rd}\Phi(s,z,y)\phi(y)\mathrm{\mathrm{d}\mathit{y}}$.
Then $\varphi$ satisfies (\ref{condition1:f(t,x)}) and (\ref{condition2:f(t,x)}).

By Proposition \ref{prop1:layerpotential}, $\mathcal{A}I_{2}(t,\cdot)(x)$
is well-defined for all $t\in(0,1]$ and $x\in\Rd$, and it holds
that
\begin{align}
\mathcal{A}u(t,\cdot)(x) & =\int_{\Rd}\mathcal{A}q(t,\cdot,y)(x)\phi(y)\mathrm{d}y\nonumber \\
 & \quad+\int_{0}^{t}\int_{\Rd}\mathcal{A}q(t-s,\cdot,z)(x)\varphi(s,z)\mathrm{d}z\mathrm{d}s.\label{eq 1: pf fund. sol.}
\end{align}
For $t\in(0,1]$ and $x\in\Rd$, we have
\begin{equation}
{\color{red}{\color{black}\partial_{t}I_{1}(t,x)=\int_{\Rd}\mathcal{A}^{y}q(t,\cdot,y)(x)\phi(y)\mathrm{d}y,}}\label{eq 2: pf fund. sol.}
\end{equation}
and, by Proposition \ref{prop2:layerpotential},
\begin{equation}
{\color{black}{\color{black}{\color{black}{\color{red}{\color{black}\partial_{t}I_{2}(t,x)=\varphi(t,x)+\int_{0}^{t}\int_{\Rd}\mathcal{A}^{z}q(t-s,\cdot,z)(x)\varphi(s,z)\mathrm{d}z{\color{black}\mathrm{d}}s.}}}}}\label{eq 3: pf fund. sol.}
\end{equation}
So for $t\in(0,1]$ and $x\in\Rd$,
\begin{align}
\varphi(t,x) & =\int_{\Rd}\Phi(t,x,y)\phi(y)\mathrm{d}y\nonumber \\
 & \overset{(\ref{defi Phi})}{=}\int_{\Rd}F(t,x,y)\phi(y)\mathrm{d}y\nonumber \\
 & \qquad+\int_{\Rd}\left(\int_{0}^{t}\int_{\Rd}F(t-s,x,z)\Phi(s,z,y)\mathrm{d}z\mathrm{d}s\right)\phi(y)\mathrm{d}y\nonumber \\
 & =\int_{\Rd}\left(\mathcal{A}-\mathcal{A}^{y}\right)q(t,\cdot,y)(x)\phi(y)\mathrm{d}y\nonumber \\
 & \qquad+\int_{0}^{t}\int_{\Rd}\left(\mathcal{A}-\mathcal{A}^{z}\right)q(t-s,\cdot,z)(x)\varphi(s,z)\mathrm{d}z\mathrm{d}s.\label{eq 4: pf fund. sol.}
\end{align}
Combining (\ref{eq 1: pf fund. sol.}), (\ref{eq 2: pf fund. sol.}),
(\ref{eq 3: pf fund. sol.}) and (\ref{eq 4: pf fund. sol.}), we
arrive at (\ref{eq 0: pf fund. sol.}).

By Corollary \ref{cor: contin. of V}, we see that $u\in C_{b}((0,1]\times\Rd)$.
So it remains to show the continuity of $(t,x)\mapsto u(t,x)$ at
$(t,x)=(0,x_{0}),$ where $x_{0}\in\Rd$. We have
\begin{align*}
|u(t,x)-u(0,x_{0})| & \le|u(t,x)-u(0,x)|+|\phi(x)-\phi(x_{0})|.
\end{align*}
So it suffices to show that $\lim_{t\to0}u(t,x)=u(0,x)$, and the
convergence is uniform with respect to $x\in\Rd$. Noting that $|\phi(y)-\phi(x)|\le c_{1}(1\wedge|x-y|^{\alpha/2})$,
we obtain
\begin{align*}
|I_{1}(t,x)-\phi(x)| & \le\left|\int_{\Rd}q(t,x,y)\left[\phi(y)-\phi(x)\right]\mathrm{d}y\right|\\
 & \qquad+\left|\int_{\Rd}q(t,x,y)\phi(x)\mathrm{d}y-\phi(x)\right|\\
 & \le c_{2}\int_{\Rd}\varrho_{\alpha}^{\alpha/2}(t,y-x)\mathrm{d}y+\left|\phi(x)\int_{\Rd}\left[f_{t}^{y}(y-x)-f_{t}^{x}(y-x)\right]\mathrm{d}y\right|\\
 & \overset{(\ref{esti:f_t_differof_y_y'})}{\le}c_{3}t^{1/2}+c_{4}t^{\theta/\alpha},
\end{align*}
which shows that $\lim_{t\to0}\sup_{x\in\Rd}|I_{1}(t,x)-\phi(x)|=0$.
Finally, it follows from (\ref{eq 1: conti. of V}) that $\lim_{t\to0}\sup_{x\in\Rd}|I_{2}(t,x)|=0$.
So $u(t,x)\to u(0,x)$ uniformly in $x\in\Rd$ as $t\to0$.

Since $\partial_{t}u(t,x)=\partial_{t}I_{1}(t,x)+\partial_{t}I_{2}(t,x)$,
the continuity of \textcolor{black}{$t\mapsto\partial_{t}u(t,x)$
follows easily by (}\ref{eq 2: pf fund. sol.}), (\ref{eq 3: pf fund. sol.})
and\textcolor{black}{{} Proposition }\ref{prop2:layerpotential}. \textcolor{black}{Noting
that $x\mapsto\mathcal{A}^{y}q(t,\cdot,y)(x)$ is continuous and}
\textcolor{black}{for $|x-x_{0}|\le t^{1/\alpha}$,}
\begin{align*}
\lvert\mathcal{A}^{y}q(t,\cdot,z)(x)\rvert & \le c_{5}\varrho_{0}^{0}(t,x-z)\overset{(\ref{ineq 1: chen})}{\le}c_{6}\varrho_{0}^{0}(t,x_{0}-z),
\end{align*}
\textcolor{black}{the continuity of $x\mapsto\partial_{t}I_{1}(t,x)$
follows by (}\ref{eq 2: pf fund. sol.}) and dominated convergence.
Similarly, \textcolor{black}{$x\mapsto\partial_{t}I_{2}(t,x)$ is
also continuous. So }the continuity of \textcolor{black}{$x\mapsto\partial_{t}u(t,x)$
follows. }This completes the proof.
\end{proof}
\begin{prop}
\label{prop: p is the density}Let $\left(X,\left(\mathbf{{P}}^{x}\right)\right)$
be the Markov process associated with the operator $\mathcal{A}$
defined in \emph{(\ref{defi 2: A})}. Then the function $p(t,x,y)$,
$(t,x,y)\in(0,1]\times\mathbb{R}^{2d}$, is the transition density
of $\left(X,\left(\mathbf{{P}}^{x}\right)\right)$, namely, for each
$0<t\le1$ and $x\in\Rd$,
\[
\mathbf{{P}}^{x}\left(X_{t}\in E\right)=\int_{E}p(t,x,y)\mathrm{d}y,\quad\forall E\in\mathcal{B}(\Rd).
\]

\end{prop}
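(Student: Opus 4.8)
The plan is to deduce the statement from the identity
\[
\mathbf{{P}}^{x}\text{-expectation } \mathbf{{E}}^{x}\bigl[\phi(X_{t})\bigr]=\int_{\Rd}p(t,x,y)\phi(y)\mathrm{d}y=:u(t,x),\qquad\phi\in C_{0}^{\infty}(\Rd),\ t\in(0,1].
\]
Once this is known, taking $\phi\ge0$ together with the continuity of $p$ from Proposition \ref{Prop. esti: p(t,x,y) } gives $p(t,x,\cdot)\ge0$; letting $\phi\uparrow1$ gives $\int_{\Rd}p(t,x,y)\mathrm{d}y=1$; and a monotone class argument extends the identity to $\phi=\mathbf{1}_{E}$, $E\in\mathcal{B}(\Rd)$, which is the assertion. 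Throughout I use Proposition \ref{Prop. : p is funda. solu.}: $u\in C_{b}([0,1]\times\Rd)$, $u(0,\cdot)=\phi$, $\partial_{t}u(t,\cdot)(x)=\mathcal{A}u(t,\cdot)(x)$ on $(0,1]\times\Rd$, and $(t,x)\mapsto\partial_{t}u(t,x)$ is continuous in each variable; as in the proof of Corollary \ref{cor: contin. of V}, $\partial_{t}u$ is moreover bounded on $[\varepsilon,1]\times\Rd$ for each $\varepsilon\in(0,1)$.

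The core step is to show that, for each fixed $t\in(0,1]$, the process $r\mapsto u(t-r,X_{r})$ is an $(\mathcal{F}_{r},\mathbf{{P}}^{x})$-martingale on $[0,t)$. Fix $\varepsilon\in(0,t)$ and work on $[0,t-\varepsilon]$. First I record the time-inhomogeneous form of the martingale problem: since $\mathbf{{P}}^{x}$ solves the martingale problem for $\mathcal{A}$ on $C_{b}^{2}(\Rd)$, for every $g$ that is bounded, $C^{1}$ in time and $C^{2}$ in space with bounded derivatives the process
\[
g(r,X_{r})-g(0,X_{0})-\int_{0}^{r}\bigl(\partial_{\sigma}g(\sigma,\cdot)+\mathcal{A}g(\sigma,\cdot)\bigr)(X_{\sigma})\,\mathrm{d}\sigma
\]
is a $\mathbf{{P}}^{x}$-martingale; this is standard, obtained by approximating $g$ by functions piecewise constant in time. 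I apply it with $g(\sigma,z):=u_{n}(t-\sigma,z)$, where $u_{n}(\tau,\cdot):=u(\tau,\cdot)*\rho_{1/n}$ is a spatial mollification: on $[0,t-\varepsilon]\times\Rd$ this $g$ has the required regularity, and $\partial_{\sigma}g(\sigma,\cdot)+\mathcal{A}g(\sigma,\cdot)=\mathcal{A}u_{n}(t-\sigma,\cdot)-w_{n}(t-\sigma,\cdot)$ with $w_{n}(\tau,\cdot):=\bigl(\partial_{\tau}u(\tau,\cdot)\bigr)*\rho_{1/n}$. Now let $n\to\infty$: $u_{n}\to u$ and $w_{n}\to\partial_{\tau}u$ pointwise and boundedly on $[\varepsilon,1]\times\Rd$ by the boundedness and separate continuity of $u$ and $\partial_{\tau}u$, while for the remaining term
\[
\mathcal{A}u_{n}(\tau,z)=\int_{\Rd}\rho_{1/n}(w)\,\mathcal{A}^{z}u(\tau,\cdot)(z-w)\,\mathrm{d}w\longrightarrow\mathcal{A}^{z}u(\tau,\cdot)(z)=\mathcal{A}u(\tau,\cdot)(z)\qquad(n\to\infty),
\]
the first equality holding because $\mathcal{A}$ applied to the convolution $u(\tau,\cdot)*\rho_{1/n}$, with its coefficient $n(z,\cdot)$ frozen at $z$, is exactly $\int\rho_{1/n}(w)\,\mathcal{A}^{z}u(\tau,\cdot)(z-w)\,\mathrm{d}w$, and the convergence (together with a uniform-in-$n$ bound for $\tau$ in compacts of $(0,1]$) following from the continuity and local boundedness of $z'\mapsto\mathcal{A}^{w}u(\tau,\cdot)(z')$; this last fact I would read off from the decomposition $u=I_{1}+I_{2}$ of Proposition \ref{Prop. : p is funda. solu.}, the identities $\mathcal{A}^{w}I_{1}(\tau,\cdot)(z')=\int\mathcal{A}^{w}q(\tau,\cdot,y)(z')\phi(y)\mathrm{d}y$ and the analogue of (\ref{eq:AV}) for $\mathcal{A}^{w}I_{2}$, together with (\ref{esti: Aq}) and the convolution estimates of Section \ref{subsec:Some-inequalities-and}, exactly as in the proof of Proposition \ref{prop1:layerpotential}. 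Since $\partial_{\tau}u-\mathcal{A}u(\tau,\cdot)\equiv0$, dominated convergence for conditional expectations shows that $r\mapsto u(t-r,X_{r})$ is a martingale on $[0,t-\varepsilon]$, hence, as $\varepsilon\downarrow0$, on $[0,t)$.

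The conclusion then follows quickly. The martingale property gives $u(t,x)=\mathbf{{E}}^{x}\bigl[u(t-r,X_{r})\bigr]$ for every $r\in[0,t)$; letting $r\uparrow t$ and using that $u$ is bounded and continuous and $X_{r}\to X_{t-}$, bounded convergence gives $u(t,x)=\mathbf{{E}}^{x}\bigl[u(0,X_{t-})\bigr]=\mathbf{{E}}^{x}\bigl[\phi(X_{t-})\bigr]$. Since $\mathbf{{P}}^{x}(X_{t-}=X_{t})=1$ for each fixed $t>0$ (the process has no fixed time of discontinuity, the Lévy kernel of $\mathcal{A}$ being absolutely continuous in time) — or, to avoid this point, letting $t$ decrease to a given $t_{0}$ in the already proved identity $\mathbf{{E}}^{x}[\phi(X_{t-})]=u(t,x)$ and using right-continuity of paths and continuity of $u$ — we get $\mathbf{{E}}^{x}[\phi(X_{t})]=u(t,x)$, and the proof is complete after the monotone class step above. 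I expect the one genuinely delicate point to be the interchange of $\mathcal{A}$ with the mollification, that is, the locally uniform continuity and boundedness of $z'\mapsto\mathcal{A}^{w}u(\tau,\cdot)(z')$: because $n(x,h)$ really depends on $x$, $\mathcal{A}$ is not a convolution operator, so this must be extracted from the structure of $u=I_{1}+I_{2}$ and the estimates of Sections 3--4; everything else is a routine application of Levi's method, in the spirit of \cite[Chap.~1, Theorems 4--5]{MR0181836}.
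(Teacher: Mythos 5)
Your proof follows essentially the same route as the paper's: mollify $\tilde u(s,x)=u(t-s,x)$ in space, apply the time-dependent form of the martingale problem to the mollified function, pass to the limit $n\to\infty$ via the identity $\mathcal{A}u_n(\tau,z)=\int_{\Rd}\rho_{1/n}(w)\,\mathcal{A}^{z}u(\tau,\cdot)(z-w)\,\mathrm{d}w$ (whose justification you correctly identify as the one delicate point, requiring continuity of $z'\mapsto\mathcal{A}^{z}u(\tau,\cdot)(z')$), and finally let the time variable approach $t$. The only differences are peripheral: the paper cites Stroock \cite[Theorem (1.1)]{MR0433614} for the time-dependent martingale property rather than sketching the piecewise-constant approximation, and for the passage from $X_{t-}$ to $X_t$ it invokes \cite[Chap.~3, Lemma 7.7]{MR838085} together with right-continuity in $t$, which amounts to your second alternative (your first alternative, ``no fixed discontinuity times because the L\'evy kernel is absolutely continuous,'' is stated too loosely to stand on its own).
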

\begin{proof}
Let $0<t\le1$ be fixed. Consider $\phi\in C_{0}^{\infty}(\Rd)$ that
is arbitrary. Define $u(s,x):=\int_{\Rd}p(s,x,y)\phi(y)\mathrm{d}y,\ s>0,x\in\Rd$,
and $u(0,\cdot)=\phi$. Let
\[
\tilde{u}(s,x):=u(t-s,x),\quad0\le s\le t,\,x\in\Rd.
\]
By Theorem \ref{Prop. : p is funda. solu.}, $\tilde{u}\in C_{b}([0,t]\times\Rd)$
and
\begin{equation}
\partial_{s}\tilde{u}(s,x)+\mathcal{A}\tilde{u}(s,x)=0,\quad0\le s<t,\,x\in\Rd,\quad\tilde{u}(t,x)=\phi(x).\label{eq: for u tilde}
\end{equation}

Let $\left(\rho_{n}\right)_{n\in\mathbb{N}}$ be a mollifying sequence
in $\Rd$. Set
\[
\tilde{u}_{n}(s,\cdot):=\tilde{u}(s,\cdot)\ast\rho_{n}.
\]
Then for $0<\varepsilon<t$, we have $\tilde{u}_{n}\in C_{b}^{1,2}([0,t-\varepsilon]\times\Rd)$.
Indeed, for $(s,x)\in[0,t-\varepsilon]\times\Rd$,
\[
\partial_{s}\tilde{u}_{n}(s,x)=\int_{\Rd}\partial_{s}\tilde{u}(s,x-y)\rho_{n}(y)\mathrm{d}y.
\]
Note that for each $x\in\Rd$, $s\mapsto\partial_{s}\tilde{u}(s,x)$
is continuous, which implies that for each $x\in\Rd$, $s\mapsto\partial_{s}\tilde{u}_{n}(s,x)$
is continuous. Since, by (\ref{esti: prop 2, AV}), (\ref{eq 1: pf fund. sol.})
and (\ref{eq: for u tilde}), $\partial_{s}\tilde{u}(s,x)$ is bounded
on $[0,t-\varepsilon]\times\Rd$, it follows that $\partial_{s}\tilde{u}_{n}(s,x)$
is Lipschitz in $x$, uniformly with respect to $s\in[0,t-\varepsilon]$.
Similarly to Corollary \ref{cor: contin. of V}, we conclude that
$\partial_{s}\tilde{u}_{n}\in C_{b}([0,t-\varepsilon]\times\Rd)$.
It is obvious that
\begin{align*}
\partial_{i}\tilde{u}_{n}(s,x) & =\int_{\Rd}\partial_{i}\rho_{n}(x-y)\tilde{u}(s,y)\mathrm{d}y\\
 & =\int_{\Rd}\tilde{u}(s,x-y)\partial_{i}\rho_{n}(y)\mathrm{d}y\in C_{b}([0,t-\varepsilon]\times\Rd).
\end{align*}
The cases for second order derivatives are similar. So $\tilde{u}_{n}\in C_{b}^{1,2}([0,t-\varepsilon]\times\Rd)$.

According to \cite[Theorem (1.1)]{MR0433614}, the process
\[
\tilde{u}_{n}(s,X_{s})-\int_{0}^{s}(\partial_{r}+\mathcal{A})\tilde{u}_{n}(r,X_{r})\mathrm{d}r,\quad s\in[0,t-\varepsilon],
\]
is a $\mathbf{{P}}^{x}$-martingale. So
\[
\mathbf{{E}}^{x}[\tilde{u}_{n}(t-\varepsilon,X_{t-\varepsilon})]-\mathbf{{E}}^{x}[\tilde{u}_{n}(0,X_{0})]=\mathbf{{E}}^{x}\left[\int_{0}^{t-\varepsilon}(\partial_{r}+\mathcal{A})\tilde{u}_{n}(r,X_{r})\mathrm{d}r\right].
\]
As $n\to\infty$, it is clear that $\partial_{s}\tilde{u}_{n}(s,x)\to\partial_{s}\tilde{u}(s,x)$,
since for each $s\in[0,t-\varepsilon]$, $x\mapsto\partial_{s}\tilde{u}(s,x)$
is continuous; moreover, according to (\ref{bound for AV on =00005Be,1=00005D}),
\begin{align*}
\mathcal{A}\tilde{u}_{n}(s,x) & =\mathcal{A}\left(\int_{\Rd}\tilde{u}(s,x-y)\rho_{n}(y)\mathrm{d}y\right)\\
 & =\int_{\Rd}\mathcal{A}\tilde{u}(s,\cdot-y)(x)\rho_{n}(y)\mathrm{d}y\to\mathcal{A}\tilde{u}(s,x),
\end{align*}
where we used the fact that \textcolor{black}{for each $s\in[0,t-\varepsilon]$,
$x\mapsto\mathcal{A}\tilde{u}(s,\cdot)(x)$ is continuous}. So $(\partial_{r}+\mathcal{A})\tilde{u}_{n}(r,X_{r})$
converges boundedly and pointwise to $(\partial_{r}+\mathcal{A})\tilde{u}(r,X_{r})$.
By dominated convergence, we obtain
\[
\mathbf{{E}}^{x}[\tilde{u}(t-\varepsilon,X_{t-\varepsilon})]-\mathbf{{E}}^{x}[\tilde{u}(0,X_{0})]=\mathbf{{E}}^{x}\left[\int_{0}^{t-\varepsilon}(\partial_{r}+\mathcal{A})\tilde{u}(r,X_{r})\mathrm{d}r\right]=0.
\]
So
\[
\mathbf{{E}}^{x}[u(\varepsilon,X_{t-\varepsilon})]=\tilde{u}(0,x)=u(t,x).
\]
Letting $\varepsilon\to0$, we get
\[
u(t,x)=\mathbf{{E}}^{x}[u(0,X_{t-})]=\mathbf{{E}}^{x}[u(0,X_{t})]=\mathbf{{E}}^{x}[\phi(X_{t})],
\]
at least for $t\in I$:=$\left\{ t\in(0,1]:X_{t-}=X_{t},\ \mathbf{{P}}^{x}\mbox{-a.s.}\right\} $.
By \cite[Chap. 3, Lemma 7.7]{MR838085}, the set $(0,1]\setminus I$
is at most countable. Then by the right continuity of $t\mapsto X_{t}$
and the continuity of $t\mapsto u(t,x)$, we obtain for all $t\in(0,1]$,
\[
\mathbf{{E}}^{x}[\phi(X_{t})]=u(t,x)=\int_{\Rd}p(t,x,y)\phi(y)\mathrm{d}y,\quad\forall\phi\in C_{0}^{\infty}(\Rd).
\]
This means that $p(t,x,\cdot)$ is the density function of the distribution
of $X_{t}$ under $\mathbf{{P}}^{x}$.
\end{proof}
The next proposition is about a gradient estimate on $p(t,x,y)$ for
the case $1<\alpha<2$.
\begin{prop}
\label{prop: grad on p, t <1}Suppose that $1<\alpha<2$. Then there
exists a constant $C_{31}=C_{31}(d,\alpha,\kappa_{0},\kappa_{1},\kappa_{2},\theta)>0$
such that for all $(t,x,y)\in(0,1]\times\Rd\times\Rd$,
\[
|\nabla_{x}p(t,x,y)|\le C_{31}t^{1-1/\alpha}\left(t^{1/\alpha}+|x|\right)^{-d-\alpha}.
\]
\end{prop}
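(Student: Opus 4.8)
The plan is to differentiate the representation $p(t,x,y)=q(t,x,y)+q\otimes\Phi(t,x,y)$ from (\ref{defi: p(t,x,y)}) term by term. For the first term, since $q(t,x,y)=f_{t}^{y}(y-x)$ the chain rule gives $\nabla_{x}q(t,x,y)=-(\nabla f_{t}^{y})(y-x)$, so by Remark \ref{rem: f_t^y} and the gradient estimate (\ref{upperbound:gradientf_t}) applied to $f_{t}^{y}$ one obtains
\[
|\nabla_{x}q(t,x,y)|\le C_{16}\,\varrho_{\alpha-1}^{0}(t,x-y)=C_{16}\,t^{1-1/\alpha}\bigl(t^{1/\alpha}+|x-y|\bigr)^{-d-\alpha}
\]
for all $(t,x,y)\in(0,1]\times\Rd\times\Rd$, which already has the desired form. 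Hence it remains to estimate $\nabla_{x}(q\otimes\Phi)(t,x,y)$.

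The second step is to show that, for fixed $t\in(0,1]$ and $y\in\Rd$, the map $x\mapsto(q\otimes\Phi)(t,x,y)$ is continuously differentiable with
\[
\nabla_{x}(q\otimes\Phi)(t,x,y)=\int_{0}^{t}\int_{\Rd}\nabla_{x}q(t-s,x,z)\,\Phi(s,z,y)\,\mathrm{d}z\,\mathrm{d}s .
\]
I would justify this interchange exactly as in Step~1 of the proof of Proposition \ref{prop1:layerpotential}. Fix an arbitrary $x_{0}\in\Rd$; for $s<t$ and $x$ with $|x-x_{0}|\le(t-s)^{1/\alpha}$, inequality (\ref{ineq 1: chen}) together with (\ref{upperbound:gradientf_t}) (for $f^{z}$) gives $|\nabla_{x}q(t-s,x,z)|\le c\,\varrho_{\alpha-1}^{0}(t-s,x_{0}-z)$; combining this with the bound (\ref{esti:Phi}) on $\Phi$ and carrying out the $z$-integration by means of (\ref{esti2:rho}) produces, for $x$ in a fixed bounded neighbourhood of $x_{0}$, a dominating function of $s$ that is independent of $x$ and integrable on $[0,t]$ --- here one uses $1<\alpha<2$, so that the factor $(t-s)^{-1/\alpha}$ generated by the spatial integral of $\varrho_{\alpha-1}^{0}(t-s,\cdot)$ stays integrable near $s=t$, and $\hat\theta>0$, so that the factors $s^{(\hat\theta-\alpha)/\alpha}$ are integrable near $s=0$. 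Dominated convergence then yields both the differentiation formula and the continuity of its right-hand side in $x$.

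With the formula in hand, the estimate follows by inserting $|\nabla_{x}q(t-s,x,z)|\le C_{16}\varrho_{\alpha-1}^{0}(t-s,x-z)$ and (\ref{esti:Phi}), i.e. $|\Phi(s,z,y)|\le C_{26}(\varrho_{\hat\theta}^{0}+\varrho_{0}^{\hat\theta})(s,z-y)$, and then applying the convolution bound (\ref{esti3:rho}) with $\gamma_{1}=\alpha-1$, $\beta_{1}=0$ against each of the two summands $\varrho_{\hat\theta}^{0}$ ($\gamma_{2}=\hat\theta$, $\beta_{2}=0$) and $\varrho_{0}^{\hat\theta}$ ($\gamma_{2}=0$, $\beta_{2}=\hat\theta$); all hypotheses of (\ref{esti3:rho}) hold since $\alpha>1$ gives $\gamma_{1}+\beta_{1}>0$ and $\hat\theta\in(0,\alpha/4]$. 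This yields
\[
|\nabla_{x}(q\otimes\Phi)(t,x,y)|\le c\,\bigl(\varrho_{\alpha-1+\hat\theta}^{0}+\varrho_{\alpha-1}^{\hat\theta}\bigr)(t,x-y).
\]
Because $t\le1$ and $\hat\theta>0$ one has $\varrho_{\alpha-1+\hat\theta}^{0}(t,x-y)=t^{\hat\theta/\alpha}\varrho_{\alpha-1}^{0}(t,x-y)\le\varrho_{\alpha-1}^{0}(t,x-y)$, and trivially $\varrho_{\alpha-1}^{\hat\theta}(t,x-y)\le\varrho_{\alpha-1}^{0}(t,x-y)$; hence $|\nabla_{x}(q\otimes\Phi)(t,x,y)|\le c\,\varrho_{\alpha-1}^{0}(t,x-y)$. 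Adding the two term-by-term bounds gives $|\nabla_{x}p(t,x,y)|\le C_{31}\,t^{1-1/\alpha}\bigl(t^{1/\alpha}+|x-y|\bigr)^{-d-\alpha}$.

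The step I expect to be the real obstacle is the second one, namely moving $\nabla_{x}$ inside $\int_{0}^{t}$, since $\nabla_{x}q(t-s,x,z)$ admits no $x$-uniform pointwise bound as $s\uparrow t$. This is precisely where the hypothesis $1<\alpha<2$ enters: it makes the time-singularity $(t-s)^{-1/\alpha}$ produced by the spatial convolution integrable, so that the dominated-convergence argument of Proposition \ref{prop1:layerpotential} applies without change. Everything else is a direct application of the $\varrho$-calculus of Section~2 and of the estimates already established for $q$ and $\Phi$.
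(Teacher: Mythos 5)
Your proof is correct and takes essentially the same route as the paper: bound $|\nabla_x q|$ via (\ref{upperbound:gradientf_t}), differentiate $q\otimes\Phi$ under the integral sign, then apply (\ref{esti3:rho}) and the monotonicity of $\varrho$ in its indices. The only difference is that the paper simply asserts the interchange of $\nabla_x$ with $\int_0^t\int_{\Rd}$, whereas you spell out the dominated-convergence justification (borrowed from Step 1 of Proposition \ref{prop1:layerpotential}), which is a welcome addition; note also that the statement as printed has a typo ($|x|$ should read $|x-y|$, as in (\ref{Theorem 2, eq 2})), and your conclusion is the corrected version.
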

\begin{proof}
Recall that $p=q+q\otimes\Phi.$ By (\ref{upperbound:gradientf_t})
and Remark \ref{rem: f_t^y}, we obtain
\begin{equation}
|\nabla_{x}q(t,x,y)|\le c_{1}\varrho_{\alpha-1}^{0}(t,x-y),\quad(t,x,y)\in(0,1]\times\Rd\times\Rd.\label{esti1:gradp}
\end{equation}
Since
\[
\nabla_{x}\left(q\otimes\Phi(t,x,y)\right)=\int_{0}^{t}\int_{\Rd}\nabla_{x}q(t-s,x,z)\Phi(s,z,y)\mathrm{d}z\mathrm{d}s,
\]
we get that for $(t,x,y)\in(0,1]\times\Rd\times\Rd$,
\begin{align}
 & |\nabla_{x}\left(q\otimes\Phi(t,x,y)\right)|\nonumber \\
 & \quad\le c_{2}\int_{0}^{t}\int_{\Rd}\varrho_{\alpha-1}^{0}(t-s,x-z)\left\{ \varrho_{\hat{\theta}}^{0}(s,z-y)+\varrho_{0}^{\hat{\theta}}(s,z-y)\right\} \mathrm{d}z\mathrm{d}s\nonumber \\
 & \quad\overset{(\ref{esti3:rho})}{\le}c_{3}\varrho_{\hat{\theta}+\alpha-1}^{0}(t,x,y)+c_{4}\varrho_{\alpha-1}^{\hat{\theta}}(t,x,y)\le c_{5}\varrho_{\alpha-1}^{0}(t,x,y).\label{esti2:gradp}
\end{align}
Now, the assertion follows by (\ref{esti1:gradp}) and (\ref{esti2:gradp}).
\end{proof}
We conclude this section with the following theorem.
\begin{prop}
Consider the operator $\mathcal{A}$ \emph{given in (\ref{defi 2: A})}
and assume that $n(\cdot,\cdot)$ satisfies Assumption \ref{ass1:the function n}.
Then for the Markov process $\left(X,\left(\mathbf{{P}}^{x}\right)\right)$
associated with $\mathcal{A}$, there exists a jointly continuous
transition density $p(t,x,y)$ such that for all $t>0$, $x\in\Rd$
and $E\in\mathcal{B}(\Rd)$,
\[
\mathbf{{P}}^{x}\left(X_{t}\in E\right)=\int_{E}p(t,x,y)\mathrm{d}y.
\]
Moreover, for each $T>0,$ there exists a constant $C_{32}=C_{32}(d,\alpha,\kappa_{0},\kappa_{1},\kappa_{2},\theta,T)>0$
such that
\begin{equation}
p(t,x,y)\le C_{32}t\left(t^{1/\alpha}+|x-y|\right)^{-d-\alpha},\quad x,y\in\Rd,\ 0<t\le T.\label{Theorem 2, eq 1}
\end{equation}
For the case $1<\alpha<2$, there exists also a constant $C_{33}=C_{33}(d,\alpha,\kappa_{0},\kappa_{1},\kappa_{2},\theta,T)>0$
such that
\begin{equation}
|\nabla_{x}p(t,x,y)|\le C_{33}t^{1-1/\alpha}\left(t^{1/\alpha}+|x-y|\right)^{-d-\alpha},\quad x,y\in\Rd,\ 0<t\le T.\label{Theorem 2, eq 2}
\end{equation}
\label{theorem Consider-the-operator}
\end{prop}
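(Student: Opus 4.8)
The plan is to reduce the statement to the case $t\in(0,1]$, which is already completely settled, and then to propagate all three conclusions to arbitrary $t>0$ by the Markov property of $\left(X,\left(\mathbf{P}^{x}\right)\right)$ together with the convolution inequality \eqref{esti2:rho}. For $t\in(0,1]$ we already know, from Proposition \ref{Prop. esti: p(t,x,y) } and Proposition \ref{prop: p is the density}, that $p(t,x,y)=q(t,x,y)+q\otimes\Phi(t,x,y)$ is jointly continuous on $(0,1]\times\Rd\times\Rd$, is the transition density of $\left(X,\left(\mathbf{P}^{x}\right)\right)$, and satisfies $p(t,x,y)\le C_{29}\varrho_{\alpha}^{0}(t,x-y)=C_{29}\,t\,(t^{1/\alpha}+|x-y|)^{-d-\alpha}$; moreover, by Proposition \ref{prop: grad on p, t <1}, when $1<\alpha<2$ one has $|\nabla_{x}p(t,x,y)|\le C_{31}\varrho_{\alpha-1}^{0}(t,x-y)=C_{31}\,t^{1-1/\alpha}(t^{1/\alpha}+|x-y|)^{-d-\alpha}$ on $(0,1]\times\Rd\times\Rd$. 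So it only remains to push these to $t>1$.

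Fix $T>0$ and choose $N=N(T)\in\BN$ with $T/N\le 1$. For $t\in(0,T]$ I would set
\[
p(t,x,y):=\int_{(\Rd)^{N-1}}p\big(\tfrac{t}{N},x,z_{1}\big)p\big(\tfrac{t}{N},z_{1},z_{2}\big)\cdots p\big(\tfrac{t}{N},z_{N-1},y\big)\,\mathrm{d}z_{1}\cdots\mathrm{d}z_{N-1},
\]
the integral being finite by the bound on $p(\tfrac tN,\cdot,\cdot)$ and \eqref{esti1:rho}. On any compact subset of $(0,T]\times\Rd\times\Rd$ the time $\tfrac tN$ ranges over a compact subset of $(0,1]$, where $p$ is (uniformly) continuous, and the product of the $C_{29}\varrho_{\alpha}^{0}$-bounds is a $z$-integrable majorant that is locally uniform in $(t,x,y)$; dominated convergence then makes $p$ jointly continuous on $(0,\infty)\times\Rd\times\Rd$. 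Since $p(\tfrac tN,\cdot,\cdot)$ is the transition density of $X$ over a time step of length $\tfrac tN\le 1$, iterating the Markov property $N$ times shows that $p(t,x,\cdot)$ is a density of $X_{t}$ under $\mathbf{P}^{x}$; in particular this definition agrees with the one made for $t\in(0,1]$ (densities are a.e.\ unique and both sides are continuous) and is independent of the admissible choice of $N$, so $p$ is a well-defined, jointly continuous transition density on $(0,\infty)\times\Rd\times\Rd$.

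For \eqref{Theorem 2, eq 1} I would insert $p(\tfrac tN,u,v)\le C_{29}\varrho_{\alpha}^{0}(\tfrac tN,u-v)$ into the $(N-1)$-fold integral and convolve factor by factor, using
\[
\int_{\Rd}\varrho_{\alpha}^{0}(s,x-z)\,\varrho_{\alpha}^{0}(s',z)\,\mathrm{d}z\le c(d,\alpha)\,\varrho_{\alpha}^{0}(s+s',x),\qquad s,s'>0,
\]
which follows from \eqref{esti2:rho} in the range $s+s'\le 1$ and the scaling identity $\varrho_{\alpha}^{0}(\lambda t,\lambda^{1/\alpha}x)=\lambda^{-d/\alpha}\varrho_{\alpha}^{0}(t,x)$. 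Iterating $N-1$ times gives $p(t,x,y)\le C_{32}\varrho_{\alpha}^{0}(t,x-y)$ on $(0,T]$ with $C_{32}=C_{29}^{N}c^{N-1}$, which depends on $N$ and hence on $T$, as asserted. For the gradient bound when $1<\alpha<2$, re-associate the $(N-1)$-fold integral as $p(t,x,y)=\int_{\Rd}p(\tfrac tN,x,z)\,p(\tfrac{(N-1)t}{N},z,y)\,\mathrm{d}z$; since $\tfrac tN\le 1$, the bound $|\nabla_{x}p(\tfrac tN,x,z)|\le C_{31}\varrho_{\alpha-1}^{0}(\tfrac tN,x-z)$ of Proposition \ref{prop: grad on p, t <1} gives a locally integrable majorant, so one may differentiate under the integral sign, and then bounding the second factor by $C_{32}\varrho_{\alpha}^{0}(\tfrac{(N-1)t}{N},\cdot)$ and using $\int_{\Rd}\varrho_{\alpha-1}^{0}(s,x-z)\varrho_{\alpha}^{0}(s',z)\,\mathrm{d}z\le c(d,\alpha,N)\varrho_{\alpha-1}^{0}(s+s',x)$ for the fixed ratio $s/(s+s')=1/N$ (again from \eqref{esti2:rho} and scaling) yields \eqref{Theorem 2, eq 2} with a $T$-dependent constant. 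I expect this last convolution estimate to be the only delicate point: the coefficient coming from \eqref{esti2:rho} contains a factor $s^{-1/\alpha}s'$, so the clean bound $\int\varrho_{\alpha-1}^{0}(s,\cdot)\varrho_{\alpha}^{0}(s',\cdot)\le c\,\varrho_{\alpha-1}^{0}(s+s',\cdot)$ degrades as $s/(s+s')\to 0$; this is precisely why the time step $\tfrac tN$ must be kept comparable to $t$ and why the constants in \eqref{Theorem 2, eq 1}–\eqref{Theorem 2, eq 2} are permitted to depend on $T$. Everything else is a routine iteration of the $t\in(0,1]$ results.
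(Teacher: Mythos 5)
Your proof is correct, but it takes a genuinely different route from the paper's. The paper handles arbitrary $T>0$ by a single global rescaling: setting $a=T^{-1/\alpha}$, it applies Lemma \ref{claim:(Rescaling)} to pass to the process $Y_t=aX_{a^{-\alpha}t}$, which corresponds to the kernel $\tilde n(x,h)=n(x/a,h/a)$; Remark \ref{rem: invariance of holder} shows $\tilde n$ still satisfies Assumption \ref{ass1:the function n} (with the H\"older constant multiplied by $a^{-\theta}$, which is absorbed into the $T$-dependence of $C_{32}$), so Propositions \ref{Prop. esti: p(t,x,y) }, \ref{prop: p is the density} and \ref{prop: grad on p, t <1} apply directly to $Y$ on $(0,1]$, and the scaling identity $p(t,x,y)=a^d\tilde p(a^\alpha t,ax,ay)$ converts the bounds back. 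You instead split $(0,T]$ into $N$ unit-length time steps, iterate the Chapman--Kolmogorov identity, and propagate the bound $C_{29}\varrho_\alpha^0$ through $N-1$ convolutions, using the consequence $\int\varrho_\alpha^0(s,x-z)\varrho_\alpha^0(s',z)\,\mathrm dz\le c\,\varrho_\alpha^0(s+s',x)$ of \eqref{esti2:rho} plus the homogeneity $\varrho_\gamma^0(\lambda t,\lambda^{1/\alpha}x)=\lambda^{(\gamma-d-\alpha)/\alpha}\varrho_\gamma^0(t,x)$; for the gradient you peel off a single step of length $t/N$, differentiate under the integral (justified by \eqref{ineq 1: chen} and the $\varrho_{\alpha-1}^0$-majorant), and apply the same convolution machinery with the ratio $s/(s+s')=1/N$ held fixed so that the unfavorable factor $s^{-1/\alpha}s'\sim t^{1-1/\alpha}$ stays comparable to the target. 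Both approaches are sound and both yield $T$-dependent constants; the rescaling argument is terser and exploits the structural invariance of the class of operators, whereas your iteration is more elementary and makes the growth of the constant in $T$ (geometric in $N\sim T$) explicit, at the cost of carefully checking the convolution estimates and the a.e./continuity argument for compatibility across different choices of $N$. One small remark: your gradient step quietly uses the already-established \eqref{Theorem 2, eq 1} to bound the factor $p\big(\tfrac{(N-1)t}{N},z,y\big)$, so the order in which the two bounds are proved matters; you do respect this order, and the argument is fine.
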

\begin{proof}
Let $T>0$ be fixed and set $a:=T^{-1/\alpha}$. Define $\mathbf{\tilde{P}}^{x}=\mathbf{P}^{x/a}$
and $Y_{t}:=aX_{a^{-\alpha}t}$, $t\ge0.$ By Lemma \ref{claim:(Rescaling)},
Remark \ref{rem: invariance of holder}, and Propositions \ref{Prop. esti: p(t,x,y) }
and \ref{prop: p is the density}, the Markov process $\left(Y,\left(\mathbf{{\tilde{P}}}^{x}\right)\right)$
has a jointly continuous transition density $\tilde{p}(t,x,y)$, $(t,x,y)\in(0,1]\times\mathbb{R}^{2d}$.
Moreover, there exists a constant $c_{1}=c_{1}(d,\alpha,\kappa_{0},\kappa_{1},\kappa_{2},\theta)>0$
such that
\begin{equation}
\tilde{p}(t,x,y)\overset{(\ref{main esti: p})}{\le}c_{1}t\left(t^{1/\alpha}+|x-y|\right)^{-d-\alpha},\quad t\in(0,1],\,x,y\in\Rd.\label{esti: ptilde}
\end{equation}
It follows that for each $t\in(0,T]$ and $x\in\Rd$, the law of $X_{t}$
under $\mathbf{{P}}^{x}$ is absolutely continuous with respect to
the Lebesgue measure and thus has a density function $p(t,x,\cdot)$.
Since

\[
\tilde{p}(t,x,y)\mathrm{d}y=\mathbf{{\tilde{P}}}^{x}(Y_{t}\in\mathrm{d}y)=\mathbf{{P}}^{x/a}(aX_{a^{-\alpha}t}\in\mathrm{d}y)=a^{-d}p\left(a^{-\alpha}t,x/a,y/a\right)\mathrm{d}y,
\]
we obtain
\begin{align*}
p(t,x,y)=a^{d}\tilde{p}(a^{\alpha}t,ax,ay) & \overset{(\ref{esti: ptilde})}{\le}c_{1}a^{d}a^{\alpha}t\left((a^{\alpha}t)^{1/\alpha}+|ax-ay|\right)^{-d-\alpha}\\
 & =c_{1}t\left(t^{1/\alpha}+|x-y|\right)^{-d-\alpha},\quad\forall x,y\in\Rd,\ 0<t\le T.
\end{align*}
Moreover, by the continuity of $\tilde{p}(t,x,y)$, the function $(t,x,y)\mapsto p(t,x,y)$
is continuous on $(0,T]\times\Rd\times\Rd$. In view of Proposition
\ref{prop: grad on p, t <1}, the estimate (\ref{Theorem 2, eq 2})
can be similarly proved. This completes the proof.
\end{proof}
\begin{rem}
\label{rem: p(t,x,y)}Let $p(t,x,y)$ be as in Proposition \ref{theorem Consider-the-operator}.
It follows from (\ref{defi: p(t,x,y)}), (\ref{esti: q tensor Phi})
and Lemma \ref{lem: lower esti for f_t} that there exist $t_{0}=t_{0}(d,\alpha,\kappa_{0},\kappa_{1},\kappa_{2},\theta)\in(0,1)$
and $C_{34}=C_{34}(d,\alpha,\kappa_{0},\kappa_{1},\kappa_{2},\theta)>0$
such that
\begin{equation}
p(t,x,y)\ge C_{34}t^{-d/\alpha},\quad\forall t\in(0,t_{0}],\ |x\text{\textminus}y|\le t^{1/\alpha}.\label{esti: lower bound p(t,x,y)}
\end{equation}
\end{rem}

\section{Transition density of the Markov process associated with $\mathcal{L}$}

In this section we assume $1<\alpha<2$. In this case, we still need
to handle the extra term $b(x)\cdot\nabla f(x)$ in the definition
of $\mathcal{L}f$. Throughout this section we assume Assumptions
\ref{ass1:the function n} and \ref{ass2:The-vector-field} are true.

Let $p(t,x,y)$ be as in Proposition \ref{theorem Consider-the-operator}.
It follows from the continuity of $p(t,x,y)$ and the Markov property
that
\begin{equation}
\int_{\Rd}p(s,x,z)p(t,z,y)\mathrm{d}z=p(t+s,x,y),\quad t,s>0,\ x,y\in\Rd.\label{eq: CK}
\end{equation}
By (\ref{eq: CK}) and Theorem \ref{theorem Consider-the-operator},
there exists a constant $C_{35}=C_{35}(d,\alpha,\kappa_{0},\kappa_{1},\kappa_{2},\theta)>0$
such that for all $t>0$ and $x,y\in\Rd$,
\begin{equation}
p(t,x,y)\le C_{35}e^{C_{35}t}t\left(t^{1/\alpha}+|x-y|\right)^{-d-\alpha}\label{bound: global pb}
\end{equation}
and
\begin{equation}
|\nabla_{x}p(t,x,y)|\le C_{35}e^{C_{35}t}t^{1-1/\alpha}\left(t^{1/\alpha}+|x-y|\right)^{-d-\alpha}.\label{bound: global gradient pb}
\end{equation}

For $t>0$ and $x,y\in\Rd,$ let $l_{0}(t,x,y):=p(t,x,y)$. Then
\begin{align*}
 & \int_{0}^{t}\int_{\Rd}\left|l_{0}(t-s,x,z)b(z)\cdot\nabla_{z}p(s,z,y)\right|\mathrm{d}z\mathrm{d}s\\
 & \quad\le\kappa_{3}C_{35}^{2}e^{C_{35}t}\int_{0}^{t}\int_{\Rd}\varrho_{\alpha}^{0}(t-s,x-z)\varrho_{\alpha-1}^{0}(s,z-y)\mathrm{d}z\mathrm{d}s\\
 & \quad\le\kappa_{3}C_{7}C_{35}^{2}e^{C_{35}t}\mathcal{B}\left(1,1-\alpha^{-1}\right)\varrho_{2\alpha-1}^{0}(t,x,y).
\end{align*}
So
\begin{equation}
l_{1}(t,x,y):=\int_{0}^{t}\int_{\Rd}l_{0}(t-s,x,z)b(z)\cdot\nabla_{z}p(s,z,y)\mathrm{d}z\mathrm{d}s,\quad t>0,\ x,y\in\Rd,\label{defi: l_1}
\end{equation}
is well-defined. Similarly, we can define recursively
\begin{equation}
l_{n}(t,x,y):=\int_{0}^{t}\int_{\Rd}l_{n-1}(t-s,x,z)b(z)\cdot\nabla_{z}p(s,z,y)\mathrm{d}z\mathrm{d}s,\quad t>0,\ x,y\in\Rd.\label{defi: l_n}
\end{equation}
By induction, we easily get that for $t>0$ and $x,y\in\Rd$,
\begin{align}
 & \left|l_{n}(t,x,y)\right|\nonumber \\
 & \le C_{35}\left(\kappa_{3}C_{7}C_{35}\right)^{n}e^{C_{35}t}\prod_{i=1}^{n}\mathcal{B}\left(\frac{\alpha+\left(i-1\right)(\alpha-1)}{\alpha},\frac{\alpha-1}{\alpha}\right)\varrho_{\alpha+n(\alpha-1)}^{0}(t,x,y)\nonumber \\
 & =\frac{C_{35}\left(\kappa_{3}C_{7}C_{35}\Gamma\left(1-\alpha^{-1}\right)\right)^{n}e^{C_{35}t}}{\Gamma\left(1+n\left(1-\alpha^{-1}\right)\right)}\varrho_{\alpha+n(\alpha-1)}^{0}(t,x,y)\label{esti1: l_n}
\end{align}
and
\begin{align}
 & \left|\nabla_{x}l_{n}(t,x,y)\right|\nonumber \\
 & \le C_{35}\left(\kappa_{3}C_{7}C_{35}\right)^{n}e^{C_{35}t}\prod_{i=1}^{n}\mathcal{B}\left(\frac{i(\alpha-1)}{\alpha},\frac{\alpha-1}{\alpha}\right)\varrho_{(n+1)(\alpha-1)}^{0}(t,x,y)\nonumber \\
 & =\frac{C_{35}\left(\kappa_{3}C_{7}C_{35}\right)^{n}\left(\Gamma\left(1-\alpha^{-1}\right)\right)^{n+1}e^{C_{35}t}}{\Gamma\left(\left(1+n\right)\left(1-\alpha^{-1}\right)\right)}\varrho_{(n+1)(\alpha-1)}^{0}(t,x,y).\label{esti1: l_n-1}
\end{align}

\begin{rem}
Similarly as above, for $(t,x,y)\in(0,\infty)\times\Rd\times\Rd,$
 define $|l|_{0}(t,x,y):=p(t,x,y)$ and then recursively
\[
|l|_{n}(t,x,y):=\int_{0}^{t}\int_{\Rd}|l|_{n-1}(t-s,x,z)|b(z)|\cdot|\nabla_{z}p(s,z,y)\mathrm{|d}z\mathrm{d}s.
\]
In view of Lemma \ref{Lemma uk_lambda}, we can follow the same argument
as in \cite[p.~191, (40)]{MR2283957} to obtain the existence of $\lambda_{0}>0$
and $C_{36}=C_{36}(d,\alpha,\kappa_{0},\kappa_{1},\kappa_{2},\theta,\kappa_{3})>0$
such that
\begin{equation}
\sum_{n=0}^{\infty}\int_{0}^{\infty}e^{-\lambda t}|l|_{n}(t,x,y)\mathrm{d}t\le C_{36}u_{\lambda}(x-y),\quad\forall\lambda>\lambda_{0},\ x,y\in\Rd,\label{esti: |l|_n}
\end{equation}
where $u_{\lambda}$ is defined in Sect. \ref{subsec:Some-inequalities-and}.
\end{rem}
\begin{prop}
\label{Prop: existence of l(t,x,y)}Assume $1<\alpha<2$. Let $\mathcal{L}$
and $\left(X,\left(\mathbf{{L}}^{x}\right)\right)$ be as in Theorem
\ref{thm: main}, and $l_{n}$ be as in \emph{(\ref{defi: l_n})}.
Then $\left(X,\left(\mathbf{{L}}^{x}\right)\right)$ has a jointly
continuous transition density $l(t,x,y)$ given by
\begin{equation}
l(t,x,y):=\sum_{n=0}^{\infty}l_{n}(t,x,y),\quad(t,x,y)\in(0,\infty)\times\Rd\times\Rd,\label{defi: l}
\end{equation}
where the series on the right-hand side of \emph{(\ref{defi: l})}
converges locally uniformly on $(0,\infty)\times\Rd\times\Rd$. Moreover,
it holds that for all $(t,x,y)\in(0,\infty)\times\Rd\times\Rd$,
\begin{equation}
l(t,x,y)=p(t,x,y)+\int_{0}^{t}\int_{\mathbb{R}^{d}}l(\tau,x,z)b(z)\cdot\nabla_{z}p(t-\tau,z,y)\mathrm{d}z\mathrm{d}\tau.\label{neweqduhamel}
\end{equation}
\end{prop}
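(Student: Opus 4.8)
The plan is to establish, in order, (i) local-uniform convergence of $\sum_n l_n$ and joint continuity of $l$, (ii) the Duhamel identity (\ref{neweqduhamel}), and (iii) that $l$ is the transition density of $(X,(\mathbf{L}^x))$; parts (i)--(ii) are routine consequences of the estimates (\ref{esti1: l_n})--(\ref{esti1: l_n-1}) and the convolution inequality (\ref{esti3:rho}), while (iii) is a verification argument against the martingale problem for $\mathcal{L}$, modelled on Proposition~\ref{prop: p is the density}. For (i), on a compact $K\subset(0,\infty)\times\Rd\times\Rd$ with $t\in[a,b]$, $|x-y|\le M$, one has $\varrho_{\alpha+n(\alpha-1)}^{0}(t,x-y)=t^{\,1+n(\alpha-1)/\alpha}(t^{1/\alpha}+|x-y|)^{-d-\alpha}\le b^{\,1+n(\alpha-1)/\alpha}a^{-(d+\alpha)/\alpha}$ there, so (\ref{esti1: l_n}) yields $|l_n(t,x,y)|\le C(K)\,r^{\,n}/\Gamma\!\big(1+n(1-\alpha^{-1})\big)$ with $r:=\kappa_{3}C_{7}C_{35}\Gamma(1-\alpha^{-1})\,b^{(\alpha-1)/\alpha}$; since $1-\alpha^{-1}>0$ the Mittag--Leffler series $\sum_n r^n/\Gamma(1+n(1-\alpha^{-1}))$ converges, and the Weierstrass test gives uniform convergence of $\sum_n l_n$ on $K$. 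As $l_0=p$ is continuous (Proposition~\ref{theorem Consider-the-operator}) and each $l_n$, $n\ge1$, is continuous (dominated convergence, via (\ref{bound: global pb}), (\ref{bound: global gradient pb}), (\ref{esti1: l_n})), $l$ is jointly continuous; summing (\ref{esti1: l_n})--(\ref{esti1: l_n-1}) over $n$ also gives the global bounds $|l(t,x,y)|\le Ce^{Ct}\varrho_{\alpha}^{0}(t,x-y)$ and $|\nabla_x l(t,x,y)|\le Ce^{Ct}\varrho_{\alpha-1}^{0}(t,x-y)$ for a suitable constant $C$, needed below and in Section~6.

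For (ii): from $l_0=p$ and (\ref{defi: l_n}), $l-p=\sum_{n\ge1}\int_0^t\!\int_{\Rd}l_{n-1}(t-s,x,z)\,b(z)\cdot\nabla_z p(s,z,y)\,dz\,ds$, and bounding $|l_{n-1}|$, $|\nabla_z p|$, $|b|$ through (\ref{esti1: l_n}), (\ref{bound: global gradient pb}), $\kappa_3$ and applying (\ref{esti3:rho}) (with $\gamma_1=\alpha+(n-1)(\alpha-1)>0$, $\gamma_2=\alpha-1>0$, $\beta_1=\beta_2=0$) shows each summand in modulus is dominated by the very term bounding $|l_n|$ in (i). Hence the double integrals are summable in modulus, Fubini lets us swap $\sum_{n\ge1}$ and $\int_0^t\!\int_{\Rd}$, and $\sum_{n\ge0}l_n=l$ followed by the substitution $\tau=t-s$ gives (\ref{neweqduhamel}).

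For (iii): fix $\phi\in C_0^\infty(\Rd)$ and put $u_0(t,x):=\int_{\Rd}p(t,x,y)\phi(y)\,dy$. By the results of Section~4 extended to all $t>0$ via the rescaling of Proposition~\ref{theorem Consider-the-operator}, $u_0\in C_b$, $u_0(t,\cdot)\in C_b^1$ with $\nabla u_0(t,x)=\int\nabla_x p(t,x,y)\phi(y)\,dy$ and $\|\nabla u_0(t,\cdot)\|\le c\,t^{-1/\alpha}$, $x\mapsto\mathcal{A}u_0(t,\cdot)(x)$ is bounded continuous, $\partial_t u_0=\mathcal{A}u_0$, and $\|u_0(t,\cdot)-\phi\|\to0$ as $t\downarrow0$. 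Taking $v(s,\cdot):=u_0(t-s,\cdot)$ as a time-dependent test function -- mollifying it and passing to the limit exactly as in the proof of Proposition~\ref{prop: p is the density} (using \cite[Theorem (1.1)]{MR0433614}, the above bounds on $[0,t-\varepsilon]$, bounded convergence, and that $(X,\mathbf{L}^x)$ solves the martingale problem for $\mathcal{L}=\mathcal{A}+b\cdot\nabla$), and using $(\partial_s+\mathcal{L})v(s,\cdot)=b\cdot\nabla u_0(t-s,\cdot)$ (because $\partial_t u_0=\mathcal{A}u_0$) -- one gets that $s\mapsto v(s,X_s)-\int_0^s b(X_r)\cdot\nabla u_0(t-r,X_r)\,dr$ is a $\mathbf{L}^x$-martingale on $[0,t)$; taking expectations, first on $[0,t-\varepsilon]$ and then letting $\varepsilon\downarrow0$ (with $v(t-\varepsilon,X_{t-\varepsilon})\to\phi(X_t)$ boundedly -- for $t$ outside a countable set, then all $t$ by right continuity, as in Proposition~\ref{prop: p is the density} -- and $\int_0^t\|\nabla u_0(t-r,\cdot)\|\,dr<\infty$) yields
\begin{equation}
\mathbf{E}^x[\phi(X_t)]=u_0(t,x)+\int_0^t\mathbf{E}^x\!\big[b(X_r)\cdot\nabla u_0(t-r,X_r)\big]\,dr,\qquad t>0,\ x\in\Rd.\label{eq:plan-intermediate}
\end{equation}
Now (\ref{eq:plan-intermediate}) is a linear Volterra equation for the marginals $\nu_t^x:=\mathbf{L}^x\!\circ X_t^{-1}$ with kernel $\mu\mapsto\big(\int b(z)\cdot\nabla_z p(s,z,y)\,\mu(dz)\big)dy$ of total-variation norm $\le\kappa_3\sup_z\int|\nabla_z p(s,z,y)|\,dy\le c\,e^{cs}s^{-1/\alpha}$ (by (\ref{bound: global gradient pb})), an integrable time-singularity as $\alpha>1$; by (\ref{neweqduhamel}) the measures $l(t,x,\cdot)\,dy$ solve the same equation, with $\int_{\Rd}l(t,x,y)\,dy=1$ (a consequence of $\int_{\Rd}p(t,x,y)\,dy\equiv1$ and (\ref{neweqduhamel})), so both solutions have uniformly (in $t\le T$) bounded total variation. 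Iterating (\ref{eq:plan-intermediate}) $N$ times reproduces $\sum_{n=0}^{N}\int l_n(t,x,y)\phi(y)\,dy$ plus a remainder $\to0$ (the $N$-fold iterate of $s^{-1/\alpha}$ being $O\big(s^{N(1-1/\alpha)}/\Gamma(1+N(1-1/\alpha))\big)$); hence $\mathbf{E}^x[\phi(X_t)]=\int_{\Rd}l(t,x,y)\phi(y)\,dy$ for all $\phi\in C_0^\infty(\Rd)$, and since $C_0^\infty$ is measure-determining, $l(t,x,\cdot)$ is the transition density.

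The main obstacle is (iii), specifically the rigorous derivation of (\ref{eq:plan-intermediate}): one must use the only-$C_b^1$ time-dependent function $v(s,\cdot)=u_0(t-s,\cdot)$ in the martingale problem for $\mathcal{L}$ and pass to the limit despite the singularities of $\partial_s v=-\mathcal{A}u_0(t-s,\cdot)$ near $s=t$ and of $\nabla u_0(t-s,\cdot)$ near $s=0$ -- both handled, as in Proposition~\ref{prop: p is the density}, by working first on $[0,t-\varepsilon]$ and then sending $\varepsilon\downarrow0$. That $b$ is merely bounded measurable is harmless, since it enters (\ref{eq:plan-intermediate}) and the Volterra equation only through $\int_0^t b(X_r)\cdot\nabla u_0(t-r,X_r)\,dr$ with $\nabla u_0$ continuous. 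Alternatively one could run the layer-potential analysis of Propositions~\ref{prop1:layerpotential}--\ref{prop2:layerpotential} with source term $b\cdot\nabla u_0$ to show $u(t,x):=\int l(t,x,y)\phi(y)\,dy$ solves $\partial_t u=\mathcal{L}u$ a.e.\ and verify directly, but the route above is shorter.
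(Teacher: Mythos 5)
Your handling of parts (i)--(ii) --- local uniform convergence of $\sum_n l_n$, joint continuity of $l$, and the Duhamel identity (\ref{neweqduhamel}) --- matches the paper's; where you diverge is in part (iii), the identification of $l$ as the transition density of $\left(X,\left(\mathbf{L}^{x}\right)\right)$. The paper works in the Laplace domain: it introduces the potential operators $R^\lambda$, $S^\lambda$ and $V^\lambda f=\mathbf{E}_{\mathbf{L}^x}\bigl[\int_0^\infty e^{-\lambda t}f(X_t)\,\mathrm{d}t\bigr]$, derives the resolvent identity $V^\lambda g=R^\lambda g(x)+V^\lambda(BR^\lambda g)$ for $g\in C^\beta(\Rd)$ from the (time-independent) martingale problem combined with the H\"older-space solvability of $(\lambda-\mathcal{A})f=g$ from \cite{MR2555009}, obtains the parallel fixed-point relation for $S^\lambda$ from (\ref{neweqduhamel}), sums both resulting series for $\lambda$ large (controlling the tails via Lemma~\ref{Lemma uk_lambda} and (\ref{esti: |l|_n})), and concludes by uniqueness of Laplace transforms. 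You instead derive a time-domain Volterra identity for the marginals, $\mathbf{E}_{\mathbf{L}^x}[\phi(X_t)]=u_0(t,x)+\int_0^t\mathbf{E}_{\mathbf{L}^x}[b(X_r)\cdot\nabla u_0(t-r,X_r)]\,\mathrm{d}r$ with $u_0(t,x)=\int_{\Rd}p(t,x,y)\phi(y)\,\mathrm{d}y$, by testing the martingale problem for $\mathcal{L}$ against the time-dependent function $v(s,\cdot)=u_0(t-s,\cdot)$ --- in effect re-running the mollification/optional-sampling argument of Proposition~\ref{prop: p is the density} with $\mathcal{L}$ in place of $\mathcal{A}$ --- and then iterate this equation, matching $\sum_{n\le N}\int l_n\phi$ term by term and letting the remainder vanish because the kernel's time-singularity $s^{-1/\alpha}$ is integrable when $\alpha>1$. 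Your route avoids the Laplace transform and Bass's nonlocal Schauder estimates; it trades them for the need to invoke the time-dependent $C_b^{1,2}$ extension of the martingale problem for $\mathcal{L}$ (with $b$ only bounded measurable --- standard, but worth stating, since the paper's appeal to \cite{MR0433614} in Proposition~\ref{prop: p is the density} concerns $\mathcal{A}$ alone) and to handle directly the near-$s=t$ and near-$s=0$ singularities that the paper's Laplace-transform device sidesteps. Both routes are valid.
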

\begin{proof}
Let $T>1$ be fixed. By (\ref{esti1: l_n}), we get for $t\in(0,T]$
and $x,y\in\Rd$,
\begin{equation}
\left|l_{n}(t,x,y)\right|\le\frac{C_{35}\left(\kappa_{3}C_{7}C_{35}T^{(1-\alpha^{-1})}\Gamma\left(1-\alpha^{-1}\right)\right)^{n}e^{C_{35}T}}{\Gamma\left(1+n\left(1-\alpha^{-1}\right)\right)}\varrho_{\alpha}^{0}(t,x,y).\label{esti: l_n}
\end{equation}
The local uniform convergence of $\sum_{n=0}^{\infty}l_{n}(t,x,y)$
follows from (\ref{esti: l_n}). It is also easy to see that (\ref{neweqduhamel})
is true. By induction and a similar argument as in \cite[Lemma 14]{MR2283957},
we see that\textcolor{red}{{} }\textcolor{black}{$l_{n}(t,x,y)$ is
jointly continuous in $(t,x,y)\in(0,\infty)\times\Rd\times\Rd$, which,
together with the local uniform convergence, implies the joint continuity
of $l(t,x,y)$. }

For $\lambda>C_{35}\vee\lambda_{0}$ and $f\in\mathcal{B}_{b}(\Rd)$,
define
\[
R^{\lambda}f(x):=\int_{0}^{\infty}\int_{\mathbb{R}^{d}}e^{-\lambda t}p(t,x,y)f(y)\mathrm{d}y\mathrm{d}t,\quad x\in\Rd,
\]
and
\begin{equation}
S^{\lambda}f(x):=\int_{0}^{\infty}\int_{\mathbb{R}^{d}}e^{-\lambda t}l(t,x,y)f(y)\mathrm{d}y\mathrm{d}t,\quad x\in\Rd.\label{defi: S^lambda}
\end{equation}
Note that $S^{\lambda}$ in (\ref{defi: S^lambda}) is well-defined
by (\ref{esti: |l|_n}). If $f$ is bounded measurable, then
\begin{align*}
 & S^{\lambda}f(x)-R^{\lambda}f(x)\\
 & \quad\overset{(\ref{neweqduhamel})}{=}\int_{0}^{\infty}\int_{\mathbb{R}^{d}}e^{-\lambda t}f(y)\left(\int_{0}^{t}\int_{\mathbb{R}^{d}}l(\tau,x,z)b(z)\cdot\nabla_{z}p(t-\tau,z,y)\mathrm{d}z\mathrm{d}\tau\right)\mathrm{d}y\mathrm{d}t.
\end{align*}
Since (\ref{bound: global pb}), (\ref{bound: global gradient pb})
and (\ref{esti: |l|_n}) hold, we can apply Fubini's theorem to get
\begin{align*}
 & S^{\lambda}f(x)-R^{\lambda}f(x)\\
= & \int_{0}^{\infty}\int_{\mathbb{R}^{d}}e^{-\lambda\tau}l(\tau,x,z)\left[b(z)\cdot\nabla_{z}\left(\int_{\tau}^{\infty}\int_{\mathbb{R}^{d}}e^{-\lambda(t-\tau)}p(t-\tau,z,y)f(y)\mathrm{d}y\mathrm{d}t\right)\right]\mathrm{d}z\mathrm{d}\tau,
\end{align*}
namely,
\begin{equation}
S^{\lambda}f-R^{\lambda}f=S^{\lambda}BR^{\lambda}f,\label{eq: S^lambda-R^lambda}
\end{equation}
where $BR^{\lambda}f:=b\cdot\nabla R^{\lambda}f$. Applying (\ref{eq: S^lambda-R^lambda})
$i$ times, we get
\begin{equation}
S^{\lambda}g=\sum_{k=0}^{i}R^{\lambda}(BR^{\lambda})^{k}g+S^{\lambda}(BR^{\lambda})^{i+1}g,\quad\forall\lambda>C_{35}\vee\lambda_{0},\ g\in\mathcal{B}_{b}(\Rd).\label{eq: S^lambda}
\end{equation}
It follows from (\ref{bound: global gradient pb}) that
\begin{equation}
\left\Vert BR^{\lambda}g\right\Vert \le N_{\lambda}\|b\|\cdot\|g\|\le\kappa_{3}N_{\lambda}\|g\|,\quad g\in\mathcal{B}_{b}(\Rd),\label{ineq: BR^lambda}
\end{equation}
where $N_{\lambda}>0$ is a constant with $N_{\lambda}\downarrow0$
as $\lambda\uparrow\infty$. So we can find $\lambda_{1}>C_{35}\vee\lambda_{0}$
such that $N_{\lambda}<1/\kappa_{3}$ for all $\lambda>\lambda_{1}$.
It follows from (\ref{esti: |l|_n}) and (\ref{ineq: BR^lambda})
that $\lim_{i\to\infty}\|S^{\lambda}(BR^{\lambda})^{i+1}g\|=0$ for
all $\lambda>\lambda_{1}$. Therefore,
\begin{equation}
S^{\lambda}g=\sum_{k=0}^{\infty}R^{\lambda}(BR^{\lambda})^{k}g,\quad\forall\lambda>\lambda_{1},\ g\in\mathcal{B}_{b}(\Rd).\label{series eq: S^lambda}
\end{equation}

Next, we show that $l(t,x,y)$ is the transition density of $\left(X,\left(\mathbf{{L}}^{x}\right)\right)$.
Let $x\in\Rd$ be fixed. For $\lambda>0$ and $f\in\mathcal{B}_{b}(\Rd)$,
define
\[
V^{\lambda}f:=\mathbf{{E}}_{\mathbf{{L}}^{x}}\Big[\int_{0}^{\infty}e^{-\lambda t}f(X_{t})\mathrm{d}t\Big].
\]
For $f\in C_{b}^{2}(\Rd)$, we know that
\begin{align*}
 & f(X_{t})-f(X_{0})-\int_{0}^{t}\mathcal{L}f(X_{u})\mathrm{d}u,\quad t\ge0,
\end{align*}
is a $\mathbf{{L}}^{x}$-martingale. So
\begin{equation}
\mathbf{{E}}_{\mathbf{{L}}^{x}}[f(X_{t})]-f(x)=\mathbf{{E}}_{\mathbf{{L}}^{x}}\Big[\int_{0}^{t}\mathcal{L}f(X_{u})\mathrm{d}u\Big].\label{thmunieq0}
\end{equation}
Multiplying both sides of (\ref{thmunieq0}) by $e^{-\lambda t}$,
integrating with respect to $t$ from $0$ to $\infty$ and then applying
Fubini's theorem, we get for $f\in C_{b}^{2}(\Rd)$,
\begin{align}
 & \mathbf{{E}}_{\mathbf{{L}}^{x}}\Big[\int_{0}^{\infty}e^{-\lambda t}f(X_{t})dt\Big]=\frac{1}{\lambda}f(x)+\frac{1}{\lambda}\mathbf{{E}}_{\mathbf{{L}}^{x}}\Big[\int_{0}^{\infty}e^{-\lambda u}\mathcal{L}f(X_{u})\mathrm{d}u\Big].\label{eq: f in C_b^2}
\end{align}
We now claim
\begin{equation}
\lambda V^{\lambda}f=f(x)+V^{\lambda}(\mathcal{L}f),\quad\forall f\in C^{\alpha+\beta}(\Rd),\label{thmunieq101}
\end{equation}
where $0<\beta<2-\alpha$ (see \cite[Sect.~3.1]{MR1406091} for the
definition of the Hölder space $C^{\alpha+\beta}(\Rd)$). Indeed,
if $f\in C^{\alpha+\beta}(\Rd)$, by convolution with mollifiers,
we can find a sequence $(f_{n})\subset C_{b}^{\infty}(\Rd)$ such
that $f_{n}\to f$ in $C^{\alpha+\beta'}(K)$, for any compact set
$K\subset\Rd$ and $0<\beta'<\beta$. Moreover, $\|f_{n}\|_{C^{\alpha+\beta}(\Rd)}\le\|f\|_{C^{\alpha+\beta}(\Rd)}$.
\textcolor{black}{For details the reader is referred to \cite[p.~438]{MR2945756}.}
Noting that for $|h|\le1,$
\begin{align*}
 & |f_{n}(x+h)-f_{n}(x)-\nabla f_{n}(x)\cdot h|=\left|\int_{0}^{1}\left(\nabla f_{n}(x+rh)-\nabla f_{n}(x)\right)\cdot h\mathrm{d}r\right|\\
 & \quad\le c_{1}\|f_{n}\|_{C^{\alpha+\beta}(\Rd)}|h|^{\alpha+\beta-1}\le c_{1}\|f\|_{C^{\alpha+\beta}(\Rd)}|h|^{\alpha+\beta-1},
\end{align*}
by dominated convergence, we see that $\mathcal{L}f_{n}\to\mathcal{L}f$
boundedly and pointwise as $n\to\infty$. Since (\ref{thmunieq101})
is true for $f_{n}$ by (\ref{eq: f in C_b^2}), the passage to the
limit gives (\ref{thmunieq101}).

Given $g\in C^{\beta}(\Rd)$, it follows from \cite[Proposition 7.4]{MR2555009}
and \cite[Theorem 7.2]{MR2555009} that there exists $f\in C^{\alpha+\beta}(\Rd)$
such that $(\lambda-\mathcal{A})f=g$, where $\lambda>0$. For this
$f$, as in (\ref{thmunieq101}) we have $\lambda R^{\lambda}f=f+R^{\lambda}(\mathcal{A}f)$,
which implies $f=R^{\lambda}g$. Substituting this $f$ in (\ref{thmunieq101}),
we obtain
\begin{equation}
V^{\lambda}g=R^{\lambda}g(x)+V^{\lambda}(BR^{\lambda}g),\quad g\in C^{\beta}(\Rd).\label{neweqvn0}
\end{equation}
After a standard approximation procedure, the equality (\ref{neweqvn0})
holds for any $g\in C_{b}(\Rd)$. Then we can use a monotone class
argument to extend (\ref{neweqvn0}) to all $g\in\mathcal{B}_{b}(\Rd)$.

Similarly to (\ref{eq: S^lambda}), we obtain from (\ref{neweqvn0})
that
\begin{equation}
V^{\lambda}g=\sum_{i=0}^{k}R^{\lambda}(BR^{\lambda})^{i}g(x)+V^{\lambda}(BR^{\lambda})^{k+1}g,\quad g\in\mathcal{B}_{b}(\Rd).\label{eqsnforn}
\end{equation}
For $\lambda>\lambda_{1}$, by (\ref{ineq: BR^lambda}) and the definition
of $\lambda_{1}$, we obtain
\begin{equation}
V^{\lambda}g=\sum_{i=0}^{\infty}R^{\lambda}(BR^{\lambda})^{i}g(x),\quad\forall\lambda>\lambda_{1},\ g\in\mathcal{B}_{b}(\Rd).\label{series eq: V^lambda}
\end{equation}
It follows from (\ref{series eq: S^lambda}) and (\ref{series eq: V^lambda})
that for all $\lambda>\lambda_{1}$ and $g\in\mathcal{B}_{b}(\Rd)$,
\[
\mathbf{{E}}_{\mathbf{{L}}^{x}}\Big[\int_{0}^{\infty}e^{-\lambda t}g(X_{t})\mathrm{d}t\Big]=\int_{0}^{\infty}\int_{\mathbb{R}^{d}}e^{-\lambda t}l(t,x,y)g(y)\mathrm{d}y\mathrm{d}t.
\]
Note that for $g\in C_{b}(\mathbb{R}^{d})$, the function $t\mapsto\int_{\mathbb{R}^{d}}l(t,x,y)g(y)\mathrm{d}y$
is bounded continuous on $(0,T]$ for any $T>0$. By the uniqueness
of the Laplace transform, we obtain
\[
\mathbf{{E}}_{\mathbf{{L}}^{x}}[g(X_{t})]=\int_{\mathbb{R}^{d}}l(t,x,y)g(y)\mathrm{d}y,\quad\forall g\in C_{b}(\mathbb{R}^{d}),\ t>0.
\]
This implies that $l(t,x,\cdot)$ is the density function of the law
of $X_{t}$ under the measure $\mathbf{{L}}^{x}.$
\end{proof}
\begin{rem}
\label{rem: l(t,x,y)}Let $l(t,x,y)$ be as in Proposition \ref{Prop: existence of l(t,x,y)}.
By (\ref{esti: lower bound p(t,x,y)}), (\ref{defi: l}) and (\ref{esti1: l_n}),
there exist $t_{0}=t_{0}(d,\alpha,\kappa_{0},\kappa_{1},\kappa_{2},\theta,\kappa_{3})\in(0,1)$
and $C_{37}=C_{37}(d,\alpha,\kappa_{0},\kappa_{1},\kappa_{2},\theta,\kappa_{3})>0$
such that
\[
l(t,x,y)\ge C_{37}t^{-d/\alpha},\quad\forall t\in(0,t_{0}],\ |x\text{\textminus}y|\le t^{1/\alpha}.
\]
\end{rem}

\section{Proof of Theorem \ref{thm: main}}

Finally, we give\emph{ }the proof of our main result.\emph{ }

\emph{Proof of Theorem \ref{thm: main}.} By Propositions \ref{theorem Consider-the-operator}
and \ref{Prop: existence of l(t,x,y)}, we get the existence of a
jointly continuous transition density $l(t,x,y)$ for $\left(X,\left(\mathbf{{L}}^{x}\right)\right)$.
The claimed upper bounds of $l(t,x,y)$ and $|\nabla_{x}l(t,x,y)|$
follow from (\ref{Theorem 2, eq 1}), (\ref{esti1: l_n}), (\ref{esti1: l_n-1})
and Proposition \ref{Prop: existence of l(t,x,y)}.

We now prove the lower bound of $l(t,x,y)$ by following \cite[Sect.~4.4]{chen2015heat}.
Arguing in the same way as in \cite[p.~306-307]{chen2015heat} (see
also the proof of \cite[Prop.~2.3]{MR1918242}), we conclude that
if $A$ and $B$ are bounded Borel subsets of $\Rd$ with $B$ being
closed and having a positive distance from $A$, then
\begin{equation}
\sum_{s\le t}\mathbf{1}_{A}(X_{s-})\mathbf{1}_{B}(X_{s})-\int_{0}^{t}\mathbf{1}_{A}(X_{s})\left(\int_{B}\frac{n(X_{s},y-X_{s})}{|y-X_{s}|^{d+\alpha}}\mathrm{d}y\right)\mathrm{d}s\label{eq: martingale levy system}
\end{equation}
is a $\mathbf{{L}}^{x}$-martingale.

Let $T>0$ be fixed. By Remarks \ref{rem: p(t,x,y)} and \ref{rem: l(t,x,y)},
there exist constants $t_{0}\in(0,1)$ and $c_{1}>0$ such that
\[
l(t,x,y)\ge c_{1}t^{-d/\alpha},\quad\forall t\in(0,t_{0}],\ |x\text{\textminus}y|\le t^{1/\alpha}.
\]
As in (\ref{eq: CK}), $l(t,x,y)$ satisfies also the Chapman-Kolmogorov's
equation. Iterating $[T/t_{0}]+1$ times, we obtain
\[
l(t,x,y)\ge c_{2}t^{-d/\alpha},\quad\forall t\in(0,T],\ |x\text{\textminus}y|\le3c_{3}t^{1/\alpha},
\]
where $c_{2},c_{3}>0$ are constants. By Lemmas \ref{lem: esti exit time of a ball}
and \ref{lem 2: esti exit time of a ball}, there is a constant $\lambda\in(0,1/2)$
such that for all $t\in(0,T)$ and $x\in\Rd$,
\[
\mathbf{{L}}^{x}\left(\tau_{B_{c_{3}t^{1/\alpha}/2}(x)}\le\lambda t\right)\le\frac{1}{2}.
\]

Below, assume $0<t\le T$ and $|x\text{\textminus}y|>3c_{3}t^{1/\alpha}$.
Set $A_{1}:=B_{c_{3}t^{1/\alpha}}(x)$ and $A_{2}:=B_{c_{3}t^{1/\alpha}/2}(y)$.
Let $\overline{A}_{i}$ the closure of $A_{i}$, $i=1,2$. Similarly
to \cite[p.~309, (4.36)]{chen2015heat}, we have
\[
\mathbf{{L}}^{x}\left(X_{\lambda t}\in B_{c_{3}t^{1/\alpha}}(y)\right)\ge\frac{1}{2}\mathbf{{L}}^{x}\left(X_{\lambda t\wedge\tau_{A_{1}}}\in\overline{A}_{2}\right),
\]
where $\tau_{A_{1}}:=\inf\left\{ t\ge0:\ X_{t}\notin A_{1}\right\} $.
Since
\[
\mathbf{1}_{X_{\lambda t\wedge\tau_{A_{1}}}\in\overline{A}_{2}}=\sum_{s\le\lambda t\wedge\tau_{A_{1}}}\mathbf{1}_{\overline{A}_{1}}(X_{s-})\mathbf{1}_{\overline{A}_{2}}(X_{s}),
\]
by (\ref{eq: martingale levy system}) and optional sampling, we have
\begin{align*}
 & \mathbf{{L}}^{x}\left(X_{\lambda t\wedge\tau_{A_{1}}}\in\overline{A}_{2}\right)\\
 & \quad=\mathbf{{E}}_{\mathbf{{L}}^{x}}\left[\int_{0}^{\lambda t\wedge\tau_{A_{1}}}\mathbf{1}_{\overline{A}_{1}}(X_{s})\left(\int_{\overline{A}_{2}}\frac{n(X_{s},y-X_{s})}{|y-X_{s}|^{d+\alpha}}\mathrm{d}y\right)\mathrm{d}s\right]\\
 & \quad=\mathbf{{E}}_{\mathbf{{L}}^{x}}\left[\int_{0}^{\lambda t\wedge\tau_{A_{1}}}\int_{\overline{A}_{2}}\frac{n(X_{s},y-X_{s})}{|y-X_{s}|^{d+\alpha}}\mathrm{d}y\mathrm{d}s\right].
\end{align*}
The rest of the proof is then the same as in \cite[p.~310]{chen2015heat}.
So we get
\[
l(t,x,y)\ge c_{4}t|x-y|^{-d-\alpha},\quad\forall t\in(0,T],\ |x\text{\textminus}y|>3c_{3}t^{1/\alpha}.
\]
 The theorem is proved. \qed

\bibliographystyle{spmpsci}
\addcontentsline{toc}{section}{\refname}

\end{document}